\title{A generalized finite element method for linear thermoelasticity}
\author[Axel M{\aa}lqvist]{Axel M{\aa}lqvist\textsuperscript{1,2}} 
\author[Anna Persson]{Anna Persson\textsuperscript{1}}
\newtheorem{mydef}{Definition}[section]
\newtheorem{mythm}[mydef]{Theorem}
\newtheorem{mylemma}[mydef]{Lemma}
\theoremstyle{remark}
\newtheorem{myremark}[mydef]{Remark}
\newtheorem*{myassump}{Assumptions}
\numberwithin{equation}{section}
\DeclareMathOperator*{\esssup}{ess\,sup}
\DeclareMathOperator*{\essinf}{ess\,inf}
\DeclareMathOperator*{\Span}{span}
\DeclareMathOperator{\diam}{diam}
\DeclareMathOperator{\card}{card}
\newcommand{\ms}{\mathrm{ms}}
\newcommand{\f}{\mathrm{f}}
\newcommand{\ddt}{\bar\partial_t}
\newcommand{\enorm}{\@ifstar\@enorms\@enorm}
\newcommand{\@enorms}[1]{%
  \left|\mkern-1.5mu\left|\mkern-1.5mu\left|
   #1
  \right|\mkern-1.5mu\right|\mkern-1.5mu\right|
}
\newcommand{\@enorm}[2][]{%
  \mathopen{#1|\mkern-1.5mu#1|\mkern-1.5mu#1|}
  #2
  \mathclose{#1|\mkern-1.5mu#1|\mkern-1.5mu#1|}
}
\begin{document}
\begin{abstract}
	We propose and analyze a generalized finite element method designed for linear quasistatic thermoelastic systems with spatial multiscale coefficients. The method is based on the local orthogonal decomposition technique introduced by M{\aa}lqvist and Peterseim (Math. Comp., 83(290): 2583--2603, 2014). We prove convergence of optimal order, independent of the derivatives of the coefficients, in the spatial $H^1$-norm. The theoretical results are confirmed by numerical examples. 
\end{abstract}
\maketitle

\footnotetext[1]{Department of Mathematical Sciences, Chalmers University of Technology and University of Gothenburg, SE-412 96 G\"{o}teborg, Sweden.}
\footnotetext[2]{Supported by the Swedish Research Council and the Swedish Foundation for Strategic Research.}

\section{Introduction}\label{sec:intro}
In many applications the expansion and contraction of a material exposed to temperature changes are of great importance. To model this phenomenon a system consisting of an elasticity equation describing the displacement coupled with an equation for the temperature is used, see, e.g., \cite{Biot56}. The full system consists of a hyperbolic elasticity equation coupled with a parabolic equation for the temperature, see \cite{Dafermos68} for a comprehensive treatment of this formulation. If the inertia effects are negligible, the hyperbolic term in the elasticity equation can be removed. This leads to an elliptic-parabolic system, often referred to as \textit{quasistatic}. This formulation is discussed in, for instance, \cite{Showalter00, Zenisek84-2}. In some settings it is justified to also remove the parabolic term, which leads to an elliptic-elliptic system, see, e.g., \cite{Showalter00, Zenisek84-2}. Since the thermoelastic problem is formally equivalent to the system describing poroelasticity, several papers on this equation are also relevant, see, e.g., \cite{Biot41, Zenisek84-1}.

In this paper we study the quasistatic case. Existence and uniqueness of a solution to this system are discussed in \cite{Showalter00} within the framework of linear degenerate evolution equations in Hilbert spaces. It is also shown that this system is essentially of parabolic type. Existence and uniqueness are also treated in \cite{Zenisek84-2} (only two-dimensional problems) and in \cite{Xu96,Shi92} some results on the thermoelastic contact problem are presented. The classical finite element method for the thermoelastic system is analyzed in \cite{Ern09,Zenisek84-2}, where convergence rates of optimal order are derived for problems with solution in $H^2$ or higher. 

When the elastic medium of interest is strongly heterogeneous, like composite materials, the coefficients are highly varying and oscillating. Commonly, such coefficients are said to have \textit{multiscale} features. For these problems classical polynomial finite elements, as in \cite{Ern09,Zenisek84-2}, fail to approximate the solution well unless the mesh width resolves the data variations. This is due to the fact that a priori bounds of the error depend on (at least) the spatial $H^2$-norm of the solution. Since this norm depends on the derivative of the diffusion coefficient, it is of order $\epsilon^{-1}$ if the coefficient oscillates with frequency $\epsilon^{-1}$. To overcome this difficulty, several numerical methods have been proposed, see for instance \cite{Babuska83, Babuska11, Larson07, Malqvist14, Hughes98}.  

In this paper we suggest a generalized finite element method based on the techniques introduced in \cite{Malqvist14}, often referred to as \textit{local orthogonal decomposition}. This method builds on ideas from the variational multiscale method \cite{Hughes98,Larson07}, where the solution space is split into a coarse and a fine part. The coarse space is modified such that the basis functions contain information from the diffusion coefficient and have support on small patches. With this approach the basis functions have good approximation properties locally. In \cite{Malqvist14} the technique is applied to elliptic problems with an arbitrary positive and bounded diffusion coefficient. One of the main advantages is that no assumptions on scale separation or periodicity of the coefficient are needed. Recently, this technique has been applied to several other problems, for instance, semilinear elliptic equations \cite{HMP14}, boundary value problems \cite{Henning14}, eigenvalue problems \cite{Malqvist13}, linear and semilinear parabolic equations \cite{MP15}, and the linear wave equation \cite{Abdulle14}.

The method we propose in this paper uses generalized finite element spaces similar to those used \cite{Malqvist14} and \cite{HP16}, together with a correction building on the ideas in \cite{Henning14, Larson07}. We prove convergence of optimal order that does not depend on the derivatives of the coefficients. We emphasize that by avoiding these derivatives, the a priori bound does not contain any constant of order $\epsilon^{-1}$, although coefficients are highly varying.

In Section~\ref{sec:problem} we formulate the problem of interest, in Section~\ref{sec:num} we first recall the classical finite element method for thermoelasticity and then we define the new generalized finite element method. In Section~\ref{sec:loc} we perform a localization of the basis functions and in Section~\ref{sec:error} we analyze the error. Finally, in Section~\ref{sec:ex} we present some numerical results.

\section{Problem formulation}\label{sec:problem}
Let $\Omega \subseteq \mathbb{R}^d$, $d=2,3$, be a polygonal/polyhedral domain describing the reference configuration of an elastic body. For a given time $T>0$ we let $u: [0,T]\times \Omega \rightarrow \mathbb{R}^d$ denote the displacement field and $\theta: [0,T]\times \Omega \rightarrow \mathbb{R}$ the temperature. To impose Dirichlet and Neumann boundary conditions, we let $\Gamma^u_D$ and $\Gamma^u_N$ denote two disjoint segments of the boundary such that $\Gamma := \partial \Omega = \Gamma^u_D \cup \Gamma^u_N$. The segments $\Gamma^\theta_D$ and $\Gamma^\theta_N$ are defined similarly. 

We use $(\cdot,\cdot)$ to denote the inner product in $L_2(\Omega)$ and $\|\cdot\|$ for the corresponding norm. Let $H^1(\Omega)$ denote the classical Sobolev space with norm $\|v\|^2_{H^1(\Omega)}=\|v\|^2+\|\nabla v\|^2$ and let $H^{-1}(\Omega)$ denote the dual space to $H^1$. Furthermore, we adopt the notation $L_p([0,T];X)$ for the Bochner space with the norm
\begin{align*}
\|v\|_{L_p([0,T];X)} &= \Big(\int_0^T \|v\|_X^p \, \mathrm{dt} \Big)^{1/p}, \quad 1\leq p<\infty,\\
\|v\|_{L_\infty([0,T];X)} &= \esssup_{0\leq t \leq T} \|v\|_X,
\end{align*}
where $X$ is a Banach space equipped with the norm $\| \cdot\|_X$. The notation $v\in H^1(0,T;X)$ is used to denote $v,\dot{v}\in L_2(0,T;X)$. The dependence on the interval  $[0,T]$ and the domain $\Omega$ is frequently suppressed and we write, for instance, $L_2(L_2)$ for $L_2([0,T];L_2(\Omega))$. We also define the following subspaces of $H^1$
\begin{align*}
V^1 := \{v \in (H^1(\Omega))^d: v=0 \ \text{on} \ \Gamma^u_D\}, \quad V^2 := \{v \in H^1(\Omega): v=0 \ \text{on} \ \Gamma^\theta_D\}.
\end{align*}

Under the assumption that the displacement gradients are small, the (linearized) strain tensor is given by 
\begin{align*}
\varepsilon(u)=\frac{1}{2}(\nabla u + \nabla u^\intercal).
\end{align*}
Assuming further that the material is isotropic, Hooke's law gives the (total) stress tensor, see e.g. \cite{Shi92} and the references therein, 
\begin{align*}
\bar\sigma = 2\mu \varepsilon(u) + \lambda (\nabla \cdot u) I - \alpha \theta I,
\end{align*}
where $I$ is the $d$-dimensional identity matrix, $\alpha$ is the thermal expansion coefficient, and $\mu$ and $\lambda$ are the so called Lam\'{e} coefficients given by
\begin{align*}
\mu = \frac{E}{2(1+\nu)}, \quad \lambda = \frac{E\nu}{(1+\nu)(1-2\nu)},
\end{align*}
where $E$ denotes Young's elastic modulus and $\nu$ denotes Poisson's ratio. The materials of interest are strongly heterogeneous which implies that $\alpha$, $\mu$, and $\lambda$ are rapidly varying in space. 

The linear quasistatic thermoelastic problem takes the form 
\begin{alignat}{2}
	-\nabla \cdot (2\mu \varepsilon(u) + \lambda \nabla \cdot u I -\alpha \theta I) &= f,& \quad &\text{in } (0,T] \times \Omega, \label{thermo1}  \\
	\dot\theta-\nabla\cdot \kappa \nabla \theta + \alpha\nabla \cdot \dot u&=g,& &\text{in } (0,T] \times \Omega, \label{thermo2}\\
	u &= 0,& & \text{in } (0,T]\times\Gamma^u_D,\label{bd1}\\
	\bar\sigma \cdot n &= 0,& & \text{in } (0,T]\times \Gamma^u_N.\label{bd2}\\
	\theta &= 0,& & \text{on } (0,T]\times \Gamma^\theta_D,\label{bd3}\\
	\nabla \theta \cdot n &= 0,& & \text{on } (0,T]\times \Gamma^\theta_N.\label{bd4}\\
	\theta(0) &=\theta_0,& & \text{in } \Omega, \label{initial}
\end{alignat}
where $\kappa$ is the heat conductivity parameter, which is assumed to be rapidly varying in space. 

\begin{myremark}
	For simplicity we have assumed homogeneous boundary data \eqref{bd1}-\eqref{bd4}. However, using techniques similar to the ones used in \cite{Henning14,HP16} the analysis in this paper can be extended to non-homogeneous situations.  
\end{myremark}

\begin{myassump} We make the following assumptions on the data
	\begin{enumerate}[label=(A\arabic*)]
		\item\label{A1} $\kappa \in L_\infty(\Omega,\mathbb{R}^{d \times d})$, symmetric,
		\begin{align*} 
		0<\kappa_1 :=\essinf_{x \in \Omega} \inf_{v \in \mathbb{R}^d\setminus \{0\}} \frac{\kappa(x) v \cdot v}{v \cdot v}, \quad
		\infty > \kappa_2:=\esssup_{x \in \Omega} \sup_{v \in \mathbb{R}^d\setminus \{0\}} \frac{\kappa(x) v \cdot v}{v \cdot v},
		\end{align*}
		\item \label{A2} $\mu,\lambda,\alpha \in L_\infty(\Omega, \mathbb{R})$, and
		\begin{align*} 
		0 < \mu_1 := \essinf_{x \in \Omega} \mu(x) \leq \esssup_{x \in \Omega} \mu(x) =: \mu_2 < \infty.
		\end{align*}
		Similarly, the constants $\lambda_1, \lambda_2, \alpha_1$, and $\alpha_2$ are used to denote the corresponding upper and lower bounds for $\lambda$ and $\alpha$.
		\item \label{A3} $f,\dot f \in L_\infty(L_2), \ddot f \in L_\infty(H^{-1})$, $g\in L_\infty(L_2)$, $\dot g\in L_\infty(H^{-1})$, and $\theta_0 \in V^2$.
	\end{enumerate}
\end{myassump}

To pose a variational form we multiply the equations \eqref{thermo1} and \eqref{thermo2} with test functions from $V^1$ and $V^2$ and using Green's formula together with the boundary conditions \eqref{bd1}-\eqref{bd4} we arrive at the following weak formulation \cite{Ern09}. Find $u(t,\cdot)\in V^1$ and $\theta(t,\cdot) \in V^2$, such that,
\begin{alignat}{2}
	(\sigma(u):\varepsilon(v_1))- (\alpha \theta, \nabla \cdot v_1) &= (f,v_1),& \quad &\forall v_1 \in V^1,\label{weak1}\\
	(\dot\theta,v_2) + (\kappa \nabla \theta, \nabla v_2) + (\alpha \nabla \cdot \dot u,v_2)&= (g,v_2),& &\forall v_2 \in V^2 \label{weak2},
\end{alignat}
and the initial value $\theta(0,\cdot) = \theta_0$ is satisfied. Here we use $\sigma$ to denote the effective stress tensor $\sigma(u):=2\mu \varepsilon(u) + \lambda (\nabla \cdot u) I$ and we use $:$ to denote the Frobenius inner product of matrices. Using Korn's inequality we have the following bounds, see, e.g., \cite{Ciarlet88},
\begin{align}\label{sigma-bounds}
c_\sigma\|v_1\|^2_{H^1}\leq (\sigma(v_1):\epsilon(v_1)) \leq C_\sigma\|v_1\|^2_{H^1}, \quad \forall v_1\in V^1
\end{align}
where $c_\sigma$ (resp. $C_\sigma$) depends on $\mu_1$ (resp. $\mu_2$ and $\lambda_2$). Similarly, there are constants $c_\kappa$ (resp. $C_\kappa$) depending on the bound $\kappa_1$ (resp. $\kappa_2$) such that
\begin{align}\label{kappa-bounds}
c_\kappa\|v_2\|^2_{H^1}\leq (\kappa \nabla v_2, \nabla v_2) \leq C_\kappa\|v_2\|^2_{H^1}, \quad \forall v_2\in V^2.
\end{align}
Furthermore, we use the following notation for the energy norms induced by the bilinear forms 
\begin{align*}
\|v_1\|^2_\sigma := (\sigma(v_1):\varepsilon(v_1)), \ v_1\in V^1, \quad \|v_2\|^2_\kappa := (\kappa \nabla v_2 \nabla v_2), \ v_2\in V^2
\end{align*}

Existence and uniqueness of a solution to \eqref{weak1}-\eqref{weak2} have been proved in \cite{Showalter00,Zenisek84-2}. There are also some papers on the solution to contact problems, see \cite{Shillor93,Xu96}. 
\begin{mythm}	
	Assume that \ref{A1}-\ref{A3} hold and that $\partial \Omega$ is sufficiently smooth. Then there exist $u$ and $\theta$ such that $u\in L_2(V^1)$, $\nabla \cdot \dot u\in L_2(H^{-1})$, $\theta\in L_2(V^2)$, and $\dot \theta \in L_2(H^{-1})$ satisfying \eqref{weak1}-\eqref{weak2} and the initial condition $\theta(0,\cdot) = \theta_0$.
\end{mythm}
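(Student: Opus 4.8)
The plan is to exploit the degenerate structure of \eqref{weak1}--\eqref{weak2}: since no time derivative of $u$ appears in \eqref{weak1}, the displacement is slaved to the temperature through a stationary elliptic problem, and eliminating it reduces the coupled system to a single parabolic equation for $\theta$, to which standard existence theory applies. Concretely, for fixed $t$ the form $(\sigma(\cdot):\varepsilon(\cdot))$ is coercive and bounded on $V^1$ by \eqref{sigma-bounds}, so by Lax--Milgram there are bounded linear solution operators producing $u_\f\in V^1$ from $(\sigma(u_\f):\varepsilon(v_1))=(f,v_1)$ and $B\phi\in V^1$ from $(\sigma(B\phi):\varepsilon(v_1))=(\alpha\phi,\nabla\cdot v_1)$ for $\phi\in L_2(\Omega)$. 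Equation \eqref{weak1} then reads $u=u_\f+B\theta$, and since $B$ and $f\mapsto u_\f$ are time-independent, differentiating in time gives $\dot u=\dot u_\f+B\dot\theta$, where $\dot u_\f$ solves the same elliptic problem with right-hand side $\dot f$.

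Substituting $\dot u=\dot u_\f+B\dot\theta$ into \eqref{weak2} yields the reduced problem
\begin{align*}
m(\dot\theta,v_2)+(\kappa\nabla\theta,\nabla v_2)=(g,v_2)-(\alpha\nabla\cdot\dot u_\f,v_2),\quad\forall v_2\in V^2,
\end{align*}
with the modified mass form $m(\phi,\psi):=(\phi,\psi)+(\alpha\nabla\cdot B\phi,\psi)$. The key step is to check that $m$ is a symmetric, bounded, and coercive form on $L_2(\Omega)$. Testing the defining relation for $B\psi$ with $v_1=B\phi$, and using symmetry of the Frobenius pairing $\sigma(\cdot):\varepsilon(\cdot)$, gives $(\alpha\nabla\cdot B\phi,\psi)=(\sigma(B\phi):\varepsilon(B\psi))=(\alpha\nabla\cdot B\psi,\phi)$, so $m$ is symmetric; in particular $m(\phi,\phi)=\|\phi\|^2+\|B\phi\|_\sigma^2\geq\|\phi\|^2$, giving coercivity, while the a priori bound $\|B\phi\|_\sigma\leq C\|\phi\|$ (obtained from $\|B\phi\|_\sigma^2=(\alpha\phi,\nabla\cdot B\phi)$ and \eqref{sigma-bounds}) gives continuity. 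Thus $m$ is equivalent to the $L_2$-inner product and the reduced problem is a genuine, non-degenerate parabolic equation, in agreement with the essentially parabolic character noted in \cite{Showalter00}.

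With this reformulation, existence follows from the standard theory for linear parabolic problems, which I would realize through a Faedo--Galerkin argument. I would solve the finite-dimensional analogue of the reduced equation on an increasing sequence of subspaces of $V^2$ (the coercive form $m$ guarantees the resulting ODE system is well posed), derive uniform a priori bounds by testing with $\theta_n$ --- which yields $\|\theta_n\|_{L_\infty(L_2)}^2+\|\theta_n\|_{L_2(V^2)}^2\leq C$ after absorbing the coupling term and applying Gr\"onwall's inequality --- and extract weakly convergent subsequences in $L_2(V^2)$. The data required to close these estimates, namely $f,\dot f\in L_\infty(L_2)$, $g\in L_\infty(L_2)$, and $\theta_0\in V^2$, are exactly those supplied by \ref{A3}; note that $\dot f$ enters through $\dot u_\f\in L_\infty(V^1)$ and hence through the right-hand side $(\alpha\nabla\cdot\dot u_\f,v_2)$. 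Passing to the limit produces $\theta\in L_2(V^2)$ with $\dot\theta\in L_2(H^{-1})$ and $\theta(0)=\theta_0$; setting $u=u_\f+B\theta$ recovers $u\in L_2(V^1)$, and reading \eqref{weak2} as an identity in the dual of $V^2$ bounds $\alpha\nabla\cdot\dot u$, and therefore $\nabla\cdot\dot u$, in $L_2(H^{-1})$.

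The main obstacle is the coupling term $(\alpha\nabla\cdot\dot u,v_2)$, which carries the time derivative of $u$ into the temperature equation even though $u$ itself solves a stationary problem; the whole argument hinges on recognizing that, after elimination, this term contributes the nonnegative symmetric perturbation $(\alpha\nabla\cdot B\dot\theta,v_2)$ to the mass form rather than an indefinite lower-order term. A secondary technical point is the treatment of $\dot u_\f$ in the a priori estimates, which is why differentiability of $f$ in time is assumed. Although only existence is asserted above, uniqueness would follow by linearity from the same energy identity applied to a solution with vanishing data: testing \eqref{weak2} with $\theta$ and \eqref{weak1} with $\dot u$ gives $\tfrac{1}{2}\tfrac{d}{dt}(\|\theta\|^2+\|u\|_\sigma^2)+\|\theta\|_\kappa^2=0$, whence $\theta\equiv 0$ and $u\equiv 0$.
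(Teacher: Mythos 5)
You are proving a statement the paper itself never proves: Theorem 2.1 is quoted from the literature, with existence and uniqueness attributed to \cite{Showalter00} and \cite{Zenisek84-2}, so there is no internal proof to compare against. Your elimination argument --- solve \eqref{weak1} for $u = u_{\mathrm{f}} + B\theta$ by Lax--Milgram, substitute into \eqref{weak2}, and observe that the coupling term contributes the symmetric, nonnegative perturbation $(\alpha\nabla\cdot B\dot\theta,v_2)$ to the mass form --- is precisely the mechanism by which the system is ``essentially of parabolic type'' in Showalter's framework of degenerate evolution equations, so in spirit you are reconstructing the cited proof rather than departing from it. Your key computations are correct: symmetry of $(\alpha\nabla\cdot B\phi,\psi)$ follows by testing the two defining relations for $B\phi$ and $B\psi$ against each other, $m(\phi,\phi)=\|\phi\|^2+\|B\phi\|_\sigma^2$ gives coercivity, and $\|B\phi\|_\sigma\leq C\|\phi\|$ gives boundedness, so $m$ is an equivalent inner product on $L_2$ and the Faedo--Galerkin machinery applies.

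The one step that fails as written is the recovery of $\nabla\cdot\dot u\in L_2(H^{-1})$: you bound $\alpha\nabla\cdot\dot u$ in a negative-order norm and conclude ``therefore $\nabla\cdot\dot u$'' is bounded there too. Under \ref{A2} the coefficient $\alpha$ is only $L_\infty$ (this is the entire point of the paper), and multiplication by $\alpha^{-1}$ is \emph{not} a bounded operation on $H^{-1}$; one cannot divide by a rough coefficient inside a dual space. There is a related soft spot earlier: the energy estimate you actually describe (testing with $\theta_n$) controls the time derivative only through the mass form, i.e. it controls $M\dot\theta:=\dot\theta+\alpha\nabla\cdot B\dot\theta$ in the dual of $V^2$, and $M^{-1}$ does not act on that dual because $\alpha\nabla\cdot B$ does not map $V^2$ into $V^2$ when the coefficients are rough; so $\dot\theta\in L_2(H^{-1})$ does not yet follow. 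Both problems are repaired by one additional, standard Galerkin estimate: test with $\dot\theta_n$, use $m(\dot\theta_n,\dot\theta_n)\geq\|\dot\theta_n\|^2$ and $(\kappa\nabla\theta_n,\nabla\dot\theta_n)=\tfrac12\tfrac{d}{dt}\|\theta_n\|_\kappa^2$, and absorb the right-hand side. This yields $\dot\theta\in L_2(L_2)$ and $\theta\in L_\infty(V^2)$, whence $\dot u=\dot u_{\mathrm{f}}+B\dot\theta\in L_2(V^1)$ and $\nabla\cdot\dot u\in L_2(L_2)\subset L_2(H^{-1})$ with no division by $\alpha$ at all. Note that this second estimate is exactly what consumes the hypothesis $\theta_0\in V^2$ of \ref{A3}: you list that hypothesis among the data needed to close your estimates, but the estimate you describe uses only $\theta_0\in L_2$, which is the telltale sign that a stronger estimate is still missing.
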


\begin{myremark}
	We remark that the equations \eqref{thermo1}-\eqref{initial} also describe a poroelastic system. In this case $\theta$ denotes the fluid pressure, $\kappa$ the permeability and viscosity of the fluid. 
\end{myremark}

\section{Numerical approximation}\label{sec:num}
In this section is we first recall some properties of the classical finite element method for \eqref{weak1}-\eqref{weak2}. In subsection~\ref{subsec:gfem} we propose a new numerical method built on the ideas from \cite{Malqvist14}. The localization of this method is treated in Section~\ref{sec:loc}.

\subsection{Classical finite element}
First, we need to define appropriate finite element spaces. For this purpose we let \{$\mathcal{T}_h\}_{h>0}$ be a family of shape regular triangulations of $\Omega$ with the mesh size $h_K:= \diam(K)$, for $K\in\mathcal{T}_h$. Furthermore, we denote the largest diameter in the triangulation by $h:=\max_{K\in \mathcal{T}_h} h_K$. We now define the classical piecewise affine finite element spaces
\begin{align*}
V^1_h &= \{v\in (C(\bar{\Omega}))^d: v=0 \text{ on } \Gamma^u_D, v|_K \text{ is a polynomial of degree} \leq 1, \forall K \in \mathcal{T}_h\}, \\
V^2_h &= \{v\in C(\bar{\Omega}): v=0 \text{ on } \Gamma^\theta_D, v|_K \text{ is a polynomial of degree} \leq 1, \forall K \in \mathcal{T}_h\}.
\end{align*}

For the discretization in time we consider, for simplicity, a uniform time step $\tau$ such that $t_n = n\tau$ for $n \in \{0,1,...,N\}$ and $N\tau=T$. 

\begin{myremark}
	The classical linear elasticity equation can in some cases suffer from locking effects when using continuous piecewise linear polynomials in both spaces (P1-P1 elements). These typically occur if $\nu$ is close to $1/2$ (Poisson locking) or if the thickness of the domain is very small (shear locking). In the coupled time-dependent problem locking can occur if $\dot \theta$ is neglected in \eqref{thermo2} and P1-P1 elements are used. The locking produces artificial oscillations in the numerical approximation of the temperature (or pressure) for early time steps. However, it shall be noted that in the case when $\dot \theta$ is \textit{not} neglected, this locking effect does not occur, see \cite{Phillips08}. Thus, we consider a P1-P1 discretization in this paper. 
\end{myremark}

The classical finite element method with a backward Euler scheme in time reads; for $n\in \{1,...,N\}$ find $u^n_h\in V^1_h$ and $\theta^n_h \in V^2_h$, such that
\begin{alignat}{2}
(\sigma(u^n_h):\varepsilon(v_1))- (\alpha \theta^n_h, \nabla \cdot v_1) &= (f^n,v_1),& \quad &\forall v_1 \in V^1_h,\label{fem1}\\
(\ddt\theta^n_h,v_2) + (\kappa \nabla \theta^n_h, \nabla v_2) + (\alpha \nabla \cdot \ddt u^n_h,v_2)&= (g^n,v_2),& &\forall v_2 \in V^2_h \label{fem2},
\end{alignat}
where $\ddt \theta^n_h:= (\theta^n_h-\theta^{n-1}_h)/\tau$ and similarly for $\ddt u^n_h$. The right hand sides are evaluated at time $t_n$, that is, $f^n := f(t_n)$ and $g^n := g(t_n)$. Given initial data $u^0_h$ and $\theta^0_h$ the system \eqref{fem1}-\eqref{fem2} is well posed \cite{Ern09}. We assume that $\theta^0_h\in V^1_h$ is a suitable approximation of $\theta_0$. For $u^0_h$ we note that $u(0)$ is uniquely determined by \eqref{weak1} at $t=0$, that is, $u(0)$ fulfills the equation
\begin{align*}
(\sigma(u(0)):\varepsilon(v_1)) - (\alpha \theta^0, \nabla\cdot v_1)=(f^0,v_1), \quad \forall v_1 \in V^1,
\end{align*} 
and we thus define $u^0_h \in V^1_h$ to be the solution to
\begin{align}\label{u0}
(\sigma(u^0_h):\varepsilon(v_1)) - (\alpha \theta^0_h, \nabla\cdot v_1)=(f^0,v_1), \quad \forall v_1 \in V^1_h.
\end{align}

The following theorem is a consequence of \cite[Theorem 3.1]{Ern09}. The convergence rate is optimal for the two first norms. However, it is not optimal for the $L_2$-norm $\|\theta^n-\theta^n_h\|$. In \cite{Ern09} this is avoided by using second order continuous piecewise polynomials for the displacement (P2-P1 elements). It is, however, noted that the problem is still stable using P1-P1 elements. In this paper we use P1-P1 elements and derive error bounds in the $L_\infty(H^1)$-norm, of optimal order, for both the displacement and the temperature. 
\begin{mythm}\label{femconv}
	Let $(u,\theta)$ be the solution to \eqref{weak1}-\eqref{weak2} and $\{(u^n_h,\theta^n_h)\}_{n=1}^N$ be the solution to \eqref{fem1}-\eqref{fem2}. Then for $n \in \{1,...,N\}$
	\begin{align*}
	\|u^n-u^n_h\|_{H^1} + \Big(\sum_{m=1}^n\tau\|\theta^m-\theta^m_h\|^2_{H^1}\Big)^{1/2} + \|\theta^n-\theta^n_h\| \leq C_{\epsilon^{-1}}(h+\tau),
	\end{align*}
	where $C_{\epsilon^{-1}}$ is of order $\epsilon^{-1}$ if the material varies on a scale of size $\epsilon$.
\end{mythm}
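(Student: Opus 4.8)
The plan is to follow the standard energy-method error analysis for coupled elliptic--parabolic systems, splitting each error through the natural elliptic (Ritz) projections associated with the two bilinear forms. I introduce $R^\sigma_h\colon V^1\to V^1_h$ and $R^\kappa_h\colon V^2\to V^2_h$ defined by $(\sigma(R^\sigma_h w - w):\varepsilon(v_1))=0$ and $(\kappa\nabla(R^\kappa_h w - w),\nabla v_2)=0$ for all $v_1\in V^1_h$, $v_2\in V^2_h$. Writing $u^n-u^n_h=\rho^n_u+\eta^n_u$ with $\rho^n_u:=u^n-R^\sigma_h u^n$, $\eta^n_u:=R^\sigma_h u^n-u^n_h$, and analogously $\theta^n-\theta^n_h=\rho^n_\theta+\eta^n_\theta$, the projection parts $\rho$ are controlled by the standard approximation estimates $\|\rho^n_u\|_{H^1}\le Ch\|u^n\|_{H^2}$ and $\|\rho^n_\theta\|_{H^1}\le Ch\|\theta^n\|_{H^2}$, with analogous bounds for the discrete time differences $\ddt\rho^n_u$, $\ddt\rho^n_\theta$ obtained from the linearity of the projections. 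Since the target rate in all three norms is only $O(h+\tau)$, the crude bound $\|\rho^n_\theta\|\le\|\rho^n_\theta\|_{H^1}$ suffices for the $L_2$ part and no duality argument is needed, consistent with the suboptimal $L_2$-rate noted above. It is here that the constant becomes of order $\epsilon^{-1}$: the $H^2$-seminorms of $u$, $\theta$ and their time derivatives involve derivatives of the oscillating coefficients $\mu,\lambda,\kappa,\alpha$ and hence scale like $\epsilon^{-1}$. The task then reduces to bounding the discrete parts $\eta_u,\eta_\theta$, which lie in the finite element spaces.

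Next I would derive the two error equations. Subtracting \eqref{fem1} from \eqref{weak1} at $t_n$ and using the $\sigma$-orthogonality of $\rho^n_u$ gives the purely algebraic relation $(\sigma(\eta^n_u):\varepsilon(v_1))=(\alpha(\rho^n_\theta+\eta^n_\theta),\nabla\cdot v_1)$, whence, by \eqref{sigma-bounds}, $\|\eta^n_u\|_{H^1}\le C(\|\rho^n_\theta\|+\|\eta^n_\theta\|)$. For the temperature I subtract \eqref{fem2} from \eqref{weak2}, insert $\dot\theta^n=\ddt\theta^n+(\dot\theta^n-\ddt\theta^n)$ and $\dot u^n=\ddt u^n+(\dot u^n-\ddt u^n)$, and use the $\kappa$-orthogonality of $\rho^n_\theta$, obtaining a discrete parabolic error equation for $\eta^n_\theta$ whose right-hand side $F^n$ collects the backward-Euler consistency terms $\dot\theta^n-\ddt\theta^n=O(\tau)$, $\dot u^n-\ddt u^n=O(\tau)$ (controlled by $\ddot\theta$ and $\ddot u$, the latter obtained by differentiating the elliptic equation twice in time and invoking $\ddot f\in L_\infty(H^{-1})$ from \ref{A3}) together with the projection-rate terms $\ddt\rho^n_\theta$ and $\nabla\cdot\ddt\rho^n_u$.

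The crux, and the step I expect to be the main obstacle, is the coupling term $(\alpha\nabla\cdot\ddt\eta^n_u,\eta^n_\theta)$ that appears when I test the temperature error equation with $v_2=\eta^n_\theta$, since $\ddt\eta^n_u$ cannot be bounded directly. The trick is to test the elasticity error equation instead with $v_1=\ddt\eta^n_u\in V^1_h$, which yields $(\alpha\eta^n_\theta,\nabla\cdot\ddt\eta^n_u)=(\sigma(\eta^n_u):\varepsilon(\ddt\eta^n_u))-(\alpha\rho^n_\theta,\nabla\cdot\ddt\eta^n_u)$. Using the two discrete monotonicity identities $(\ddt\eta^n_\theta,\eta^n_\theta)\ge\frac{1}{2\tau}(\|\eta^n_\theta\|^2-\|\eta^{n-1}_\theta\|^2)$ and $(\sigma(\eta^n_u):\varepsilon(\ddt\eta^n_u))\ge\frac{1}{2\tau}(\|\eta^n_u\|^2_\sigma-\|\eta^{n-1}_u\|^2_\sigma)$, the coupling term is converted into a \emph{second} telescoping energy difference in $\|\eta^n_u\|_\sigma$, leaving only the remainder $(\alpha\rho^n_\theta,\nabla\cdot\ddt\eta^n_u)$.

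Finally I multiply by $2\tau$, sum from $n=1$ to $m$, and observe that both $\|\eta^m_\theta\|^2$ and $\|\eta^m_u\|^2_\sigma$ survive on the left with positive signs, alongside $2\tau\sum\|\eta^n_\theta\|^2_\kappa$. The troublesome remainder $2\tau\sum(\alpha\rho^n_\theta,\nabla\cdot\ddt\eta^n_u)$ is handled by summation by parts, which moves the difference operator onto $\rho_\theta$ and produces a boundary term $(\alpha\rho^m_\theta,\nabla\cdot\eta^m_u)$, absorbed into $\|\eta^m_u\|^2_\sigma$ by a Young inequality and \eqref{sigma-bounds}, plus a sum involving $\ddt\rho_\theta$ that carries the optimal rate. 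Substituting the algebraic bound $\|\eta^n_u\|_{H^1}\le C(\|\rho^n_\theta\|+\|\eta^n_\theta\|)$ and applying a discrete Gr\"onwall inequality closes the estimate; the initial data are handled by choosing $\theta^0_h=R^\kappa_h\theta_0$ (so $\eta^0_\theta=0$) and noting that \eqref{u0} forces $\|\eta^0_u\|_{H^1}\le C\|\rho^0_\theta\|$. Collecting the $\eta$-bounds with the projection estimates for $\rho$ and using once more \eqref{sigma-bounds}--\eqref{kappa-bounds} yields the three stated norms, each of order $h+\tau$ with a constant $C_{\epsilon^{-1}}$ of order $\epsilon^{-1}$ inherited from the $H^2$-regularity used in the projection estimates.
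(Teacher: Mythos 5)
Your proposal is correct, but it takes a genuinely different route from the paper for the simple reason that the paper contains no proof of this theorem at all: Theorem~\ref{femconv} is stated as a consequence of \cite[Theorem 3.1]{Ern09}, and the authors never argue it themselves. What you have written is in effect a self-contained reconstruction of the argument behind that citation, adapted to P1-P1 elements and to the three norms stated here, and its mechanics are sound: the splitting through the two uncoupled Ritz projections, the purely algebraic bound $\|\eta^n_u\|_{H^1}\le C(\|\rho^n_\theta\|+\|\eta^n_\theta\|)$ coming from the elasticity error equation, the cancellation of the coupling term by testing that equation with $v_1=\ddt\eta^n_u$ while testing the parabolic error equation with $v_2=\eta^n_\theta$ (this is precisely the device the paper itself uses elsewhere, e.g.\ in the proofs of Theorem~\ref{femreg} and Lemmas~\ref{gfemerror1}--\ref{gfemerror2}, where the coupling terms cancel and $(\sigma(\eta^n_u):\varepsilon(\ddt\eta^n_u))$ telescopes via the identity \eqref{symmetry}), the summation by parts in time to tame $\sum_n\tau(\alpha\rho^n_\theta,\nabla\cdot\ddt\eta^n_u)$, and the observation that the crude bound $\|\rho^n_\theta\|\le\|\rho^n_\theta\|_{H^1}$ suffices because the claimed $L_2$ rate is only $O(h+\tau)$. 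What your route buys is transparency: it exhibits exactly where the $\epsilon^{-1}$ enters (the $H^2$ norms of $u$, $\theta$ and their time derivatives in the projection estimates) and why P1-P1 cannot do better in $L_2$. What the paper's route buys is brevity, at the cost of hiding the structure of the argument.

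Two points must be made explicit for your sketch to stand as a complete proof. First, you need regularity of the exact solution that is nowhere stated in the theorem: spatial $H^2$ regularity of $u,\theta,\dot u,\dot\theta$ and temporal control of $\ddot u,\ddot\theta$ for the backward-Euler consistency terms. Your remark that $\ddot u$ is obtained by differentiating the elliptic equation twice in time itself presupposes control of $\ddot\theta$, which must in turn be extracted from the time-differentiated parabolic equation under \ref{A3} and suitable compatibility of the data; these hypotheses are tacitly bundled into the constant $C_{\epsilon^{-1}}$ both in your argument and in the paper's citation. Second, the paper leaves $\theta^0_h$ as an unspecified ``suitable approximation,'' so your choice $\theta^0_h=R^\kappa_h\theta_0$ (giving $\eta^0_\theta=0$) is one admissible realization rather than the general case; for any $O(h)$-accurate initial approximation you would simply carry $\|\eta^0_\theta\|=O(h)$ through the telescoping sums, which changes nothing essential.
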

Note that the constant involved in this error bound contains derivatives of the coefficients. Hence, convergence only takes place when the mesh size $h$ is sufficiently small ($h<\epsilon$). Throughout this paper, it is assumed that $h$ is small enough and $V^1_h$ and $V^2_h$ are referred to as reference spaces for the solution. Similarly, $u^n_h$ and $\theta^n_h$ are referred to as reference solutions. In Section~\ref{sec:error} this solution is compared with the generalized finite element solution. We emphasize that the generalized finite element solution is computed in spaces of lower dimension and hence not as computationally expensive.

In the following theorem we prove some regularity results for the finite element solution.
\begin{mythm}\label{femreg}
	Let $\{u^n_h\}_{n=1}^N$ and $\{\theta^n_h\}_{n=1}^N$ be the solution to \eqref{fem1}-\eqref{fem2}. Then the following bound holds
	\begin{align}\label{femreg1}
	\Big(\sum_{j=1}^n\tau \|\ddt u^j_h\|^2_{H^1}\Big)^{1/2} &+\Big(\sum_{j=1}^n\tau \|\ddt \theta^j_h\|^2\Big)^{1/2} +  \|\theta^n_h\|_{H^1} \\&\leq C(\|g\|_{L_\infty(L_2)} + \|\dot f\|_{L_\infty(H^{-1})} + \|\theta^0_h\|_{H^1})\notag
	\end{align}
	If $\theta^0_h=0$, then for $n \in \{1,...,N\}$
	\begin{align}\label{femreg2}
	\|\ddt u^n_h\|_{H^1} &+ \|\ddt \theta^n_h\| +  \Big(\sum_{j=1}^n\tau\|\ddt \theta^j_h\|^2_{H^1}\Big)^{1/2}
	\\& \leq C\big(\|g\|_{L_\infty(L_2)} + \|\dot g \|_{L_\infty(H^{-1})} + \|\dot f\|_{L_\infty(H^{-1})} + \|\ddot f\|_{L_\infty(H^{-1})}\big).\notag
	\end{align}
	If $f=0$ and $g=0$, then for $n \in \{1,...,N\}$
	\begin{align}\label{femreg3}
	\|\ddt u^n_h\|_{H^1}  + \|\ddt \theta^n_h\| + t_n^{1/2}\|\ddt \theta^n_h\|_{H^1} \leq Ct_n^{-1/2}\|\theta^0_h\|_{H^1}.
	\end{align}
\end{mythm}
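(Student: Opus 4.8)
The plan is to establish the three bounds in the order stated, using \eqref{femreg1} as an ingredient for \eqref{femreg2} and \eqref{femreg3}. The common mechanism is to difference \eqref{fem1}--\eqref{fem2} in time and to test with discrete time derivatives $\ddt\theta^n_h$ and $\ddt u^n_h$ (and their further differences), choosing the test functions so that the two $\alpha$-coupling terms appearing in the elasticity and temperature equations are equal and opposite and therefore cancel when the tested identities are added. A structural fact I will lean on is that \eqref{fem1} is elliptic in the displacement: differencing it gives, for $n\ge1$,
\[
(\sigma(\ddt u^n_h):\varepsilon(v_1)) - (\alpha\,\ddt\theta^n_h,\nabla\cdot v_1) = (\ddt f^n,v_1),\qquad v_1\in V^1_h,
\]
so that testing with $v_1=\ddt u^n_h$ and using \eqref{sigma-bounds} bounds $\|\ddt u^n_h\|_{H^1}$ directly by $\|\ddt\theta^n_h\|$ and $\|\ddt f^n\|_{H^{-1}}$; thus the displacement regularity is slaved to that of the temperature and I only need to track $\theta_h$ carefully. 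Repeatedly I will use that difference quotients of the data are controlled by the corresponding time-derivative norms, e.g. $\|\ddt f^n\|_{H^{-1}}\le\|\dot f\|_{L_\infty(H^{-1})}$ and $\|\ddt^2 f^n\|_{H^{-1}}\le\|\ddot f\|_{L_\infty(H^{-1})}$.

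For \eqref{femreg1} I would test \eqref{fem2} with $\ddt\theta^n_h$ and the differenced elasticity equation above with $\ddt u^n_h$. Adding the identities cancels the coupling and leaves $\|\ddt\theta^n_h\|^2+\|\ddt u^n_h\|_\sigma^2$ together with $(\kappa\nabla\theta^n_h,\nabla\ddt\theta^n_h)$, which telescopes through $(\kappa\nabla a,\nabla(a-b))\ge\tfrac12(\|a\|_\kappa^2-\|b\|_\kappa^2)$. Multiplying by $\tau$, summing over $n$, and absorbing the right-hand sides $(g^n,\ddt\theta^n_h)$ and $(\ddt f^n,\ddt u^n_h)$ by Young's inequality (the $H^{-1}$-pairing against $\ddt u^n_h$ being controlled by $\|\ddt u^n_h\|_\sigma$ via \eqref{sigma-bounds}) gives \eqref{femreg1}.

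For \eqref{femreg2} I would run the same scheme one difference higher: difference \eqref{fem2} once and test with $\ddt\theta^n_h$, and difference \eqref{fem1} once (the equation above) and test it with $\ddt(\ddt u^n_h)$. Adding, the couplings cancel, the term $(\ddt(\ddt\theta^n_h),\ddt\theta^n_h)$ telescopes the pointwise norm $\|\ddt\theta^n_h\|^2$, the diffusion term yields the coercive $\|\ddt\theta^n_h\|_\kappa^2$, and the elastic contribution $(\sigma(\ddt u^n_h):\varepsilon(\ddt(\ddt u^n_h)))$ telescopes the pointwise norm $\|\ddt u^n_h\|_\sigma^2$ --- precisely the three quantities on the left of \eqref{femreg2}. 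The first genuine difficulty is the right-hand term $\sum_n\tau(\ddt f^n,\ddt(\ddt u^n_h))$, which cannot be absorbed since I do not control $\ddt(\ddt u^n_h)$ in $\ell^2$; I would handle it by summation by parts in time, moving one difference onto $f$ (this produces $\ddt^2 f$, which is exactly why $\ddot f\in L_\infty(H^{-1})$ is assumed) and leaving boundary terms, after which a discrete Gr\"onwall inequality closes the estimate. The second difficulty is that the second difference is undefined at $n=1$; I would treat this step separately, noting that with $\theta^0_h=0$ the coupling term in \eqref{fem2} at $n=1$ is exactly $\|\ddt u^1_h\|_\sigma^2$, which furnishes the base of the induction.

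For \eqref{femreg3} I would exploit that, when $f=g=0$, the identities underlying \eqref{femreg2} have vanishing right-hand sides, so $E^n:=\|\ddt\theta^n_h\|^2+\|\ddt u^n_h\|_\sigma^2$ and $F^n:=\|\ddt\theta^n_h\|_\kappa^2$ are non-increasing in $n$ (with difference inequalities $E^n-E^{n-1}\le-2\tau\|\ddt\theta^n_h\|_\kappa^2$ and $F^n-F^{n-1}\le0$). Multiplying these by the weights $t_n$ and $t_n^2$ respectively and summing produces the target quantities $t_nE^n$ and $t_n^2F^n$ on the left, plus lower-order sums on the right; the latter (such as $\tau\sum_j E^j$ and $\tau\sum_j\|\ddt\theta^j_h\|_\kappa^2$) are exactly the unweighted sums already bounded by $C\|\theta^0_h\|_{H^1}^2$ in \eqref{femreg1}, while the truly initial contributions $\tau E^1$ and $\tau^2F^1=\|\theta^1_h-\theta^0_h\|_\kappa^2$ are controlled by \eqref{femreg1} together with the one-line stability bound $\|\theta^1_h\|_\kappa\le\|\theta^0_h\|_\kappa$ (from testing \eqref{fem2} at $n=1$). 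This yields $t_nE^n\le C\|\theta^0_h\|_{H^1}^2$ and $t_n^2F^n\le C\|\theta^0_h\|_{H^1}^2$, which are the claimed bounds. I expect the main obstacle throughout to be the coupling bookkeeping: because the quasistatic elasticity equation carries no time derivative of its own, all displacement regularity must be extracted by differencing it and pairing it against the temperature equation so the $\alpha$-coupling cancels, and this must be reconciled with the time summation-by-parts step (which fixes the regularity demanded of $f$) and with the weighted-sum and initial-step accounting in the smoothing estimate.
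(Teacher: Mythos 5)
Your treatment of \eqref{femreg1} and \eqref{femreg2} is essentially the paper's own proof: difference the equations in time, test with $(\ddt u^n_h,\ddt\theta^n_h)$ (respectively $(\ddt^2 u^n_h,\ddt\theta^n_h)$) so that the $\alpha$-couplings cancel, telescope the symmetric forms, handle $\sum_j\tau(\ddt f^j,\ddt^2 u^j_h)$ by summation by parts in time (producing $\ddt^2 f$, hence the hypothesis on $\ddot f$), and treat $n=1$ separately using $\theta^0_h=0$. Closing with a discrete Gr\"onwall inequality instead of absorbing $\max_j\|\ddt u^j_h\|_{H^1}$ into the left-hand side, as the paper does, is a legitimate variant. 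Your two difference inequalities for \eqref{femreg3}, $E^n-E^{n-1}\le -2\tau\|\ddt\theta^n_h\|^2_\kappa$ and $F^n-F^{n-1}\le 0$, are also correct (note the second requires differencing the elasticity equation twice and testing the once-differenced temperature equation with $\ddt^2\theta^n_h$, a different test pair from the one underlying \eqref{femreg2}, though the cancellation mechanism is the same).

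The genuine gap is in how you close \eqref{femreg3}. When you multiply $F^n-F^{n-1}\le 0$ by $t_n^2$ and sum, the right-hand side contains $\sum_j(t_j^2-t_{j-1}^2)F^{j-1}\le 3\sum_j\tau t_{j-1}\|\ddt\theta^{j-1}_h\|^2_\kappa$, and you assert that this sum (you cite its unweighted version $\tau\sum_j\|\ddt\theta^j_h\|^2_\kappa$) is ``already bounded by $C\|\theta^0_h\|^2_{H^1}$ in \eqref{femreg1}.'' It is not: \eqref{femreg1} controls $\sum_j\tau\|\ddt\theta^j_h\|^2$ only in $L_2$, together with $\sum_j\tau\|\ddt u^j_h\|^2_{H^1}$ and $\|\theta^n_h\|_{H^1}$; it contains no bound at all on $\nabla\ddt\theta^j_h$. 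The $H^1$-in-time-derivative bound of \eqref{femreg2} is unavailable here because it requires $\theta^0_h=0$, which is the opposite regime. Nor does summing your unweighted $E$-inequality help: that only gives $\sum_j\tau\|\ddt\theta^j_h\|^2_\kappa\le\tfrac12 E^1$, and the best available control of $E^1$ is $\tau E^1\le C\|\theta^0_h\|^2_{H^1}$, so this bound degenerates as $\tau\to 0$. The repair is exactly the paper's intermediate step: multiply $E^n-E^{n-1}\le-2\tau\|\ddt\theta^n_h\|^2_\kappa$ by $t_n$ \emph{before} summing, which yields $t_nE^n+2\sum_j\tau t_j\|\ddt\theta^j_h\|^2_\kappa\le t_1E^1+\sum_j\tau E^{j-1}\le C\|\theta^0_h\|^2_{H^1}$, where the right-hand side genuinely is controlled by \eqref{femreg1}; the weighted sum $\sum_j\tau t_j\|\ddt\theta^j_h\|^2_\kappa$ so obtained is then what feeds into your $t_n^2$-weighted $F$-summation. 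So your architecture (monotone quantities, weights $t_n$ and $t_n^2$, reduction to lower-order sums) is the paper's, but as written the chain of bounds does not close; the $\kappa$-gradient sum must come from the $t_n$-weighted $E$-inequality, not from \eqref{femreg1}.
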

\begin{proof}
	From \eqref{fem1}-\eqref{fem2} and the initial data \eqref{u0} we deduce that the following relation must hold for $n \geq 1$
	\begin{alignat}{2}
	(\sigma(\ddt u^n_h):\varepsilon(v_1))- (\alpha \ddt\theta^n_h, \nabla \cdot v_1) &= (\ddt f^n,v_1),& \quad &\forall v_1 \in V^1_h,\label{udiff1}\\
	(\ddt\theta^n_h,v_2) + (\kappa \nabla \theta^n_h, \nabla v_2) + (\alpha \nabla \cdot \ddt u^n_h,v_2)&= (g^n,v_2),& &\forall v_2 \in V^2_h.\label{udiff2}
	\end{alignat}
	By choosing $v_1=\ddt u^n_h$ and $v_2=\ddt \theta^n_h$ and adding the resulting equations we have
	\begin{align}\label{eq:reg1}
	\frac{c_\sigma}{2}\|\ddt u^n_h\|^2_{H^1} + \frac{1}{2}\|\ddt \theta^n_h\|^2  + (\kappa \nabla \theta^n_h, \nabla \ddt \theta^n_h) \leq C(\|g^n\|^2 + \|\ddt f^n\|^2_{H^{-1}}).
	\end{align}
	Note that the coupling terms cancel. By using Cauchy-Schwarz and Young's inequality we can bound
	\begin{align*}
	\tau(\kappa \nabla \theta^n_h, \nabla \ddt \theta^n_h) = \|\kappa^{1/2}\nabla \theta^n_h\|^2 - (\kappa \nabla \theta^n_h, \nabla \theta^{n-1}_h) \geq \frac{1}{2}\|\theta^n_h\|^2_\kappa-\frac{1}{2}\|\theta^{n-1}_h\|^2_\kappa.
	\end{align*}
	Multiplying \eqref{eq:reg1} by $\tau$, summing over $n$, and using \eqref{sigma-bounds} gives
	\begin{align*}
	\sum_{j=1}^n\tau \|\ddt u^j_h\|^2_{H^1} +\sum_{j=1}^n\tau \|\ddt \theta^j_h\|^2 +  \|\theta^n_h\|^2_{H^1} &\leq C\sum_{j=1}^n\tau(\|g^j\|^2 + \|\ddt f^j\|^2_{H^{-1}}) \\&\quad+ C\|\theta^0_h\|_{H^1},
	\end{align*}
	which is bounded by the right hand side in \eqref{femreg1}. 
	
	For the bound \eqref{femreg2} we note that the following relation must hold for $n \geq 2$
	\begin{alignat}{2}
	(\sigma(\ddt u^n_h):\varepsilon(v_1))- (\alpha \ddt\theta^n_h, \nabla \cdot v_1) &= (\ddt f^n,v_1),& \quad &\forall v_1 \in V^1_h,\label{femdiff1}\\
	(\ddt^2\theta^n_h,v_2) + (\kappa \nabla \ddt \theta^n_h, \nabla v_2) + (\alpha \nabla \cdot \ddt^2 u^n_h,v_2)&= (\ddt g^n,v_2),& &\forall v_2 \in V^2_h.\label{femdiff2}
	\end{alignat}
	Now choose $v_1=\ddt^2u^n_h$ and $v_2=\ddt \theta^n_h$ and add the resulting equations to get 
	\begin{align*}
	(\sigma(\ddt u^n_h):\varepsilon(\ddt^2u^n_h)) + (\ddt^2\theta^n_h,\ddt\theta^n_h) &+ (\kappa \nabla \ddt \theta^n_h, \nabla \ddt \theta^n_h) \\
	&= (\ddt f^n,\ddt^2u^n_h) + (\ddt g^n,\ddt\theta^n_h).
	\end{align*}
	Multiplying by $\tau$ and using Cauchy-Schwarz and Young's inequality gives
	\begin{align*}
	\frac{1}{2}\|\ddt u^n_h\|^2_\sigma + \frac{1}{2}\|\ddt\theta^n_h\|^2 + C\tau \|\ddt \theta^n_h\|^2_{H^1} &\leq \frac{1}{2}\|\ddt\theta^{n-1}_h\|^2 + \frac{1}{2}\|\ddt u^{n-1}_h\|^2_\sigma  \\&\quad+ \tau(\ddt f^n,\ddt^2u^n_h)  + C\|\ddt g^n\|^2_{H^{-1}}.
	\end{align*}
	Summing over $n$ and using \eqref{sigma-bounds} now gives
	\begin{align*}
	\|\ddt u^n_h\|^2_{H^1} +\|\ddt\theta^n_h\|^2 +  \sum_{j=2}^n\tau \|\ddt \theta^j_h\|^2_{H^1}  &\leq C\Big(\|\ddt u^1_h\|^2_{H^1} + \|\ddt\theta^1_h\|^2 \\&\quad + \sum_{j=2}^n\tau\big((\ddt f^j,\ddt^2u^j_h) + \|\ddt g^j\|^2_{H^{-1}}\big)\Big).
	\end{align*}
	Here we use summation by parts to get
	\begin{align*}
	\sum_{j=2}^n\tau(\ddt f^j,\ddt^2u^j_h) &= (\ddt f^n,\ddt u^n_h)-(\ddt f^1,\ddt u^1_h) - \sum_{j=2}^n\tau(\ddt^2 f^j,\ddt u^{j-1}_h)\\
	&\leq C\bigg(\max_{1\leq j \leq n}\|\ddt f^j\|_{H^{-1}} + \sum_{j=2}^n\tau\|\ddt^2 f^j\|_{H^{-1}}\bigg)\max_{1\leq j \leq n}\|\ddt u^j_h\|_{H^1}, 
	\end{align*}
	and $\max_{1\leq j \leq n}\|\ddt u^j_h\|_{H^1}$ can now be kicked to the left hand side. 
	
	To estimate $\ddt\theta^1_h$ and $\ddt u^1_h$ we choose $v_1=\ddt u^1_h$ and $v_2=\ddt \theta^1_h$ in \eqref{udiff1}-\eqref{udiff2} for $n=1$. We thus have, since $\theta^0_h=0$,
	\begin{align*}
	\|\ddt u^1_h\|^2_{H^1} + \|\ddt\theta^1_h\|^2 + \frac{1}{\tau}\|\theta^1_h\|^2_{H^1} \leq C(\|\ddt f^1\|^2_{H^{-1}} + \|g^1\|^2).
	\end{align*}
	The observation that $\frac{1}{\tau}\|\theta^1_h\|^2_{H^1}=\tau\|\ddt\theta^1_h\|^2_{H^1}$ completes the bound \eqref{femreg2}.
	
	Now assume $f=0$ and $g=0$ and note that the following holds for $n\geq 2$, 
	\begin{alignat*}{2}
	(\sigma(\ddt^2 u^n_h):\varepsilon(v_1))- (\alpha \ddt^2\theta^n_h, \nabla \cdot v_1) &= 0,& \quad &\forall v_1 \in V^1_h,\\
	(\ddt^2\theta^n_h,v_2) + (\kappa \nabla \ddt \theta^n_h, \nabla v_2) + (\alpha \nabla \cdot \ddt^2 u^n_h,v_2)&=0,& &\forall v_2 \in V^2_h.
	\end{alignat*}
	Choosing $v_1=\ddt^2 u^n_h$, $v_2=\ddt^2\theta^n_h$ and adding the resulting equations gives 
	\begin{align*}
	(\sigma(\ddt^2 u^n_h):\varepsilon(\ddt^2u^n_h)) + (\ddt^2\theta^n_h,\ddt^2\theta^n_h) + (\kappa \nabla \ddt \theta^n_h, \nabla \ddt^2\theta^n_h) = 0,
	\end{align*}
	where, again, the coupling terms cancel. The two first terms on the left hand side are positive and can thus be ignored. Multiplying by $\tau$ and $t_n^2$ gives after using Cauchy-Schwarz and Young's inequality
	\begin{align*}
	t_n^2\|\ddt \theta^n_h\|^2_\kappa \leq t_{n-1}^2\|\ddt\theta^{n-1}_h\|^2_\kappa + (t_n^2-t_{n-1}^2)\|\ddt^2\theta^{n-1}_h\|^2_\kappa.
	\end{align*}
	Note that $t_n^2-t_{n-1}^2\leq 3\tau t_{n-1}$, where we use that $t_n\leq 2t_{n-1}$ if $n\geq 2$. Summing over $n$ now gives
	\begin{align*}
	t_n^2\|\ddt \theta^n_h\|^2_\kappa \leq t_1^2\|\ddt\theta^1_h\|^2_\kappa + 3\sum_{j=2}^n\tau t_{j-1}\|\ddt\theta^{j-1}_h\|^2_\kappa.
	\end{align*}
	To bound the last sum we choose $v_1=\ddt^2 u^n_h$, $v_2=\ddt\theta^n_h$ in \eqref{femdiff1}-\eqref{femdiff2}, now with $f=0$ and $g=0$. Adding the resulting equations gives
	\begin{align*}
	(\ddt^2\theta^n_h,\ddt\theta^n_h) + (\kappa \nabla \ddt \theta^n_h, \nabla \ddt \theta^n_h) + (\sigma(\ddt u^n_h):\varepsilon(\ddt^2u^n_h))= 0,
	\end{align*}
	Multiplying by $\tau$ and $t_n$ gives after using Cauchy-Schwarz inequality
	\begin{align*}
	\frac{t_n}{2}&\|\ddt u^n_h\|^2_\sigma + \frac{t_n}{2}\|\ddt\theta^n_h\|^2 + c_\kappa\tau t_n\|\ddt \theta^n_h\|^2_{H^1} \\&\leq \frac{t_{n-1}}{2}\|\ddt u^{n-1}_h\|^2_\sigma  +\frac{t_{n-1}}{2}\|\ddt\theta^{n-1}_h\|^2 + \frac{\tau}{2}\|\ddt u^{n-1}_h\|^2_\sigma+  \frac{\tau}{2}\|\ddt\theta^{n-1}_h\|^2 .
	\end{align*}
	Summing over $n$ and using \eqref{sigma-bounds} thus gives
	\begin{align*}
	&\frac{c_\sigma t_n}{2}\|\ddt u^n_h\|^2_{H^1} + \frac{t_n}{2}\|\ddt\theta^n_h\|^2 + \sum_{j=2}^n\tau t_j\|\ddt \theta^j_h\|^2_{H^1}  \\
	&\quad \leq \frac{C_\sigma t_1}{2}\|\ddt u^1_h\|^2_{H^1}  + \frac{t_1}{2}\|\ddt\theta^1_h\|^2 + C\sum_{j=2}^n\tau\big(\|\ddt u^{j-1}_h\|^2_{H^1} + \|\ddt\theta^{j-1}_h\|^2\big).
	\end{align*}
	To bound the last sum in this estimate we choose $v_1=\ddt u^n_h$, $v_2=\ddt\theta^n_h$ in \eqref{udiff1}-\eqref{udiff2} and multiply by $\tau$ to get
	\begin{align*}
	 c_\sigma\tau\|\ddt u^n_h\|^2_{H^1} + \tau\|\ddt\theta^n_h\|^2 + \frac{1}{2}\|\theta^n_h\|^2_\kappa \leq  \frac{1}{2}\|\theta^{n-1}_h\|^2_\kappa.
	\end{align*} 
	Summing over $n$ and using \eqref{kappa-bounds} gives
	\begin{align}\label{eq:sumtheta}
	C\sum_{j=1}^n\tau\big(\|\ddt\theta^j_h\|^2 + \|\ddt u^j_h\|^2_{H^1}\big) + \frac{c_\kappa}{2}\|\theta^n_h\|^2_{H^1}\leq\frac{C_\kappa}{2}\|\theta^0_h\|^2_{H^1}.
	\end{align} 
	It remains to bound $t^2_1\|\ddt\theta^1_h\|^2_{H^1}$, $t_1\|\ddt \theta^1_h\|^2$, and $t_1\|\ddt u^1_h\|_{H^1}$. For this purpose we recall that $t_1=\tau$ and use \eqref{eq:sumtheta} for $n=1$ to get 
	\begin{align*}
	t_1\|\ddt u^1_h\|_{H^1} &+ t_1\|\ddt \theta^1_h\|^2 + t^2_1\|\ddt\theta^1_h\|^2_{H^1} \\&\leq C(\tau(\|\ddt u^1_h\|^2_{H^1} + \|\ddt \theta^1_h\|^2) + \|\theta^1_h\|^2_{H^1} + \|\theta^0_h\|^2_{H^1}) \leq C\|\theta^0\|^2_{H^1}.
	\end{align*}
	Finally, we have that
	\begin{align*}
	t_n\|\ddt u^n_h\|^2_{H^1} + t_n\|\ddt \theta^n_h\|^2 \leq C\|\theta^0\|^2_{H^1},\quad t_n^2\|\ddt \theta^n_h\|^2_{H^1} \leq C\|\theta^0\|^2_{H^1},
	\end{align*}
	and thus \eqref{femreg3} follows.
\end{proof}



\subsection{Generalized finite element}\label{subsec:gfem}
In this section we shall derive a generalized finite element method. First we define $V^1_H$ and $V^2_H$ analogously to $V^1_h$ and $V^2_h$, but with a larger mesh size $H>h$. In addition, we assume that the family of triangulations $\{\mathcal T_H\}_{H>h}$ is quasi-uniform and that $\mathcal{T}_h$ is a refinement of $\mathcal{T}_H$ such that $V^1_H \subseteq V^1_h$ and $V^2_H \subseteq V^2_h$. Furthermore, we use the notation $\mathcal N = \mathcal N^1 \times \mathcal N^2$ to denote the free nodes in $V^1_H \times V^2_H$. The aim is now to define a new (multiscale) space with the same dimension as $V^1_H\times V^2_H$, but with better approximation properties. 
For this purpose we define an interpolation operator $I_H=(I^1_H,I^2_H):V^1_h\times V^2_h\rightarrow V^1_H\times V^2_H$ with the property that $I_H \circ I_H = I_H$ and for all $v=(v_1,v_2) \in V^1_h \times V^2_h$
\begin{align}\label{interpolation}
H^{-1}_K\|v-I_Hv\|_{L_2(K)} + \|\nabla I_Hv\|_{L_2(K)} \leq C_I\|\nabla v\|_{L_2(\omega_K)}, \quad \forall K\in \mathcal{T}_H,
\end{align}
where
\begin{align*}
\omega_K:=\text{int } \{\hat K \in \mathcal T_H: \hat K \cap K\neq \emptyset\}.
\end{align*}
Since the mesh is assumed to be shape regular, the estimates in \eqref{interpolation} are also global, i.e.,
\begin{align}\label{interpolation-global}
H^{-1}\|v-I_Hv\|+ \|\nabla I_Hv\| \leq C\|\nabla v\|,
\end{align}
where $C$ is a constant depending on the shape regularity parameter,  $\gamma>0$;
\begin{align}\label{shape}
\gamma:=\max_{K \in \mathcal{T}_H} \gamma_K, \ \text{with} \ \gamma_K:= \frac{\diam B_K}{\diam K}, \ \text{for}\ K\in \mathcal{T}_H,
\end{align}
where $B_K$ is the largest ball contained in $K$.


One example of an interpolation that satisfies the above assumptions is $I^i_H=E^i_H\circ \Pi^i_H$, $i=1,2$. Here $\Pi^i_H$ denotes the piecewise $L_2$-projection onto $P_1(\mathcal{T}_H)$ ($P_1(\mathcal{T}_H)^d$ if $i=1$), the space of functions that are affine on each triangle $K\in \mathcal{T}_H$. Furthermore, $E^1_H$ is an averaging operator mapping $(P_1(\mathcal{T}_H))^d$ into $V^1_H$, by (coordinate wise)
\begin{align*}
(E^{1,j}_H(v))(z) = \frac{1}{\card\{K\in \mathcal{T}_H: z \in K\}}\sum_{K\in \mathcal{T}_H: z \in K} v^j|_K(z), \quad 1\leq j\leq d,
\end{align*} 
where $z\in \mathcal N^1$. $E^2_H$ mapping $\mathcal P^1_H$ to $V^2_H$ is defined similarly. For a further discussion on this interpolation and other available options we refer to \cite{Peterseim15}. 

Let us now define the kernels of $I^1_H$ and $I^2_H$ 
\begin{align*}
V^1_\f :=\{v \in V^1_h: I^1_Hv=0\},\quad V^2_\f :=\{v \in V^2_h: I^2_Hv=0\}
\end{align*}
The kernels are fine scale spaces in the sense that they contain all features that are not captured by the (coarse) finite element spaces $V^1_H$ and $V^2_H$. Note that the interpolation leads to the splits $V^1_h=V^1_H \oplus V^1_\f $ and $V^2_h=V^2_H \oplus V^2_\f $, meaning that any function $v_1 \in V^1_h$ can be uniquely decomposed as $v_1 = v_{1,H} + v_{1,\f}$, with $v_{1,H} \in V^1_H$ and $v_{1,\f}\in V^1_\f$, and similarly for $v_2 \in V^2_h$.

Now, we introduce a Ritz projection onto the fine scale spaces. For this we use the bilinear forms associated with the diffusion in \eqref{weak1}-\eqref{weak2}. The projection of interest is thus $R_\f : V^1_h\times V^2_h \rightarrow V^1_\f \times V^2_\f $, such that for all $(v_1,v_2) \in V^1_h \times V^2_h$, $R_\f (v_1,v_2)=(R^1_\f v_1,R^2_\f v_2)$ fulfills
\begin{alignat}{2}
(\sigma(v_1 - R^1_\f v_1):\varepsilon(w_1))&= 0,& \quad &\forall w_1 \in V^1_\f ,\label{fritz1}\\
(\kappa \nabla (v_2-R^2_\f v_2), \nabla w_2) &= 0,& &\forall w_2 \in V^2_\f  \label{fritz2}.
\end{alignat}
Note that this is an uncoupled system and $R^1_\f $ and $R^2_\f $ are classical Ritz projections.

For any $(v_1,v_2)\in V^1_h\times V^2_h$ we have, due to the splits of the spaces $V^1_h$ and $V^2_h$ above, that
\begin{align*}
v_1 - R^1_\f v_1 = v_{1,H}-R^1_\f v_{1,H}, \quad v_2 - R^2_\f v_2 = v_{2,H}-R^2_\f v_{2,H}.
\end{align*}
Using this we define the multiscale spaces 
\begin{align}\label{msspace}
V^1_\ms :=\{v-R^1_\f v:v\in V^1_H\}, \quad V^2_\ms :=\{v-R^2_\f v:v\in V^2_H\}.
\end{align}
Clearly $V^1_\ms \times V^2_\ms $ has the same dimension as $V^1_H \times V^2_H$. Indeed, with $\lambda^1_x$ denoting the hat function in $V^1_H$ at node $x$ and $\lambda^2_y$ the hat function in $V^2_H$ at node $y$, such that
\begin{align*}
V^1_H\times V^2_H = \Span \{(\lambda^1_x,0),(0,\lambda^2_y):(x,y)\in \mathcal N\},
\end{align*}
a basis for $V^1_\ms \times V^2_\ms $ is given by 
\begin{align}\label{msbasis}
\{(\lambda^1_x-R^1_\f \lambda^1_x,0),(0,\lambda^2_y-R^2_\f \lambda^2_y):(x,y)\in \mathcal N\}.
\end{align}

Finally, we also note that the splits $V^1_h=V^1_\ms  \oplus V^1_\f $ and $V^2_h=V^2_\ms  \oplus V^2_\f $ hold, which fulfill the following orthogonality relation
\begin{alignat}{2}\label{orthog}
(\sigma(v_1):\varepsilon(w_1)&= 0,& \quad &\forall v_1\in V^1_\ms,\, w_1\in V^1_\f ,\\
(\kappa \nabla v_2, \nabla w_2) &= 0,& &\forall v_2\in V^2_\ms,\, w_2 \in V^2_\f 
\end{alignat}

\subsubsection{Stationary problem}
For the error analysis in Section~\ref{sec:error} it is convenient to define the Ritz projection onto the multiscale space using the bilinear form given by the stationary version of \eqref{weak1}-\eqref{weak2}. We thus define $R_\ms : V^1_h\times V^2_h \rightarrow V^1_\ms \times V^2_\ms $, such that for all $(v_1,v_2) \in V^1_h \times V^2_h$, $R_\ms (v_1,v_2)=(R^1_\ms (v_1,v_2),R^2_\ms v_2)$ fulfills
\begin{alignat}{2}
(\sigma(v_1 - R^1_\ms (v_1,v_2)):\varepsilon(w_1)) - (\alpha(v_2-R^2_\ms v_2),\nabla\cdot w_1)&=0,& \quad &\forall w_1 \in V^1_\ms , \label{msritz1}\\
(\kappa \nabla (v_2-R^2_\ms v_2), \nabla w_2&)=0, & &\forall w_2\in V^2_\ms. \label{msritz2}
\end{alignat}
Note that we must have $R^2_\ms = I-R^2_\f $, but $R^1_\ms \neq I - R^1_\f$ in general. 

The Ritz projection in \eqref{msritz1}-\eqref{msritz2} is upper triangular. Hence, when solving for $R^1_\ms(v_1,v_2)$ the term $(\alpha R^2_\ms v_2 ,\nabla\cdot w_1)$ in \eqref{msritz2} is known. Since this term has multiscale features and appears on the right hand side, we impose a correction on $R^1_\ms(v_1,v_2)$ inspired by the ideas in \cite{Henning14} and \cite{Larson07}. The correction is defined as the element $\tilde R_\f v_2  \in V^1_\f $, which fulfills  
\begin{align}\label{stationarycorr}
(\sigma(\tilde R_\f v_2):\varepsilon(w_1)) = (\alpha R^2_\ms v_2 ,\nabla\cdot w_1),\quad \forall w_1 \in V^1_\f,
\end{align}
and we define $\tilde R^1_\ms(v_1,v_2) = R^1_\ms(v_1,v_2) + \tilde R_\f v_2$.  

Note that the Ritz projections are stable in the sense that
\begin{align}\label{msstable}
\|\tilde R^1_{\ms}(v_1,v_2)\|_{H^1}\leq C(\|v_1\|_{H^1} + \|v_2\|_{H^1}), \quad \|R^2_{\ms}v_2\|_{H^1}\leq C\|v_2\|_{H^1}.
\end{align}

\begin{myremark}
	The problem to find $\tilde R_\f v_2$ is posed in the entire fine scale space and is thus computationally expensive to solve. The aim is to localize these computations to smaller patches of coarse elements, see Section~\ref{sec:loc}.
\end{myremark}

To derive error bounds for this projection we define two operators $\mathcal A_1:V^1_h\times V^2_h\rightarrow V^1_h$ and $\mathcal A_2:V^2_h \rightarrow V^2_h$ such that for all $(v_1,v_2)\in V^1_h\times V^2_h$ we have
\begin{align}
(\mathcal A_1(v_1,v_2),w_1) &= (\sigma(v_1):\varepsilon(w_1)) - (\alpha v_2, \nabla \cdot w_1), \quad \forall w_1\in V^1_h,\label{A1operator} \\
(\mathcal A_2v_2,w_2) &= (\kappa \nabla v_2, \nabla w_2), \quad \forall w_2\in V^2_h.\label{A2operator}
\end{align}

\begin{mylemma}\label{stationaryconv}
	For all $(v_1,v_2)\in V^1_h\times V^2_h$ it holds that
	\begin{align}
	\label{stationaryconv1}\|v_1 - \tilde R^1_\ms(v_1,v_2) \|_{H^1} &\leq C(H\|\mathcal A_1(v_1,v_2)\| + \|v_2 - R^2_\ms v_2\|)\\
	&\leq CH(\|\mathcal A_1(v_1,v_2)\|+ \|v_2\|_{H^1}), \notag\\
	\|v_2 - R^2_\ms v_2\|_{H^1} &\leq CH\|\mathcal A_2 v_2\|. \label{stationaryconv2}
	\end{align}
\end{mylemma}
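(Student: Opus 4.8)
The plan is to prove the two estimates in turn, treating \eqref{stationaryconv2} first since it is an uncoupled, standard orthogonal-decomposition estimate, and then feeding it into the proof of \eqref{stationaryconv1}. Both arguments rest on the same three ingredients: the fact that the relevant error lies in a kernel $V^i_\f$, so that $I^i_H$ annihilates it and the interpolation bound \eqref{interpolation-global} converts an $L_2$-norm into $H$ times a gradient; the $\sigma$- and $\kappa$-orthogonality of the splits recorded in \eqref{orthog}; and the norm equivalences \eqref{sigma-bounds}--\eqref{kappa-bounds} to pass between energy and $H^1$ norms.

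For \eqref{stationaryconv2}, I would first observe that $R^2_\ms$ is exactly the $\kappa$-orthogonal projection onto $V^2_\ms$ along $V^2_\f$, so $e := v_2 - R^2_\ms v_2 \in V^2_\f$. Testing the energy with $e$ and using \eqref{orthog} to kill the contribution of $R^2_\ms v_2 \in V^2_\ms$ gives $\|e\|_\kappa^2 = (\kappa \nabla v_2, \nabla e) = (\mathcal A_2 v_2, e)$ by \eqref{A2operator}. Cauchy--Schwarz, the interpolation estimate $\|e\| \leq CH\|\nabla e\|$ (valid because $I^2_H e = 0$), and \eqref{kappa-bounds} then yield $\|e\|_{H^1} \leq CH\|\mathcal A_2 v_2\|$, which is \eqref{stationaryconv2}. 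The same observation also gives the auxiliary bound $\|v_2 - R^2_\ms v_2\| \leq CH\|v_2\|_{H^1}$ needed for the second line of \eqref{stationaryconv1}: since $e \in V^2_\f$ we have $\|e\| \leq CH\|\nabla e\| \leq CH\|e\|_{H^1}$, while $\|e\|_\kappa \leq \|v_2\|_\kappa$ because $R^2_\ms$ is an orthogonal projection.

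For \eqref{stationaryconv1}, set $e_1 := v_1 - \tilde R^1_\ms(v_1,v_2) = v_1 - R^1_\ms(v_1,v_2) - \tilde R_\f v_2$ and decompose $e_1 = e_{1,\ms} + e_{1,\f}$ via $V^1_h = V^1_\ms \oplus V^1_\f$. The heart of the argument is to derive two testing identities. For $w_1 \in V^1_\ms$, using \eqref{msritz1} together with the orthogonality of $\tilde R_\f v_2 \in V^1_\f$ against $V^1_\ms$, one obtains $(\sigma(e_1):\varepsilon(w_1)) = (\alpha(v_2 - R^2_\ms v_2), \nabla \cdot w_1)$. For $w_1 \in V^1_\f$, using that $R^1_\ms(v_1,v_2) \in V^1_\ms$ is orthogonal to $V^1_\f$, the correction equation \eqref{stationarycorr}, and the definition \eqref{A1operator} of $\mathcal A_1$, one obtains $(\sigma(e_1):\varepsilon(w_1)) = (\mathcal A_1(v_1,v_2), w_1) + (\alpha(v_2 - R^2_\ms v_2), \nabla \cdot w_1)$. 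Testing with $w_1 = e_{1,\ms}$ and $w_1 = e_{1,\f}$ respectively and summing gives $\|e_1\|_\sigma^2 = (\mathcal A_1(v_1,v_2), e_{1,\f}) + (\alpha(v_2 - R^2_\ms v_2), \nabla \cdot e_1)$. The first term is $\leq CH\|\mathcal A_1(v_1,v_2)\|\,\|e_1\|_\sigma$ after applying the interpolation bound to $e_{1,\f} \in V^1_\f$ and using $\|e_{1,\f}\|_\sigma \leq \|e_1\|_\sigma$; the second is $\leq C\|v_2 - R^2_\ms v_2\|\,\|e_1\|_{H^1}$ by Cauchy--Schwarz. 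Dividing through by $\|e_1\|_\sigma$ and invoking \eqref{sigma-bounds} yields the first line of \eqref{stationaryconv1}, and the auxiliary bound from the previous paragraph yields the second.

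The main obstacle, and the step that deserves the most care, is the bookkeeping that produces the two testing identities---in particular, seeing why the correction $\tilde R_\f v_2$ is exactly what is needed. Against a fine-scale test function $w_1 \in V^1_\f$, the raw multiscale projection would leave the term $(\alpha v_2, \nabla \cdot w_1)$, whose $L_2$-factor $\|v_2\|$ carries no power of $H$ and would destroy the rate. Splitting $v_2 = R^2_\ms v_2 + (v_2 - R^2_\ms v_2)$, the correction \eqref{stationarycorr} is designed precisely to cancel the ``large'' multiscale part $(\alpha R^2_\ms v_2, \nabla \cdot w_1)$, leaving only $(\alpha(v_2 - R^2_\ms v_2), \nabla \cdot w_1)$ with $v_2 - R^2_\ms v_2 \in V^2_\f$; this residual is genuinely of order $H$ by the bound established for \eqref{stationaryconv2}. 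Once this cancellation is understood, the remaining estimates are routine applications of Cauchy--Schwarz, the interpolation property, and the norm equivalences.
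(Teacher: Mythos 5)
Your proof is correct and follows essentially the same route as the paper: you arrive at the same key energy identity, $\|e_1\|^2_\sigma = (\mathcal A_1(v_1,v_2), e_{1,\f}) + (\alpha(v_2 - R^2_\ms v_2),\nabla\cdot e_1)$ (the paper writes $e_{1,\f}$ as $R^1_\f e$, which is the same function by the $\sigma$-orthogonality of the split $V^1_h = V^1_\ms \oplus V^1_\f$), using the correction \eqref{stationarycorr} for exactly the same cancellation and the interpolation estimate on the fine-scale component to gain the factor $H$. The only cosmetic difference is that you prove \eqref{stationaryconv2} and the auxiliary bound $\|v_2 - R^2_\ms v_2\| \leq CH\|v_2\|_{H^1}$ directly from the observation $v_2 - R^2_\ms v_2 = R^2_\f v_2 \in V^2_\f$ together with the interpolation and orthogonality properties, whereas the paper obtains them by citing the original LOD result and an Aubin--Nitsche duality argument.
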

\begin{proof}
	It follows from \cite{Malqvist14} that \eqref{stationaryconv2} holds, since \eqref{msritz2} is an elliptic equation of Poisson type. Using an Aubin-Nitsche duality argument as in, e.g., \cite{MP15}, we can derive the following estimate in the $L_2$-norm
	\begin{align*}
	\|v_2 - R^2_\ms v_2 \|\leq CH\|v_2 - R^2_\ms v_2\|_{H^1} \leq CH\|v_2\|_{H^1},
	\end{align*}
	which proves the second inequality in \eqref{stationaryconv1}. 
	
	It remains to bound $\|v_1-\tilde R^1_\ms(v_1,v_2)\|_{H^1}$. Recall that any $v\in V^1_h$ can be decomposed as 
	\begin{align*}
	v=v-R^1_\f v + R^1_\f v =(I-R^1_\f)v + R^1_\f v,
	\end{align*} 
	where $(I-R^1_\f)v \in V^1_\ms$. Using the orthogonality \eqref{orthog} and that $(\sigma(\cdot):\varepsilon(\cdot))$ is a symmetric bilinear form we get
	\begin{align*}
	(\sigma(\tilde R^1_\ms(v_1,v_2)):\varepsilon(v)) &= (\sigma(R^1_\ms(v_1,v_2) + \tilde R_\f v_2):\varepsilon((I-R^1_\f)v + R^1_\f v)) \\&= (\sigma(R^1_\ms(v_1,v_2)):\varepsilon((I-R^1_\f) v)) + (\sigma(\tilde R_\f v_2):\varepsilon(R^1_\f v)).
	\end{align*}
	Due to \eqref{msritz1} and \eqref{stationarycorr} we thus have
	\begin{align*}
	&(\sigma(R^1_\ms(v_1,v_2)):\varepsilon((I-R^1_\f) v)) + (\sigma(\tilde R_\f v_2):\varepsilon(R^1_\f v))\\
	&\,=(\sigma(v_1):\varepsilon((I-R^1_\f)v)) - (\alpha (v_2-R^2_\ms v_2),\nabla \cdot (I-R^1_\f)v) + (\alpha R^2_\ms v_2,\nabla \cdot R^1_\f v) \\
	&\,=(\mathcal A_1(v_1,v_2),(I-R^1_\f)v) + (\alpha R^2_\ms v_2,\nabla \cdot v).
	\end{align*}
	Define $e:=v_1-\tilde R^1_\ms(v_1,v_2)$. Using the above relation together with \eqref{A1operator} we get the bound
	\begin{align*}
	c_\sigma\|e\|^2_{H^1}&\leq (\sigma(e):\varepsilon(e)) = (\sigma(v_1):\varepsilon(e)) - (\mathcal A_1(v_1,v_2),(I-R^1_\f)e) - (\alpha R^2_\ms v_2, \nabla \cdot e) \\
	&=(\mathcal A_1(v_1,v_2),R^1_\f e) + (\alpha(v_2-R^2_\ms v_2),\nabla\cdot e)\\
	&\leq\|\mathcal{A}_1(v_1,v_2)\|\|R^1_\f e\| + C\|v_2-R^2_\ms v_2\|\|e\|_{H^1}
	\end{align*}
	Since $R^1_\f e\in V^1_\f$ we have due to \eqref{interpolation}
	\begin{align*}
	\|R^1_\f e\|&=\|R^1_\f e-I^1_HR^1_\f e\|\leq CH\|R^1_\f e\|_{H^1} \leq CH\|e\|_{H^1},
	\end{align*}
	where we have used the stability $\|R^1_\f v\|_{H^1}\leq C\|v\|_{H^1}$ for $v\in V^1_h$. 
	The first inequality in \eqref{stationaryconv1} now follows.
\end{proof}

\begin{myremark}
	Without the correction $\tilde R_\f$ the error bound \eqref{stationaryconv1} would depend on the derivatives of $\alpha$,
	\begin{align*}
	\|v_1 - R^1_\ms(v_1,v_2) \|_{H^1} &\leq C_{\alpha'}(H\|\mathcal A_1(v_1,v_2)\| + \|v_2 - R^2_\ms v_2\|),
	\end{align*}
	where $\alpha'$ is large if $\alpha$ has multiscale features.
\end{myremark}

\subsubsection{Time-dependent problem}
A generalized finite element method with a backward Euler discretization in time is now defined by replacing $V^1_h$ with $V^1_\ms$ and $V^2_h$ with $V^2_\ms$ in \eqref{fem1}-\eqref{fem2} and adding a correction similar to \eqref{stationarycorr}. The method thus reads; for $n\in \{1,...,N\}$ find $\tilde u^n_\ms=u^n_\ms + u^n_\f$, with $u^n_\ms\in V^1_\ms$, $u^n_\f \in V^1_\f$, and $\theta^n_\ms \in V^2_\ms$, such that
\begin{alignat}{2}
(\sigma(\tilde u^n_\ms):\varepsilon(v_1))- (\alpha \theta^n_\ms, \nabla \cdot v_1) &= (f^n,v_1),& \quad &\forall v_1 \in V^1_\ms,\label{gfem1}\\
(\ddt\theta^n_\ms,v_2) + (\kappa \nabla \theta^n_\ms, \nabla v_2) + (\alpha \nabla \cdot \ddt \tilde u^n_\ms,v_2)&= (g^n,v_2),& &\forall v_2 \in V^2_\ms \label{gfem2},\\
(\sigma(u^n_\f):\varepsilon(w_1)) - (\alpha \theta^n_\ms, \nabla\cdot w_1)&=0,& &\forall w_1 \in V^1_\f.\label{timecorr}
\end{alignat}
where $\theta^0_\ms=R^2_\ms \theta^0_h$. Furthermore, we define $\tilde u^0_\ms:=u^0_\ms + u^0_\f$, where $u^0_\f\in V^1_\f$ is defined by \eqref{timecorr} for $n=0$ and $u^0_\ms\in V^1_\ms$, such that
\begin{align}\label{u0ms}
(\sigma(\tilde u^0_\ms):\varepsilon(v_1)) - (\alpha \theta^0_\ms, \nabla\cdot v_1)=(f^0,v_1), \quad \forall v_1 \in V^1_\ms.
\end{align}
\begin{mylemma}\label{wellposed}
	The problem \eqref{gfem1}-\eqref{gfem2} is well-posed.
\end{mylemma}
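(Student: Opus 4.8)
The plan is to observe that, at each time level $n$, the system \eqref{gfem1}--\eqref{timecorr} is a \emph{square} finite-dimensional linear system for the triple $(u^n_\ms,u^n_\f,\theta^n_\ms)\in V^1_\ms\times V^1_\f\times V^2_\ms$, since the three test spaces $V^1_\ms$, $V^2_\ms$, and $V^1_\f$ match the trial spaces dimension by dimension. Hence it suffices to prove uniqueness, namely that the homogeneous system obtained by setting $f^n=g^n=0$ together with the data $\theta^{n-1}_\ms=0$ and $\tilde u^{n-1}_\ms=0$ inherited from the previous step forces $u^n_\ms=u^n_\f=\theta^n_\ms=0$.

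First I would test \eqref{gfem1} with $v_1=u^n_\ms$ and \eqref{timecorr} with $w_1=u^n_\f$. One cannot test \eqref{gfem1} directly with $\tilde u^n_\ms$ because $\tilde u^n_\ms\notin V^1_\ms$; instead the orthogonality \eqref{orthog} gives $(\sigma(u^n_\f):\varepsilon(u^n_\ms))=0$, so that $(\sigma(\tilde u^n_\ms):\varepsilon(u^n_\ms))=\|u^n_\ms\|^2_\sigma$, while \eqref{timecorr} contributes $\|u^n_\f\|^2_\sigma$. Adding the two identities yields $\|u^n_\ms\|^2_\sigma+\|u^n_\f\|^2_\sigma-(\alpha\theta^n_\ms,\nabla\cdot\tilde u^n_\ms)=0$. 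Next I would multiply the backward Euler equation \eqref{gfem2} by $\tau$ and test with $v_2=\theta^n_\ms$ (the previous-step terms drop out), obtaining $\|\theta^n_\ms\|^2+\tau\|\theta^n_\ms\|^2_\kappa+(\alpha\nabla\cdot\tilde u^n_\ms,\theta^n_\ms)=0$.

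The decisive step is to add these two relations: the coupling contributions $-(\alpha\theta^n_\ms,\nabla\cdot\tilde u^n_\ms)$ and $+(\alpha\nabla\cdot\tilde u^n_\ms,\theta^n_\ms)$ are identical up to sign and cancel, leaving
\begin{align*}
\|u^n_\ms\|^2_\sigma+\|u^n_\f\|^2_\sigma+\|\theta^n_\ms\|^2+\tau\|\theta^n_\ms\|^2_\kappa=0.
\end{align*}
Since every term is nonnegative and $\tau>0$, all of them vanish; in particular $\theta^n_\ms=0$, and the coercivity \eqref{sigma-bounds} forces $u^n_\ms=u^n_\f=0$, hence $\tilde u^n_\ms=0$. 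This establishes uniqueness, and therefore existence, for the square system. I do not expect a genuine obstacle here: the only points requiring care are verifying that the system is square (so that uniqueness yields existence), keeping the splitting $\tilde u^n_\ms=u^n_\ms+u^n_\f$ and the orthogonality \eqref{orthog} straight so as to avoid testing an equation with a function outside its test space, and using the correct $\tau$-scaling so that the displacement--temperature coupling cancels exactly. The same stationary argument, together with the coercivity of the $\sigma$-form, also shows that the initial displacement defined through \eqref{u0ms} and \eqref{timecorr} is well defined.
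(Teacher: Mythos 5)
Your proof is correct, and it follows the paper's overall strategy: note that \eqref{gfem1}--\eqref{timecorr} forms a square linear system at each time level, reduce existence to uniqueness, and establish uniqueness by an energy argument in which the thermoelastic coupling terms cancel, with the fine-scale part $u^n_\f$ controlled through \eqref{timecorr} and the orthogonality \eqref{orthog}. The execution differs in a worthwhile way, however. The paper does not pass to the homogeneous system; it keeps the data $f^n$, $g^n$ and the previous iterate, and tests \eqref{gfem1} with the increment $u^n_\ms-u^{n-1}_\ms$ (needed so that the coupling term matches $\ddt\tilde u^n_\ms$ in \eqref{gfem2}), which forces it to invoke the symmetry identity \eqref{symmetry} together with Cauchy--Schwarz and Young's inequalities to absorb the previous-step contributions; the outcome is a genuine stability estimate bounding $\|u^n_\ms\|_{H^1}$, $\|\theta^n_\ms\|$, and $\|u^n_\f\|_{H^1}$ by the data and the previous iterate. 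You instead exploit linearity fully: with $f^n=g^n=0$ and vanishing previous-step data one has $\ddt\tilde u^n_\ms=\tilde u^n_\ms/\tau$, so testing with $u^n_\ms$, $u^n_\f$, and $\tau\theta^n_\ms$ produces exact cancellation of the coupling and an identity in which every term is a nonnegative quadratic form, whence the solution vanishes; no Young's inequality or symmetry identity is required. Your route is shorter and cleaner for the bare well-posedness claim; the paper's route buys, as a byproduct, a quantitative a priori bound (continuity of the discrete solution with respect to the data), which is stronger than invertibility alone and is in the same spirit as the estimates used later in the error analysis.
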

\begin{proof}
	Given $u^{n-1}_\ms$, $\theta^{n-1}_\ms$, and $u^{n-1}_\f$, the equations \eqref{gfem1}-\eqref{timecorr} yields a square system. Hence, it is sufficient to prove that the solution is unique. Let $v_1=u^n_{\ms}-u^{n-1}_{\ms}$ in \eqref{gfem1} and $v_2=\tau\theta^n_{\ms}$ in \eqref{gfem2} and add the resulting equations to get
	\begin{align*}
		(\sigma(u^n_{\ms}):\varepsilon(u^n_{\ms}-u^{n-1}_{\ms})) &+ (\sigma(u^n_{\f}):\varepsilon(u^n_{\ms}-u^{n-1}_{\ms}))+ \tau(\ddt \theta^n_{\ms},\theta^n_{\ms}) \\ &+ c_\kappa\tau\|\theta^n_{\ms}\|^2_{H^1} + (\alpha \nabla\cdot (u^n_{\f}-u^{n-1}_{\f}), \theta^n_{\ms,k})
		\\&\leq  (f^n,u^n_{\ms}-u^{n-1}_{\ms}) + \tau(g^n,\theta^n_{\ms}).
	\end{align*}
	Using the orthogonality \eqref{orthog} and \eqref{timecorr} this simplifies to
	\begin{align*}
	(\sigma(u^n_{\ms}):\varepsilon(u^n_{\ms}-u^{n-1}_{\ms})) &+ \tau(\ddt \theta^n_{\ms},\theta^n_{\ms}) + c_\kappa\tau\|\theta^n_{\ms}\|^2_{H^1} + c_\sigma\|u^n_{\f}\|^2_{H^1}
	\\&\leq  (f^n,u^n_{\ms}-u^{n-1}_{\ms}) + \tau(g^n,\theta^n_{\ms}) +  (\sigma(u^n_\f):\varepsilon(u^{n-1}_{\f})).
	\end{align*}
	Now, using that $(\sigma(\cdot):\varepsilon(\cdot))$ is a symmetric bilinear form we get the following identity 
	\begin{align}\label{symmetry}
		(\sigma(v):\varepsilon(v-w)) = \frac{1}{2}(\sigma(v):\varepsilon(v)) &+ \frac{1}{2}(\sigma(v-w):\varepsilon(v-w))\\& - \frac{1}{2}(\sigma(w):\varepsilon(w)),\notag
	\end{align}
	and using Cauchy-Schwarz and Young's inequality we derive
	\begin{align*}
		(f^n,u^n_{\ms}-u^{n-1}_{\ms}) \leq C\|f^n\|_{H^{-1}} + \frac{1}{2}(\sigma(u^n_{\ms}-u^{n-1}_{\ms}):\varepsilon(u^n_{\ms}-u^{n-1}_{\ms})).
	\end{align*}
	This, together with the estimate $\tau(\ddt \theta^n_{\ms},\theta^n_{\ms})\geq \frac{1}{2}\|\theta^n_{\ms}\|^2-\frac{1}{2}\|\theta^{n-1}_{\ms}\|^2$ and \eqref{sigma-bounds}, leads to
	\begin{align*}
		\frac{c_\sigma}{2}\|u^n_{\ms}\|^2_{H^1} &+ \frac{1}{2}\|\theta^n_{\ms}\|^2 +  c_\kappa\tau\|\theta^n_{\ms}\|^2_{H^1} +
		\frac{c_\sigma}{2}\|u^n_{\f}\|^2_{H^1} \\&\leq  C(\|f^n\|^2_{H^{-1}} + \tau\|g^n\|^2 + \|\theta^{n-1}_{\ms}\|^2 + \|u^{n-1}_{\ms}\|^2_{H^1} + \|u^{n-1}_{\f}\|^2_{H^1}).
	\end{align*}
	Hence, a unique solution exists.
\end{proof}

\section{Localization}\label{sec:loc}
In this section we show how to truncate the basis functions, which is motivated by the exponential decay of \eqref{msbasis}. We consider a localization inspired by the one proposed in \cite{Henning14}, which is performed by restricting the fine scale space to patches of coarse elements defined by the following; for $K\in \mathcal T_H$ 
\begin{align*}
\omega_{0}(K)&:=\text{int } K,\\ \omega_{k}(K)&:=\text{int }\big(\cup \{\hat K \in \mathcal T_H: \hat K \cap \overline{\omega_{k-1}(K)}\neq \emptyset\}\big), \quad k=1,2,...
\end{align*}
Now let $V^1_{\f }(\omega_k(K)):=\{v\in V^1_\f :v(z)=0 \text{ on }(\overline{\Omega}\setminus\Gamma^u_N)\setminus \omega_k(K)\}$ be the restriction of $V^1_\f $ to the patch $\omega_k(T)$. We define $V^2_\f (\omega_k(K))$ similarly.

The localized fine scale space can now be used to approximate the fine scale part of the basis functions in \eqref{msbasis}, which significantly reduces the computational cost for these problems. Let $(\cdot,\cdot)_{\omega}$ denote the $L_2$ inner product over a subdomain $\omega \subseteq \Omega$ and define the local Ritz projection $R^K_{\f ,k}:V^1_h\times V^2_h\rightarrow V^1_\f (\omega_k(K))\times V^2_\f (\omega_k(K))$ such that for all $(v_1,v_2)\in V^1_h\times V^2_h$, $R^K_{\f ,k}(v_1,v_2)=(R^{K,1}_{\f , k}v_1,R^{K,2}_{ \f ,k}v_1)$ fulfills 
\begin{alignat}{2}
(\sigma(R^{K,1}_{\f ,k}v_1):\varepsilon(w_1))_{\omega_k(K)}&= (\sigma(v_1):\varepsilon(w_1))_{K},& \quad &\forall  w_1 \in V^1_\f (\omega_k(K)),\label{Kfritz1}\\
(\kappa \nabla (R^{K,2}_{\f ,k}v_2), \nabla w_2)_{\omega_k(K)} &= (\kappa \nabla v_2, \nabla w_2)_K,&  &\forall w_2 \in V^2_\f (\omega_k(K)) \label{Kfritz2}.
\end{alignat} 
Note that if we replace $\omega_k(K)$ with $\Omega$ in \eqref{Kfritz1}-\eqref{Kfritz2} and denote the resulting projection $R^K_\f(v_1,v_2) =(R^{K,1}_\f v_1 ,R^{K,2}_\f v_2)$, then for all $(v_1,v_2)\in V^1_h\times V^2_h$ we have
\begin{align*}
R_\f (v_1,v_2)=\sum_{K\in \mathcal T_H}R^K_{\f }(v_1,v_2)=\sum_{K\in \mathcal T_H}(R^{K,1}_\f v_1,R^{K,2}_\f v_2).
\end{align*}
Motivated by this we now define the localized fine scale projection as
\begin{align}\label{loc-fritz}
R_{\f ,k}(v_1,v_2):=\sum_{K\in \mathcal T_H}R^K_{\f ,k}(v_1,v_2)=\sum_{K\in \mathcal T_H}(R^{K,1}_{\f ,k}v_1,R^{K,2}_{\f ,k}v_2),
\end{align}
and the localized multiscale spaces
\begin{align}\label{mskspace}
V^1_{\ms ,k}:=\{v_1-R^1_{\f ,k}v_1: v_1\in V^1_H\}, \quad V^2_{\ms ,k}:=\{v_2-R^2_{\f ,k}v_2: v_2\in V^2_H\},
\end{align}
with the corresponding localized basis
\begin{align}\label{mskbasis}
\{(\lambda^1_x-R^1_{\f ,k}\lambda_x,0),(0,\lambda^2_y-R^2_{\f ,k}\lambda_y):(x,y)\in \mathcal N\}.
\end{align}

\subsection{Stationary problem}
In this section we define a localized version of the stationary problem \eqref{msritz1}-\eqref{msritz2}. Let $R_{\ms,k} : V^1_h\times V^2_h \rightarrow V^1_{\ms,k} \times V^2_{\ms,k}$, such that for all $(v_1,v_2) \in V^1_h \times V^2_h$, $R_{\ms,k} (v_1,v_2)=(R^1_{\ms,k} (v_1,v_2),R^2_{\ms,k} v_2)$. The method now reads; find 
\begin{align*}
\tilde R^1_{\ms,k} (v_1,v_2) = R^1_{\ms,k} (v_1,v_2) + \sum_{K\in \mathcal T_H} \tilde R^K_{\f,k}v_2, \quad \text{where } \tilde R^K_{\f,k}v_2 \in V^1_\f(\omega_k(K)),
\end{align*}
and $R^2_{\ms,k}v_2$ such that
\begin{alignat}{2}
\begin{aligned}[b]
(\sigma(v_1 - \tilde R^1_{\ms,k} &(v_1,v_2)):\varepsilon(w_1))  \label{mskritz1}  \\
&- (\alpha(v_2-R^2_{\ms,k} v_2),\nabla\cdot w_1)\end{aligned}
&=0, & \quad  &\forall w_1 \in V^1_{\ms,k},  \\
(\kappa \nabla (v_2-R^2_{\ms,k} v_2), \nabla w_2)&=0, & &\forall w_2\in V^2_{\ms,k}.\label{mskritz2}\\
(\sigma(\tilde R^K_{\f,k}v_2):\varepsilon(w))-(\alpha R^2_{\ms,k}v_2, \nabla \cdot w)_K&=0,& &\forall w \in V^1_\f(w_k(K)). \label{locstationarycorr}
\end{alignat}
Note that the Ritz projection is stable in the sense that
\begin{align}\label{locmsstable}
\|\tilde R^1_{\ms,k}(v_1,v_2)\|_{H^1}\leq C(\|v_1\|_{H^1} + \|v_2\|_{H^1}), \quad \|R^2_{\ms,k}v_2\|_{H^1}\leq C\|v_2\|_{H^1}.
\end{align}

The following two lemmas give a bound on the error introduced by the localization. 
\begin{mylemma}\label{locexp}
	For all $(v_1,v_2)\in V^1_h\times V^2_h$, there exists $\xi \in (0,1)$, such that
	\begin{align}
	\|R^1_{\f,k}v_1 - R^1_\f v_1\|^2_{H^1} &\leq Ck^d\xi^{2k}\sum_{K\in \mathcal T_H}\|R^{K,1}_\f v_1\|^2_{H^1}, \label{locexp1} \\
	\|R^2_{\f,k}v_2 - R^2_\f v_2\|^2_{H^1} &\leq Ck^d\xi^{2k}\sum_{K\in \mathcal T_H}\|R^{K,2}_\f v_2\|^2_{H^1}, \label{locexp2} \\
	\|\tilde R_{\f,k} v_2- \tilde R_\f v_2\|^2_{H^1} &\leq Ck^d\xi^{2k}\sum_{K\in \mathcal T_H}\|\tilde R^K_\f v_2\|^2_{H^1}.\label{locexp3}
	\end{align}
\end{mylemma}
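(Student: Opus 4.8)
The plan is to follow the standard local-orthogonal-decomposition localization argument of \cite{Malqvist14,Henning14}, whose essential ingredient is the exponential decay of the global \emph{element} correctors away from the element $K$ that carries the source. All three estimates \eqref{locexp1}--\eqref{locexp3} concern correctors that solve an elliptic problem in the kernel $V^1_\f$ (resp.\ $V^2_\f$) with a symmetric coercive bilinear form --- $(\sigma(\cdot):\varepsilon(\cdot))$ for \eqref{locexp1} and \eqref{locexp3}, $(\kappa\nabla\cdot,\nabla\cdot)$ for \eqref{locexp2} --- and with a right-hand side functional that is supported on the single element $K$; the localized versions $R^{K,1}_{\f,k}$, $R^{K,2}_{\f,k}$, $\tilde R^K_{\f,k}$ are simply the Galerkin approximations of the same problems in the restricted spaces $V^1_\f(\omega_k(K))$ or $V^2_\f(\omega_k(K))$. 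Hence it suffices to prove the claim for a generic element corrector, and the three statements then differ only cosmetically.

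First I would establish the decay of the element corrector: there is $\xi\in(0,1)$, depending only on the shape regularity $\gamma$ and on the contrast constants in \eqref{sigma-bounds}--\eqref{kappa-bounds}, such that for $\phi=R^{K,1}_\f v_1$ one has
\[
\|\phi\|_{H^1(\Omega\setminus\omega_k(K))}\le C\xi^k\|\phi\|_{H^1}.
\]
The mechanism is an annular Caccioppoli-type estimate. Choose a cut-off $\eta$ that equals $0$ on $\omega_{k-1}(K)$ and $1$ outside $\omega_k(K)$, with $\|\nabla\eta\|_{L_\infty}\le C/H$ concentrated on the separating shell. Since $\eta^2\phi$ does not lie in $V^1_\f$, one tests the defining equation with its kernel-correction $w=\eta^2\phi-I^1_H(\eta^2\phi)\in V^1_\f$ and controls the commutator $I^1_H(\eta^2\phi)$ by the \emph{local} interpolation estimate \eqref{interpolation}. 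Because the source of $\phi$ lives on $K$, where $\eta$ vanishes, the right-hand side drops out on the annulus, and coercivity together with the product rule yields an inequality of the shape $\|\phi\|_{H^1(\Omega\setminus\omega_k(K))}\le \tilde C\,\|\phi\|_{H^1(\omega_k(K)\setminus\omega_{k-1}(K))}$; iterating over the $k$ annular shells converts the constant into the geometric factor $\xi^k$.

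Next, for each $K$ I would bound the per-element localization error $\|R^{K,1}_{\f,k}v_1-R^{K,1}_\f v_1\|_{H^1}\le C\xi^k\|R^{K,1}_\f v_1\|_{H^1}$. As both objects solve the same variational problem over nested spaces, their difference $d_K$ satisfies a Galerkin orthogonality, so its energy is controlled by the best approximation of $R^{K,1}_\f v_1$ from $V^1_\f(\omega_k(K))$; taking the quasi-interpolant of the truncated corrector $(1-\eta)R^{K,1}_\f v_1$ as comparison element reduces this best-approximation error, up to interpolation remainders, to the tail $\|R^{K,1}_\f v_1\|_{H^1(\Omega\setminus\omega_{k-1}(K))}$, which the previous step bounds by $\xi^k\|R^{K,1}_\f v_1\|_{H^1}$. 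For the assembly I write $R^1_{\f,k}v_1-R^1_\f v_1=\sum_{K}d_K$ with each $d_K$ supported in $\omega_k(K)$, and invoke the finite-overlap property of the patches: every coarse element meets at most $Ck^d$ of these supports, so a coloring/Cauchy--Schwarz argument gives $\|\sum_K d_K\|^2_{H^1}\le Ck^d\sum_K\|d_K\|^2_{H^1}$. Inserting the per-element bound produces \eqref{locexp1}; \eqref{locexp2} is verbatim with $(\kappa\nabla\cdot,\nabla\cdot)$ replacing $\sigma$, and \eqref{locexp3} is the same argument for $\tilde R_\f$, whose source $(\alpha R^2_\ms v_2,\nabla\cdot\,\cdot)_K$ is again element-supported.

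The main obstacle is the decay step. The kernel constraint forces the commutator term $I^1_H(\eta^2\phi)$ into the estimate, and the delicate point is to keep its contribution absorbed into the single annular shell $\omega_k(K)\setminus\omega_{k-1}(K)$ rather than spilling onto all of $\Omega$; only then does the iteration close with a genuine contraction factor $\xi<1$ independent of $H$ and of the oscillations of $\kappa,\mu,\lambda$. For \eqref{locexp3} one must additionally check that the coupling through $\alpha$ does not spoil the decay, i.e.\ that the element-supported source $(\alpha R^2_\ms v_2,\nabla\cdot\,\cdot)_K$ reintroduces no derivatives of $\alpha$ --- which is exactly why the correction $\tilde R_\f$ was posed in $V^1_\f$ in the first place.
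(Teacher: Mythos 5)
Your skeleton --- exponential decay of the element correctors, a C\'ea-type per-element localization bound, then a summation with an overlap factor --- is the same one used in \cite{HP16}, to which the paper simply refers (it omits the proof), and your decay step, your per-element estimate, and your observation that \eqref{locexp3} works identically because the source $(\alpha R^2_\ms v_2,\nabla\cdot\,\cdot)_K$ is element-supported and never differentiates $\alpha$, are all sound. The genuine gap is in the assembly step. You assert that each $d_K:=R^{K,1}_{\f,k}v_1-R^{K,1}_\f v_1$ is supported in $\omega_k(K)$ and then invoke the coloring/Cauchy--Schwarz inequality $\|\sum_K d_K\|^2_{H^1}\le Ck^d\sum_K\|d_K\|^2_{H^1}$. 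That support claim is false: only the localized corrector $R^{K,1}_{\f,k}v_1$ lies in $V^1_\f(\omega_k(K))$, while the global element corrector $R^{K,1}_\f v_1$ does not vanish outside the patch --- it merely decays there, which is exactly the content of your first step. For functions whose supports all overlap globally the coloring inequality is unavailable; one only gets $\|\sum_K d_K\|^2_{H^1}\le N\sum_K\|d_K\|^2_{H^1}$ with $N\sim H^{-d}$ terms, i.e.\ a factor $H^{-d}\xi^{2k}$ in place of $k^d\xi^{2k}$, which does not prove the lemma as stated (it is comparable only in the tuned regime $k\sim\log(1/H)$, whereas the lemma is claimed for all $k$ with a constant independent of $H$).

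The repair is not cosmetic, and it is the most delicate part of the proofs in \cite{HP16,Henning14,Malqvist14}. There one writes $e:=\sum_K d_K$, expands $\|e\|^2_\sigma=\sum_K(\sigma(d_K):\varepsilon(e))$, and for each $K$ splits the \emph{test function} $e$ (not $d_K$) by cutoffs into a near-field part supported on a patch around $K$ and a far-field part vanishing near $K$, both corrected by the interpolant to be admissible. The near-field pairing is bounded by $\|d_K\|_{H^1}\|e\|_{H^1(\omega_{k+1}(K))}$; for the far-field pairing one uses that the source of $R^{K,1}_\f v_1$ is supported on $K$ (so it annihilates test functions vanishing there) together with the patch support of $R^{K,1}_{\f,k}v_1$, so that only an annulus contributes, again against a \emph{local} norm of $e$. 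Because every term is paired with a local norm of $e$, Cauchy--Schwarz plus the $Ck^d$ overlap count yields $\|e\|^2_{H^1}\le Ck^{d/2}\xi^k\bigl(\sum_K\|R^{K,1}_\f v_1\|^2_{H^1}\bigr)^{1/2}\|e\|_{H^1}$, which gives \eqref{locexp1}. (The cruder fix of truncating each $d_K$ and summing the exponential tails costs an extra factor and only yields $(k+\log(1/H))^{d/2}\xi^k$, still short of the stated bound.) As written, your proposal therefore does not prove the lemma; the missing idea is the localization of the test function in the summation step.
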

The bounds \eqref{locexp1}-\eqref{locexp2} are direct results from \cite{HP16}, while \eqref{locexp3} follows by a slight modification of the right hand side. We omit the proof here. 

The next lemma gives a bound for the localized Ritz projection.

\begin{mylemma}\label{locstationaryconv}
	For all $(v_1,v_2)\in V^1_h\times V^2_h$ there exist $\xi \in (0,1)$ such that
	\begin{align}
	\label{locstationaryconv1}\|v_1 - \tilde R^1_{\ms,k}(v_1,v_2) \|_{H^1} &\leq C(H+k^{d/2}\xi^k)(\|\mathcal A_1(v_1,v_2)\|+ \|v_2\|_{H^1}), \\
	\|v_2 - R^2_{\ms,k} v_2\|_{H^1} &\leq C(H+k^{d/2}\xi^k)\|\mathcal A_2 v_2\|. \label{locstationaryconv2}
	\end{align}
\end{mylemma}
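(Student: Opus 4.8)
The plan is to bound the localization error by comparing the localized Ritz projection $\tilde R^1_{\ms,k}(v_1,v_2)$, $R^2_{\ms,k}v_2$ against the global (ideal) Ritz projection $\tilde R^1_{\ms}(v_1,v_2)$, $R^2_{\ms}v_2$ and then invoking Lemma~\ref{stationaryconv} together with a triangle inequality. Concretely, I would write
\begin{align*}
\|v_1 - \tilde R^1_{\ms,k}(v_1,v_2)\|_{H^1} \leq \|v_1 - \tilde R^1_{\ms}(v_1,v_2)\|_{H^1} + \|\tilde R^1_{\ms}(v_1,v_2) - \tilde R^1_{\ms,k}(v_1,v_2)\|_{H^1},
\end{align*}
and similarly for $v_2$. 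The first term on the right is already controlled by the $CH$-type bounds of Lemma~\ref{stationaryconv}, so the entire task reduces to bounding the second term (the discrepancy between the global and localized projections) by the exponentially decaying factor $Ck^{d/2}\xi^k$ times the appropriate data.

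For the temperature equation \eqref{locstationaryconv2}, the argument is essentially the one from \cite{Malqvist14,HP16}: since $R^2_{\ms}v_2 = v_2 - R^2_\f v_2$ and $R^2_{\ms,k}v_2 = v_2 - R^2_{\f,k}v_2$, the difference is exactly $R^2_{\f,k}v_2 - R^2_\f v_2$, which Lemma~\ref{locexp} bounds by $Ck^{d/2}\xi^k$ times $\big(\sum_K \|R^{K,2}_\f v_2\|^2_{H^1}\big)^{1/2}$. I would then bound the local Ritz energies by the global data using the stability of the element correctors and the identity relating the sum of local projections to the global one, obtaining a clean bound in terms of $\|\mathcal A_2 v_2\|$ (as in the proof of \eqref{stationaryconv2}). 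This step is routine given the cited decay estimates.

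The displacement estimate \eqref{locstationaryconv1} is the more delicate one, because $\tilde R^1_{\ms,k}$ carries two sources of localization error: the localization of the standard corrector $R^1_{\f,k}$ entering $R^1_{\ms,k}(v_1,v_2)$, and the localization of the \emph{extra} correction $\sum_K \tilde R^K_{\f,k}v_2$ that compensates the coupling term $(\alpha R^2_{\ms,k}v_2, \nabla\cdot w_1)$. I would handle these by testing the error equation (the difference of \eqref{msritz1} and \eqref{mskritz1}) against $e := \tilde R^1_{\ms}(v_1,v_2) - \tilde R^1_{\ms,k}(v_1,v_2)$, using coercivity \eqref{sigma-bounds} to get $c_\sigma\|e\|^2_{H^1} \leq (\sigma(e):\varepsilon(e))$, and then rewriting the right-hand side so that every term is either a corrector discrepancy controlled by \eqref{locexp1} and \eqref{locexp3}, or the coupling-term discrepancy $(\alpha(R^2_{\ms}v_2 - R^2_{\ms,k}v_2), \nabla\cdot e)$, which is controlled by \eqref{locstationaryconv2} already proved. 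Collecting these and absorbing $\|e\|_{H^1}$ gives the factor $Ck^{d/2}\xi^k$ times $(\|\mathcal A_1(v_1,v_2)\| + \|v_2\|_{H^1})$.

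The main obstacle I anticipate is the bookkeeping for the correction term $\tilde R_\f v_2$: unlike the standard Ritz corrector, its localized version $\sum_K \tilde R^K_{\f,k}v_2$ is driven by the \emph{localized} coupling $(\alpha R^2_{\ms,k}v_2, \nabla\cdot w)_K$ in \eqref{locstationarycorr}, whereas the global one in \eqref{stationarycorr} uses $R^2_{\ms}v_2$. Thus when comparing $\tilde R_\f v_2$ with $\tilde R_{\f,k}v_2$ I must separate the error coming from the exponential decay of the element correctors (Lemma~\ref{locexp}, eq.~\eqref{locexp3}) from the error propagated through the difference $R^2_{\ms}v_2 - R^2_{\ms,k}v_2$ in the right-hand-side data; the latter feeds back from \eqref{locstationaryconv2}, so the estimates must be carried out in the correct order (temperature first, then displacement) to close the argument cleanly. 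Finally, I would use the data bound $\|v_2 - R^2_\ms v_2\| \leq CH\|v_2\|_{H^1}$ from Lemma~\ref{stationaryconv} to convert the intermediate $\|v_2 - R^2_{\ms,k}v_2\|$ terms into the stated data $\|\mathcal A_1(v_1,v_2)\| + \|v_2\|_{H^1}$, absorbing the $k^{d/2}\xi^k$ against the additive $H$ where needed.
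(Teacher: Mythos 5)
Your overall architecture (prove the temperature estimate first, then feed it into the displacement estimate; control corrector discrepancies with Lemma~\ref{locexp}) matches the spirit of the paper, but the central mechanism you propose for \eqref{locstationaryconv1} has a genuine gap. You want to subtract \eqref{msritz1} from \eqref{mskritz1} and test the resulting ``error equation'' with $e := \tilde R^1_{\ms}(v_1,v_2) - \tilde R^1_{\ms,k}(v_1,v_2)$, using coercivity to obtain $c_\sigma\|e\|_{H^1}^2 \le (\sigma(e):\varepsilon(e))$. But \eqref{msritz1} holds only for test functions $w_1 \in V^1_{\ms}$ and \eqref{mskritz1} only for $w_1 \in V^1_{\ms,k}$; these spaces are not nested in one another, and $e$ --- which mixes components of $V^1_{\ms}$, $V^1_{\ms,k}$, and $V^1_{\f}$ --- lies in neither. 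So there is no variational identity you are permitted to test with $e$, and the coercivity step never gets off the ground. This is exactly the difficulty the paper's proof is built around: it never compares the two projections directly. Instead it works with the full error $e := v_1 - \tilde R^1_{\ms,k}(v_1,v_2)$, uses Galerkin orthogonality only within the localized space $V^1_{\ms,k}$ (a C\'ea-type quasi-optimality), and inserts the specific competitor $z = v_H - R^1_{\f,k}v_H \in V^1_{\ms,k}$, where $v_H \in V^1_H$ is the coarse generator of the \emph{ideal} projection, $R^1_{\ms}(v_1,v_2) = v_H - R^1_{\f}v_H$. This reduces the bound to the ideal error (Lemma~\ref{stationaryconv}) plus the corrector discrepancies $\|R^1_{\f}v_H - R^1_{\f,k}v_H\|_{H^1}$ and $\|\tilde R_{\f}v_2 - \tilde R_{\f,k}v_2\|_{H^1}$, which Lemma~\ref{locexp} controls. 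A further step you would also need --- and do not mention --- is converting $\|v_H\|_{H^1}$ back into data, which the paper does via $\|v_H\|_{H^1} = \|I_H(v_H - R^1_{\f}v_H)\|_{H^1} = \|I_H R^1_{\ms}(v_1,v_2)\|_{H^1} \le C\|R^1_{\ms}(v_1,v_2)\|_{H^1}$ together with the stability \eqref{msstable}.

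A second, smaller flaw: for \eqref{locstationaryconv2} you assert $R^2_{\ms,k}v_2 = v_2 - R^2_{\f,k}v_2$, so that the difference of the two projections is ``exactly'' $R^2_{\f,k}v_2 - R^2_{\f}v_2$. The identity $R^2_{\ms} = I - R^2_{\f}$ holds for the ideal projection because $v_2 - R^2_{\f}v_2 \in V^2_{\ms}$ and the orthogonality \eqref{orthog} holds; neither fact survives localization. Indeed, $v_2 - R^2_{\f,k}v_2 \notin V^2_{\ms,k}$ for general $v_2 \in V^2_h$ (the localized corrector does not reproduce fine-scale functions), and $V^2_{\ms,k}$ is not $\kappa$-orthogonal to $V^2_{\f}$. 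The estimate \eqref{locstationaryconv2} is nevertheless true --- the paper simply cites \cite{Henning14} for it --- but your derivation of it does not stand as written.
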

\begin{proof}
	It follows from \cite{Henning14} that \eqref{locstationaryconv2} holds. To prove \eqref{locstationaryconv1} we let $v_H\in V^1_H$ and $v_{H,k}\in V^1_H$ be elements such that
	\begin{align*}
	R^1_\ms(v_1,v_2) = v_H - R^1_\f v_H, \quad R^1_{\ms,k}(v_1,v_2) = v_{H,k} - R^1_{\f,k} v_{H,k}. 
	\end{align*}
	Define $e:= v_1-\tilde R^1_{\ms,k}(v_1,v_2)$. From \eqref{mskritz1}-\eqref{mskritz2} we get have the following identity for any $z\in V^1_{\ms,k}$
	\begin{align*}
	(\sigma(e):&\varepsilon(e)) - (\alpha(v_2-R^2_{\ms,k}v_2),\nabla\cdot e)
	\\&=(\sigma(e):\varepsilon(v_1 - z - \tilde R_{\f,k} v_1)) - (\alpha(v_2-R^2_{\ms,k}v_2),\nabla\cdot(v_1 - z- \tilde R_{\f,k} v_2)).
	\end{align*}
	Using this with $z=v_H - R^1_{\f,k}v_H \in V^1_{\ms,k}$ we get
	\begin{align*}
	c_\sigma \|e\|^2_{H^1} \leq (\sigma(e):\varepsilon(e)) &=  (\sigma(e):\varepsilon(v_1 -v_H - R^1_{\f,k}v_H- \tilde R_\f v_1)) \\&\quad- (\alpha(v_2-R^2_{\ms,k}v_2),\nabla\cdot(v_1 -v_H - R^1_{\f,k}v_H- \tilde R_{\f,k} v_2))\\
	&\quad +(\alpha(v_2-R^2_{\ms,k}v_2),\nabla\cdot e).
	\end{align*}
	Now, using Cauchy-Schwarz and Young's inequality we get
	\begin{align*}
	\|e\|^2_{H^1} &\leq C(\|v_1 -v_H - R^1_{\f,k}v_H- \tilde R_{\f,k} v_2\|^2_{H^1} +\|v_2-R^2_{\ms,k}v_2\|^2),
	\end{align*} 
	where the last term is bounded in \eqref{locstationaryconv2}. For the first term we get
	\begin{align*}
	&\|v_1 -v_H - R^1_{\f,k}v_H- \tilde R_{\f,k} v_2\|_{H^1} \\
	&\quad\leq \|v_1-(v_H - R^1_{\f}v_H + \tilde R_\f v_2)\|_{H^1} + \|R^1_\f v_H - R^1_{\f,k}v_H\|_{H^1} + \|\tilde R_\f v_2- \tilde R_{\f,k} v_2\|^2_{H^1}\\
	&\quad\leq \|v_1-\tilde R^1_\ms(v_1,v_2)\|_{H^1} + \|R^1_\f v_H - R^1_{\f,k}v_H\|_{H^1} + \|\tilde R_\f v_2- \tilde R_{\f,k} v_2\|_{H^1},
	\end{align*} 
	where the first term on the right hand side is bounded in Lemma~\ref{stationaryconv}. For the second term we use Lemma~\ref{locexp} to get
	\begin{align*}
	\|R^1_\f v_H - R^1_{\f,k}v_H\|^2_{H^1} &\leq Ck^d\xi^{2k}\sum_{K\in \mathcal T_H}\|R^{K,1}_\f v_H\|^2_{H^1} \leq Ck^d\xi^{2k}\sum_{K\in \mathcal T_H}\|v_H\|^2_{H^1(K)} \\&= Ck^d\xi^{2k}\|v_H\|^2_{H^1} = Ck^d\xi^{2k}\|I_H(v_H-R^1_\f v_H)\|^2_{H^1} \\&= Ck^d\xi^{2k}\|I_H R^1_\ms(v_1,v_2)\|^2_{H^1} \leq Ck^d\xi^{2k}\| R^1_\ms(v_1,v_2)\|^2_{H^1}.
	\end{align*}
	We can bound this further by using \eqref{msstable} and \eqref{A1operator}, such that
	\begin{align*}
	\|R^1_\ms(v_1,v_2)\|_{H^1}\leq C(\|v_1\|_{H^1} + \|v_2\|_{H^1}) \leq C(\|\mathcal A_1(v_1,v_2)\| + \|v_2\|_{H^1}). 
	\end{align*}
	
	Similar arguments, using Lemma~\ref{locexp} and \eqref{locstationarycorr}, prove 
	\begin{align*}
	\|\tilde R_\f v_2- \tilde R_{\f,k} v_2\|_{H^1} \leq Ck^{d/2}\xi^k\|v_2\|_{H^1},
	\end{align*} and \eqref{locstationaryconv1} follows.
\end{proof}

\begin{myremark}\label{remark-sizek}
	To preserve linear convergence, the localization parameter $k$ should be chosen such that $k=c \log(H^{-1})$ for some constant $c$. With this choice of $k$ we get $k^{d/2}\xi^k\sim H$ and we get linear convergence in Lemma~\ref{locstationaryconv}.
\end{myremark} 

We note that the orthogonality relation \eqref{orthog} does not hold when $V^1_{\ms}$ is replaced by $V^1_{\ms,k}$. However, we have that $V^1_{\ms,k}$ and $V^1_\f$ are almost orthogonal in the sense that
\begin{align}\label{almostorthog}
(\sigma(v):\varepsilon(w)) \leq Ck^{d/2}\xi^k\|v\|_{H^1}\|w\|_{H^1}, \quad \forall v\in V^1_{\ms,k}, \, w\in V^1_\f.
\end{align}
To prove this, note that $v=v_{H,k} - R^1_{\f,k}v_{H,k}$ for some $v_{H,k} \in V^1_H$, and 
\begin{align*}
(\sigma(v):\varepsilon(w)) &= (\sigma(v_{H,k} - R^1_\f v_{H,k}):\varepsilon(w)) + (\sigma(R^1_\f v_{H,k}- R^1_{\f,k} v_{H,k}):\varepsilon(w)) \\&= (\sigma(R^1_\f v_{H,k}- R^1_{\f,k} v_{H,k}):\varepsilon(w)) \leq C_\sigma\|R^1_\f v_{H,k}- R^1_{\f,k} v_{H,k}\|_{H^1}\|w\|_{H^1},
\end{align*}
where we have used that $v_{H,k}- R^1_\f v_{H,k} \in V^1_\ms$ and the orthogonality \eqref{orthog}. Due to Lemma~\ref{locexp} we now have 
\begin{align*}
\|R^1_\f v_{H,k}- R^1_{\f,k} v_{H,k}\|^2_{H^1} &\leq Ck^d\xi^{2k}\sum_{K\in \mathcal T_H}\|R^{K,1}_\f v_{H,k}\|^2_{H^1} \leq Ck^d\xi^{2k}\sum_{K\in \mathcal T_H}\|v_{H,k}\|^2_{H^1(K)} \\&= Ck^d\xi^{2k}\|v_{H,k}\|^2_{H^1} = Ck^d\xi^{2k}\|I_H(v_{H,k}-R^1_{\f,k} v_{H,k})\|^2_{H^1} \\&= Ck^d\xi^{2k}\|I_H v\|^2_{H^1} \leq Ck^d\xi^{2k}\|v\|^2_{H^1},
\end{align*}
and \eqref{almostorthog} follows.

\subsection{Time-dependent problem}
A localized version of \eqref{gfem1}-\eqref{timecorr} is now defined by replacing $V^1_\ms$ with $V^1_{\ms,k}$ and $V^2_\ms$ with $V^2_{\ms,k}$. The method thus reads; for $n\in \{1,...,N\}$ find \begin{align*}
\tilde u^n_{\ms,k}=u^n_{\ms,k} + \sum_{K\in \mathcal T_H}u^{n,K}_{\f,k}, \ \text{with } u^n_{\ms,k}\in V^1_{\ms,k}, \, u^{n,K}_{\f,k}\in V^1_\f(\omega_k(K)),
\end{align*}
and $\theta^n_{\ms,k} \in V^2_{\ms,k}$, such that
\begin{alignat}{2}
(\sigma(\tilde u^n_{\ms,k}):\varepsilon(v_1))- (\alpha \theta^n_{\ms,k}, \nabla \cdot v_1) &= (f^n,v_1),& \quad &\forall v_1 \in V^1_{\ms,k},\label{locgfem1}\\
\begin{aligned}[b](\ddt\theta^n_{\ms,k},v_2) + (\kappa \nabla &\theta^n_{\ms,k}, \nabla v_2) \\&+ (\alpha \nabla \cdot \ddt \tilde u^n_{\ms,k},v_2)\end{aligned}&= (g^n,v_2),& &\forall v_2 \in V^2_{\ms,k} \label{locgfem2},\\
(\sigma(u^{n,K}_{\f,k}):\varepsilon(w_1)) - (\alpha \theta^n_{\ms,k}, \nabla\cdot w_1)_K&=0,& &\forall w_1 \in V^1_\f(\omega_k(K)).\label{loctimecorr}
\end{alignat}
where $\theta^0_{\ms,k}=R^2_{\ms,k} \theta^0_h$. Furthermore, we define $\tilde u^0_{\ms,k}=u^0_{\ms,k} + \sum_{K\in \mathcal T_H}u^{0,K}_{\f,k}$, where $u^{0,K}_{\f,k}\in V^1_\f(\omega_k(K))$ is defined by \eqref{loctimecorr} for $n=0$ and $u^0_{\ms,k}\in V^1_\ms$ such that
\begin{align}\label{locu0ms}
(\sigma(\tilde u^0_{\ms,k}):\varepsilon(v_1)) - (\alpha \theta^0_{\ms,k}, \nabla\cdot v_1)=(f^0,v_1), \quad \forall v_1 \in V^1_{\ms,k}.
\end{align}
We also define $u^n_{\f,k}:=\sum_{K\in \mathcal T_H}u^{n,K}_{\f,k}$. Note that for $u^n_\f$ we have due to \eqref{timecorr}
\begin{align*}
(\sigma(u^n_\f):\varepsilon(w_1))-(\alpha \theta^n_\ms,\nabla\cdot w_1)=0, \quad \forall w_1 \in V^1_\f.
\end{align*}
For the localized version $u^n_{\f,k}$ this relation is not true. Instead, we prove the following lemma.
\begin{mylemma}\label{lemmacorr}
	For $w_1\in V^1_\f$, it holds that
	\begin{align*}
	|(\sigma(u^n_{\f,k}):\varepsilon(w_1))-(\alpha \theta^n_{\ms,k},\nabla\cdot w_1)|\leq Ck^{d/2}\xi^k\|\theta^n_{\ms,k}\|\|w_1\|_{H^1}.
	\end{align*}
\end{mylemma}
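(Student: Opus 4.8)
The plan is to read $u^n_{\f,k}$ as a \emph{localized} correction whose non-localized counterpart makes the whole expression vanish, so that the quantity to be estimated reduces to a pure localization error, which then decays exponentially via (a variant of) Lemma~\ref{locexp}.

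First I would introduce the ideal global correction $\tilde u_\f \in V^1_\f$ associated with the source $\theta^n_{\ms,k}$, defined by $(\sigma(\tilde u_\f):\varepsilon(w_1)) = (\alpha \theta^n_{\ms,k}, \nabla \cdot w_1)$ for all $w_1 \in V^1_\f$. By construction this is exactly the global version of the local problems \eqref{loctimecorr} defining the $u^{n,K}_{\f,k}$, obtained by replacing each patch $\omega_k(K)$ with $\Omega$ and summing the element sources $(\cdot\,)_K$ over $K\in\mathcal T_H$. Since $\tilde u_\f$ satisfies this identity exactly, for any $w_1\in V^1_\f$ we may write $(\sigma(u^n_{\f,k}):\varepsilon(w_1)) - (\alpha \theta^n_{\ms,k}, \nabla \cdot w_1) = (\sigma(u^n_{\f,k} - \tilde u_\f):\varepsilon(w_1))$, and hence by \eqref{sigma-bounds} and Cauchy--Schwarz the left-hand side is bounded by $C_\sigma\,\|u^n_{\f,k} - \tilde u_\f\|_{H^1}\,\|w_1\|_{H^1}$. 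It therefore remains to prove $\|u^n_{\f,k} - \tilde u_\f\|_{H^1}\leq Ck^{d/2}\xi^k\|\theta^n_{\ms,k}\|$.

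To control this difference I would introduce the element-wise corrections $S^K_\f \theta^n_{\ms,k}\in V^1_\f$ and $S^K_{\f,k}\theta^n_{\ms,k}=u^{n,K}_{\f,k}\in V^1_\f(\omega_k(K))$ solving the global and localized problems with source $(\alpha\theta^n_{\ms,k},\nabla\cdot w_1)_K$, so that $\tilde u_\f=\sum_K S^K_\f\theta^n_{\ms,k}$ and $u^n_{\f,k}=\sum_K S^K_{\f,k}\theta^n_{\ms,k}$. This is precisely the setting of Lemma~\ref{locexp}, the only difference being that the element source is now the coupling term $(\alpha\theta^n_{\ms,k},\nabla\cdot\,\cdot\,)_K$ rather than a $\sigma$-Ritz source; the cutoff-function decay argument from \cite{HP16,Henning14} then yields $\|u^n_{\f,k} - \tilde u_\f\|^2_{H^1}\leq Ck^d\xi^{2k}\sum_{K\in\mathcal T_H}\|S^K_\f\theta^n_{\ms,k}\|^2_{H^1}$. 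Testing the defining equation of $S^K_\f\theta^n_{\ms,k}$ with $w_1=S^K_\f\theta^n_{\ms,k}$ and using \eqref{sigma-bounds}, the bound $\|\nabla\cdot w_1\|_{L_2(K)}\leq C\|w_1\|_{H^1}$, and assumption~\ref{A2} gives $\|S^K_\f\theta^n_{\ms,k}\|_{H^1}\leq C\|\theta^n_{\ms,k}\|_{L_2(K)}$, whence $\sum_K\|S^K_\f\theta^n_{\ms,k}\|^2_{H^1}\leq C\|\theta^n_{\ms,k}\|^2$. Combining the three estimates yields the claim.

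The main obstacle is justifying the exponential-decay estimate for this particular correction operator, whose right-hand side is the lower-order coupling term $(\alpha\theta^n_{\ms,k},\nabla\cdot w_1)_K$ rather than an energy-projection source. One must verify that the standard LOD decay argument—choosing a cutoff function supported away from $K$, exploiting the quasi-locality of $I_H$ through \eqref{interpolation}, and iterating the decay across annular layers of patches—still goes through when the source is this $L_2$-type term supported on the single element $K$. This is the same \emph{slight modification of the right-hand side} already invoked for \eqref{locexp3} in Lemma~\ref{locexp}, and it is where all of the $k^{d/2}\xi^k$ behaviour originates; once it is in place, the coercivity and local-stability steps above are routine.
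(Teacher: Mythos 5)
Your proposal is correct and follows essentially the same route as the paper: the paper also introduces the global fine-scale correction (denoted $z^n_\f$ there, your $\tilde u_\f$), reduces the quantity to $(\sigma(u^n_{\f,k}-z^n_\f):\varepsilon(w_1))$ via Cauchy--Schwarz, and bounds $\|u^n_{\f,k}-z^n_\f\|_{H^1}\leq Ck^{d/2}\xi^k\|\theta^n_{\ms,k}\|$ by combining the exponential-decay estimate (cited as \cite[Lemma 4.4]{HP16}) with the element-wise stability bound $\|z^{n,K}_\f\|_{H^1}\leq C\|\theta^n_{\ms,k}\|_{L_2(K)}$. The decay step you flag as the main obstacle is exactly the point the paper delegates to the cited lemma from \cite{HP16}, so your argument matches the intended proof in both structure and substance.
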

\begin{proof}
	Note that from \eqref{loctimecorr} we have
	\begin{align}
	(\sigma(u^{n,K}_{\f,k}):\varepsilon(w_1)) - (\alpha \theta^n_{\ms,k}, \nabla\cdot w_1)_K=0, \quad \forall w_1 \in V^1_\f(\omega_k(K)).\label{lochelpcorr}
	\end{align}
	This equation can be viewed as the localization of the following problem. Find $z^n_\f\in V^1_\f$, such that 
	\begin{align}
	(\sigma(z^{n}_{\f}):\varepsilon(w_1)) - (\alpha \theta^n_{\ms,k}, \nabla\cdot w_1)=0, \quad \forall w_1 \in V^1_\f.\label{helpcorr}
	\end{align}
	Now, \cite[Lemma 4.4]{HP16} gives the bound
	\begin{align*}
	\|z^n_\f-u^n_{\f,k}\|^2_{H^1} \leq Ck^{d}\xi^{2k}\sum_{K\in\mathcal T_H}\|z^{n,K}_\f\|^2_{H^1}
	\end{align*}
	where $z^n_\f=\sum_{K \in \mathcal T_H}z^{n,K}_\f$ such that
	\begin{align*}
	(\sigma(z^{n,K}_{\f}):\varepsilon(w_1)) - (\alpha \theta^n_{\ms,k}, \nabla\cdot w_1)_K=0, \quad\forall w_1 \in V^1_\f.
	\end{align*}
	Using this we derive the bound
	\begin{align}
	\|z^n_\f-u^n_{\f,k}\|^2_{H^1}&\leq Ck^{d}\xi^{2k}\sum_{K\in\mathcal T_H}\|z^{n,K}_\f\|^2_{H^1}\leq Ck^{d}\xi^{2k}\sum_{K\in\mathcal T_H}\|\theta^n_{\ms,k}\|^2_{L_2(K)}\label{locbound}\\&=Ck^{d}\xi^{2k}\|\theta^n_{\ms,k}\|^2.\notag
	\end{align}
	Now, to prove the lemma we use \eqref{helpcorr} and Cauchy-Schwarz inequality to get
	\begin{align*}
	|(\sigma(u^n_{\f,k}):\varepsilon(w_1))-(\alpha \theta^n_{\ms,k},\nabla\cdot w_1)|&= |(\sigma(u^n_{\f,k}-z^n_\f):\varepsilon(w_1))| \\
	&\leq C_\sigma\|u^n_{\f,k}-z^n_\f\|_{H^1}\|w_1\|_{H^1}.
	\end{align*}
	Applying \eqref{locbound} finishes the proof.

\end{proof}
The proof can be modified slightly to show the following bound
\begin{align}\label{lemmacorr-ddt}
|(\sigma(\ddt u^n_{\f,k}):\varepsilon(w_1))-(\alpha \ddt\theta^n_{\ms,k},\nabla\cdot w_1)|\leq Ck^{d/2}\xi^k\|\ddt\theta^n_{\ms,k}\|\|w_1\|_{H^1}.
\end{align}
Also note that it follows, by choosing $w_1=u^n_{\f,k}$ and $w_1=\ddt u^n_{\f,k}$ respectively, that
\begin{align}\label{boundcorr}
\|u^n_{\f,k}\|_{H^1}\leq C\|\theta^n_{\ms,k}\|,\quad \|\ddt u^n_{\f,k}\|_{H^1}\leq C\|\ddt \theta^n_{\ms,k}\|.
\end{align}

To prove that \eqref{locgfem1}-\eqref{loctimecorr} is well posed, we need the following condition on the size of $H$. 
\begin{myassump} We make the following assumption on the size of $H$.
	\begin{enumerate}[label=(A\arabic*)] 
		\setcounter{enumi}{3}
		\item $H \leq \min\bigg(\frac{1}{4C_\textrm{co}},\frac{c_\sigma}{(C_\textrm{co}+C_\text{ort})}\bigg)$, where $C_\textrm{co}$ is the constant in Lemma~\mbox{\ref{lemmacorr}} and $C_\text{ort}$ is the constant in the almost orthogonal property \eqref{almostorthog}. \label{H-bound}
	\end{enumerate}
\end{myassump}

\begin{mylemma}
	Assuming \ref{H-bound} the problem \eqref{locgfem1}-\eqref{loctimecorr} is well-posed.
\end{mylemma}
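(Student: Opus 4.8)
The plan is to mimic the energy argument used for Lemma~\ref{wellposed}, but to carry along the two new error terms created by the localization and to absorb them using the smallness of $H$ in \ref{H-bound}. First I would observe that, given the data at time $t_{n-1}$, the equations \eqref{locgfem1}--\eqref{loctimecorr} constitute a square linear system for the unknowns $u^n_{\ms,k}$, $\theta^n_{\ms,k}$ and $\{u^{n,K}_{\f,k}\}_{K\in\mathcal T_H}$, so it suffices to prove that the corresponding homogeneous system (with $f^n=g^n=0$ and $u^{n-1}_{\ms,k}=\theta^{n-1}_{\ms,k}=u^{n-1}_{\f,k}=0$) admits only the trivial solution. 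For this homogeneous problem one has $\ddt\theta^n_{\ms,k}=\theta^n_{\ms,k}/\tau$ and $\ddt\tilde u^n_{\ms,k}=\tilde u^n_{\ms,k}/\tau$.

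Next I would test \eqref{locgfem1} with $v_1=u^n_{\ms,k}$ and \eqref{locgfem2} with $v_2=\tau\theta^n_{\ms,k}$ and add the results. As in Lemma~\ref{wellposed}, the symmetric displacement--temperature coupling on the coarse space cancels, since $(\alpha\theta^n_{\ms,k},\nabla\cdot u^n_{\ms,k})=(\alpha\nabla\cdot u^n_{\ms,k},\theta^n_{\ms,k})$, leaving the identity
\begin{align*}
\|u^n_{\ms,k}\|^2_\sigma + \|\theta^n_{\ms,k}\|^2 + \tau\|\theta^n_{\ms,k}\|^2_\kappa + (\sigma(u^n_{\f,k}):\varepsilon(u^n_{\ms,k})) + (\alpha\nabla\cdot u^n_{\f,k},\theta^n_{\ms,k})=0.
\end{align*}
The two final terms are exactly the ones that vanished in the non-localized proof: the cross term $(\sigma(u^n_{\f,k}):\varepsilon(u^n_{\ms,k}))$ was zero by the orthogonality \eqref{orthog}, and the fine-scale coupling $(\alpha\nabla\cdot u^n_{\f,k},\theta^n_{\ms,k})$ equalled $\|u^n_{\f,k}\|^2_\sigma$ by the exact correction \eqref{timecorr}. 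In the localized setting I would instead bound the cross term by the almost-orthogonality \eqref{almostorthog}, picking up a factor $C_{\mathrm{ort}}k^{d/2}\xi^k$, and rewrite the coupling term using Lemma~\ref{lemmacorr} with $w_1=u^n_{\f,k}$, so that $(\alpha\nabla\cdot u^n_{\f,k},\theta^n_{\ms,k})\geq \|u^n_{\f,k}\|^2_\sigma - C_{\mathrm{co}}k^{d/2}\xi^k\|\theta^n_{\ms,k}\|\|u^n_{\f,k}\|_{H^1}$.

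Substituting these two estimates, applying \eqref{sigma-bounds} and \eqref{kappa-bounds}, splitting the mixed products with Young's inequality, and using $k^{d/2}\xi^k\leq H$ (cf. Remark~\ref{remark-sizek}), I would arrive at an inequality of the form
\begin{align*}
c_\sigma\|u^n_{\ms,k}\|^2_{H^1} + \|\theta^n_{\ms,k}\|^2 + c_\sigma\|u^n_{\f,k}\|^2_{H^1} \leq \tfrac{H}{2}\big(C_{\mathrm{ort}}\|u^n_{\ms,k}\|^2_{H^1} + C_{\mathrm{co}}\|\theta^n_{\ms,k}\|^2 + (C_{\mathrm{co}}+C_{\mathrm{ort}})\|u^n_{\f,k}\|^2_{H^1}\big).
\end{align*}
The thresholds in \ref{H-bound} then guarantee that each coefficient on the right is strictly smaller than its counterpart on the left, so all three coercive terms must vanish; hence $u^n_{\ms,k}=\theta^n_{\ms,k}=0$ and $u^n_{\f,k}=0$, and by \eqref{loctimecorr} each $u^{n,K}_{\f,k}=0$ as well. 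Since the system is square, uniqueness yields existence, proving well-posedness.

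I expect the main obstacle to be the bookkeeping in the middle step: unlike Lemma~\ref{wellposed}, neither the cross term nor the fine-scale coupling vanishes, so one must simultaneously control the localization defect of the correction via Lemma~\ref{lemmacorr} and the loss of orthogonality via \eqref{almostorthog}, and then check that the precise constants $C_{\mathrm{co}}$ and $C_{\mathrm{ort}}$ appearing in \ref{H-bound} are exactly what is needed to absorb both error contributions into the coercive left-hand side without circularity. The auxiliary stability bound \eqref{boundcorr} is available as a fallback should the direct absorption require replacing $\|u^n_{\f,k}\|_{H^1}$ by $\|\theta^n_{\ms,k}\|$.
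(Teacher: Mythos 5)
Your proposal is correct and follows essentially the same route as the paper: reduce to uniqueness for the square linear system, test with $v_1=u^n_{\ms,k}-u^{n-1}_{\ms,k}$ and $v_2=\tau\theta^n_{\ms,k}$, control the non-vanishing cross term via the almost-orthogonality \eqref{almostorthog} and the fine-scale coupling via Lemma~\ref{lemmacorr}, and absorb both $k^{d/2}\xi^k\sim H$ contributions using \ref{H-bound}. Your only deviation is to pass to the homogeneous system (zero data and zero previous step) rather than deriving the paper's a priori bound, which is an equivalent formulation of uniqueness for a linear square system and in fact cleans up the bookkeeping for $f^n$, $g^n$, and the $t_{n-1}$ terms.
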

\begin{proof}
	This proof is similar the proof of Lemma~\ref{wellposed}, but we need to account for the lack of orthogonality and the fact that \eqref{timecorr} is not satisfied. 
	
	Given $u^{n-1}_{\ms,k}$, $\theta^{n-1}_{\ms,k}$, and $u^{n-1}_{\f,k}=\sum_{K}u^{n-1,K}_{\f,k}$, the equations \eqref{locgfem1}-\eqref{loctimecorr} yields a square system, so it is sufficient to prove that the solution is unique. Choosing $v_1=u^n_{\ms,k}-u^{n-1}_{\ms,k}$ in \eqref{locgfem1} and $v_2=\tau\theta^n_{\ms,k}$ in \eqref{locgfem2} and adding the resulting equations we get 
	\begin{align*}
	(\sigma(u^n_{\ms,k}):\varepsilon(u^n_{\ms,k}-u^{n-1}_{\ms,k})) &+ (\sigma(u^n_{\f,k}):\varepsilon(u^n_{\ms,k}-u^{n-1}_{\ms,k}))+ \tau(\ddt \theta^n_{\ms,k},\theta^n_{\ms,k}) \\ &+ c_\kappa\tau\|\theta^n_{\ms,k}\|^2_{H^1} + (\alpha \nabla\cdot (u^n_{\f,k}-u^{n-1}_{\f,k}), \theta^n_{\ms,k})
	\\&\leq  (f^n,u^n_{\ms,k}-u^{n-1}_{\ms,k}) + \tau(g^n,\theta^n_{\ms,k}).
	\end{align*}
	Now, using \eqref{symmetry} and 
	\begin{align*}
	(f^n,u^n_{\ms,k}-u^{n-1}_{\ms,k}) \leq C\|f^n\|_{H^{-1}} + \frac{1}{2}(\sigma(u^n_{\ms,k}-u^{n-1}_{\ms,k}):\epsilon(u^n_{\ms,k}-u^{n-1}_{\ms,k})).
	\end{align*}
	together with the estimate $\tau(\ddt \theta^n_{\ms,k},\theta^n_{\ms,k})\geq \frac{1}{2}\|\theta^n_{\ms,k}\|^2-\frac{1}{2}\|\theta^{n-1}_{\ms,k}\|^2$, gives
	\begin{align*}
	\frac{c_\sigma}{2}\|u^n_{\ms,k}\|^2_{H^1} &+ \frac{1}{4}\|\theta^n_{\ms,k}\|^2 +  c_\kappa\tau\|\theta^n_{\ms,k}\|^2_{H^1} + (\sigma(u^n_{\f,k}):\epsilon(u^n_{\ms,k})) + (\alpha \nabla\cdot u^n_{\f,k}, \theta^n_{\ms,k}) \\
	&\leq  C\|f^n\|^2_{H^{-1}} + \frac{\tau}{2}\|g^n\|^2 + \frac{C_\sigma}{2}\|u^{n-1}_{\ms,k}\|^2_{H^1} + \frac{1}{2}\|\theta^{n-1}_{\ms,k}\|^2 \\ &\qquad + (\sigma(\tilde u^n_{\f,k}):\varepsilon(u^{n-1}_{\ms,k})) + (\alpha \nabla\cdot u^{n-1}_{\f,k}, \theta^n_{\ms,k}).
	\end{align*}
	Using Lemma~\ref{lemmacorr} we have
	\begin{align*}
	(\alpha \nabla\cdot u^n_{\f,k}, \theta^n_{\ms,k}) &= (\alpha \theta^n_{\ms,k}, \nabla\cdot u^n_{\f,k}) - (\sigma(u^n_{\f,k}):\epsilon(u^n_{\f,k})) + (\sigma(u^n_{\f,k}):\epsilon(u^n_{\f,k}))\\
	&\geq -|(\alpha \theta^n_{\ms,k}, \nabla\cdot u^n_{\f,k}) - (\sigma(u^n_{\f,k}):\epsilon(u^n_{\f,k}))| + c_\sigma\|u^n_{\f,k}\|^2_{H^1}\\
	&\geq -C_\textrm{co} k^{d/2}\xi^k\|u^n_{\f,k}\|_{H^1}\|\theta^n_{\ms,k}\|+ c_\sigma\|u^n_{\f,k}\|^2_{H^1}.
	\end{align*}
	and the almost orthogonal property \eqref{almostorthog} gives
	\begin{align*}
	|(\sigma(u^n_{\f,k}):\varepsilon(u^n_{\ms,k}))| \geq -C_\text{ort}k^{d/2}\xi^k\|u^n_{\f,k}\|_{H^1}\|u^n_{\ms,k}\|_{H^1}.
	\end{align*}
	Now, using that $k$ should be chosen such that linear convergence is obtained, see Remark~\ref{remark-sizek}, that is  $k^{d/2}\xi^k \sim H$, we conclude after using Young's inequality that
	\begin{align*}
	&(\frac{c_\sigma}{2}-\frac{C_\text{ort}H}{2})\|u^n_{\ms,k}\|^2_{H^1} + (\frac{1}{8}-\frac{C_\textrm{co}H}{2})\|\theta^n_{\ms,k}\|^2 +  c_\kappa\|\theta^n_{\ms,k}\|^2_{H^1} \\&\qquad+ (c_\sigma - \frac{(C_\textrm{co}+C_\text{ort})H}{2})\|u^n_{\f,k}\|^2_{H^1} \\
	&\quad \leq  C(\|f^n\|^2_{H^{-1}} + \tau\|g^n\|^2 + \|u^{n-1}_{\ms,k}\|^2_{H^1} + \|\theta^{n-1}_{\ms,k}\|^2 + \|u^{n-1}_{\f,k}\|^2_{H^1}),
	\end{align*}
	where assumption \ref{H-bound} guarantees that the coefficients are positive. Hence, a unique solution exists.
\end{proof}

\section{Error analysis}\label{sec:error}  
In this section we analyze the error of the generalized finite element method. The results are based on assumption \ref{H-bound}. In the analysis we utilize the following property, which is similar to Lemma~\ref{lemmacorr}.
\begin{mylemma}\label{lemmacorreta}
	Let $\tilde e^n_{\f,k}:=\tilde R_{\f,k}\theta^n_h-u^n_{\f,k}$ and $\eta^n_\theta:=R^2_{\ms,k}\theta^n_h-\theta^n_{\ms,k}$. Then, for $w_1\in V^1_\f$, it holds that
	\begin{align*}
	|(\sigma(\tilde e^n_{\f,k}):\varepsilon(w_1))-(\alpha \eta^n_\theta,\nabla\cdot w_1)|\leq Ck^{d/2}\xi^k\|\eta^n_\theta\|\|w_1\|_{H^1}.
	\end{align*}
\end{mylemma}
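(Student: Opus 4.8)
The plan is to mirror the proof of Lemma~\ref{lemmacorr} almost verbatim, because the quantity $\tilde e^n_{\f,k}$ solves exactly the same local correction problem as $u^n_{\f,k}$, but with the multiscale temperature $\theta^n_{\ms,k}$ replaced by the error $\eta^n_\theta$. First I would subtract the two defining equations. Writing $\tilde e^n_{\f,k}=\sum_{K\in\mathcal T_H}(\tilde R^K_{\f,k}\theta^n_h-u^{n,K}_{\f,k})$ and subtracting \eqref{loctimecorr} from \eqref{locstationarycorr} applied to $v_2=\theta^n_h$, each elementwise contribution $e^{n,K}_{\f,k}:=\tilde R^K_{\f,k}\theta^n_h-u^{n,K}_{\f,k}\in V^1_\f(\omega_k(K))$ satisfies
\begin{align*}
(\sigma(e^{n,K}_{\f,k}):\varepsilon(w))-(\alpha\eta^n_\theta,\nabla\cdot w)_K=0,\quad\forall w\in V^1_\f(\omega_k(K)),
\end{align*}
where I have used that $\eta^n_\theta=R^2_{\ms,k}\theta^n_h-\theta^n_{\ms,k}$. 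This is precisely the local problem \eqref{loctimecorr} with $\theta^n_{\ms,k}$ exchanged for $\eta^n_\theta$.

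Next I would introduce the global (unlocalized) auxiliary problem: find $z^n_\f\in V^1_\f$ such that $(\sigma(z^n_\f):\varepsilon(w_1))-(\alpha\eta^n_\theta,\nabla\cdot w_1)=0$ for all $w_1\in V^1_\f$, together with its elementwise pieces $z^{n,K}_\f\in V^1_\f$ obtained by restricting the right-hand side to $K$. By construction $\tilde e^n_{\f,k}$ is the patchwise localization of $z^n_\f$, so \cite[Lemma 4.4]{HP16} applies unchanged and yields
\begin{align*}
\|z^n_\f-\tilde e^n_{\f,k}\|^2_{H^1}\leq Ck^d\xi^{2k}\sum_{K\in\mathcal T_H}\|z^{n,K}_\f\|^2_{H^1}.
\end{align*}
Testing the elementwise problem with $w_1=z^{n,K}_\f$ and using \eqref{sigma-bounds} gives the local bound $\|z^{n,K}_\f\|_{H^1}\leq C\|\eta^n_\theta\|_{L_2(K)}$; squaring and summing over $K$ collapses the sum to $\|\eta^n_\theta\|^2$, producing $\|z^n_\f-\tilde e^n_{\f,k}\|^2_{H^1}\leq Ck^d\xi^{2k}\|\eta^n_\theta\|^2$.

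Finally, I would use the global problem to rewrite the quantity of interest as a single energy inner product: for $w_1\in V^1_\f$,
\begin{align*}
(\sigma(\tilde e^n_{\f,k}):\varepsilon(w_1))-(\alpha\eta^n_\theta,\nabla\cdot w_1)=(\sigma(\tilde e^n_{\f,k}-z^n_\f):\varepsilon(w_1)),
\end{align*}
after which Cauchy--Schwarz together with \eqref{sigma-bounds} and the previous estimate delivers the claimed bound $Ck^{d/2}\xi^k\|\eta^n_\theta\|\|w_1\|_{H^1}$. Since every step is a structural copy of Lemma~\ref{lemmacorr}, I expect no genuine obstacle; the only point requiring care is verifying that subtracting \eqref{locstationarycorr} and \eqref{loctimecorr} really produces the same localized correction problem with $\eta^n_\theta$ on the right-hand side, so that the localization estimate of \cite{HP16} can be invoked without modification.
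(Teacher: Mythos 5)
Your proposal is correct and follows exactly the route the paper intends: the paper's own proof of this lemma simply states that it is "similar to the proof of Lemma~\ref{lemmacorr}" and omits the details, and your argument is precisely that adaptation — subtracting \eqref{loctimecorr} from \eqref{locstationarycorr} to identify $\tilde e^n_{\f,k}$ as the patchwise localization of the global fine-scale problem with data $\eta^n_\theta$, invoking the localization estimate of \cite{HP16}, bounding the elementwise pieces by $\|\eta^n_\theta\|_{L_2(K)}$, and concluding via the global problem and Cauchy--Schwarz. The one step genuinely requiring care (that the subtraction yields the same localized correction problem with $\eta^n_\theta$ on the right-hand side) is verified correctly in your write-up.
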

\begin{proof}
	The proof is similar to the proof of Lemma~\ref{lemmacorr}. We omit the details.
\end{proof}
This can be modified slightly to show the following bound
\begin{align}\label{lemmacorreta-ddt}
|(\sigma(\ddt \tilde e^n_{\f,k}):\varepsilon(w_1))-(\alpha \ddt\eta^n_\theta,\nabla\cdot w_1)|\leq Ck^{d/2}\xi^k\|\ddt\eta^n_\theta\|\|w_1\|_{H^1}.
\end{align}
Also note that it follows, by choosing $w_1=\tilde e^n_{\f,k}$ and $w_1=\ddt \tilde e^n_{\f,k}$ respectively, that
\begin{align}\label{boundcorreta}
\|\tilde e^n_{\f,k}\|_{H^1}\leq C\|\eta^n_\theta\|,\quad \|\ddt\tilde e^n_{\f,k}\|_{H^1}\leq C\|\ddt\eta^n_\theta\|.
\end{align}

\begin{mythm}\label{gfemerror}
	Assume that \ref{H-bound} holds. Let $\{u^n_h\}_{n=1}^N$ and $\{\theta^n_h\}_{n=1}^N$ be the solutions to \eqref{fem1}-\eqref{fem2} and $\{\tilde u^n_{\ms,k}\}_{n=1}^N$ and $\{\theta^n_{\ms,k}\}_{n=1}^N$ the solutions to \eqref{locgfem1}-\eqref{loctimecorr}. For $n \in \{1,...,N\}$ we have
	\begin{align*}
	\|u^n_h-\tilde u^n_{\ms,k}\|_{H^1} + \|\theta^n_h-\theta^n_{\ms,k}\|_{H^1}  &\leq C(H + k^{d/2}\xi^k)\big(\|g\|_{L_\infty(L_2)} +\|\dot g\|_{L_\infty(H^{-1})} \\&\quad+\|f\|_{L_\infty(L_2)} + \|\dot f\|_{L_\infty(L_2)} + \|\ddot f\|_{L_\infty(H^{-1})} \\&\quad+ t_n^{-1/2}\|\theta^0_h\|_{H^1}\big).
	\end{align*}
\end{mythm}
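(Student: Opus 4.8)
The plan is to estimate the error by introducing the localized Ritz projections $R^2_{\ms,k}\theta^n_h$ and $\tilde R^1_{\ms,k}(u^n_h,\theta^n_h)$ as intermediate quantities, thereby splitting the error into a projection part (controlled by Lemma~\ref{locstationaryconv}) and a discrete part which I denote
\begin{align*}
\eta^n_u := \tilde R^1_{\ms,k}(u^n_h,\theta^n_h) - \tilde u^n_{\ms,k}, \qquad \eta^n_\theta := R^2_{\ms,k}\theta^n_h - \theta^n_{\ms,k}.
\end{align*}
The projection part is handled immediately: by Lemma~\ref{locstationaryconv} with $\mathcal A_1$, $\mathcal A_2$ applied to $(u^n_h,\theta^n_h)$, together with the identities \eqref{A1operator}, \eqref{A2operator} and the finite element equations \eqref{fem1}, \eqref{fem2}, one sees that $\|\mathcal A_1(u^n_h,\theta^n_h)\|$ and $\|\mathcal A_2\theta^n_h\|$ reduce to discrete norms of $f^n$, $g^n$, $\ddt\theta^n_h$ and $\ddt u^n_h$, which are exactly the quantities bounded by the regularity estimates \eqref{femreg1}--\eqref{femreg3} of Theorem~\ref{femreg}. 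This produces the factor $(H+k^{d/2}\xi^k)$ times the right-hand side data norms, where the $t_n^{-1/2}\|\theta^0_h\|_{H^1}$ contribution comes from the smoothing estimate \eqref{femreg3} applied to the $f=g=0$ part after splitting the data.

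The main work is estimating $\eta^n_u$ and $\eta^n_\theta$. First I would derive an error equation by subtracting the localized method \eqref{locgfem1}--\eqref{loctimecorr} from the defining relations of the projections \eqref{mskritz1}--\eqref{locstationarycorr} evaluated along the finite element solution. Testing the elasticity error equation with $v_1=\ddt\eta^n_u$ and the heat error equation with $v_2=\eta^n_\theta$, then adding, I expect the principal coupling terms to cancel as they do in the proof of Lemma~\ref{wellposed}. The correction terms, however, no longer cancel exactly because the localized correction $u^n_{\f,k}$ does not satisfy \eqref{timecorr} exactly; this is precisely where Lemma~\ref{lemmacorreta} and its time-difference analogue \eqref{lemmacorreta-ddt} enter, bounding the defect by $Ck^{d/2}\xi^k\|\eta^n_\theta\|\|w_1\|_{H^1}$ (and the $\ddt$ version). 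Using $\tilde e^n_{\f,k}=\tilde R_{\f,k}\theta^n_h-u^n_{\f,k}$ together with \eqref{boundcorreta}, these defect contributions are absorbed under assumption \ref{H-bound}, which guarantees the relevant coefficients stay positive.

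After testing and using Cauchy--Schwarz and Young's inequality, I expect to arrive at a discrete Gr\"onwall-type recursion of the form
\begin{align*}
c\|\eta^n_\theta\|^2 + c\,\tau\|\eta^n_\theta\|^2_{H^1} + c\|\eta^n_u\|^2_{H^1} \leq \|\eta^{n-1}_\theta\|^2 + C\tau\big(\text{data and projection residuals}\big),
\end{align*}
where the residuals are the time derivatives of the projection errors $\ddt(R^2_{\ms,k}\theta^n_h-\theta^n_h)$ and the corresponding displacement residual, each of which is again bounded by $(H+k^{d/2}\xi^k)$ times a regularity norm via Lemma~\ref{locstationaryconv} applied to the difference quotients $\ddt\theta^n_h$, $\ddt u^n_h$. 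Summing over $n$ and invoking Theorem~\ref{femreg} closes the estimate; note that $\eta^0_\theta=0$ since $\theta^0_{\ms,k}=R^2_{\ms,k}\theta^0_h$ by construction.

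The hard part will be bookkeeping the correction defects consistently. Because the displacement equation \eqref{locgfem1} is tested with the time difference $\ddt\eta^n_u$ rather than $\eta^n_u$ itself, the symmetry identity \eqref{symmetry} must be used to telescope $(\sigma(\eta^n_u):\varepsilon(\ddt\eta^n_u))$ into energy differences, and the defect from Lemma~\ref{lemmacorreta} appears in both the $n$ and the $\ddt$ (time-differenced) forms; keeping the powers of $k^{d/2}\xi^k$ matched to $H$ under \ref{H-bound}, so that all absorbed terms retain a clean $(H+k^{d/2}\xi^k)$ prefactor without losing a power, is the delicate accounting. I would also need the time-differenced error equation (analogous to \eqref{udiff1}--\eqref{udiff2}) to bound $\ddt\eta^n_\theta$ and recover the $H^1$-in-time control on $\theta$, mirroring the structure already established in the proof of Theorem~\ref{femreg}.
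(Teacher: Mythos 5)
Your proposal is correct and follows essentially the same route as the paper: the paper also splits the solution by linearity into a zero-initial-data part and a zero-source part (handled by Lemma~\ref{gfemerror1} and Lemma~\ref{gfemerror2}, the latter exploiting the smoothing estimate \eqref{femreg3}), and each lemma is proved with exactly the decomposition into projection errors $\rho$ (bounded via Lemma~\ref{locstationaryconv} and Theorem~\ref{femreg}) and discrete errors $\eta$ (bounded by the energy arguments you describe, testing with $\ddt\eta^n_u$, $\eta^n_\theta$ and then with $\ddt\eta^n_\theta$, with the localization defects controlled by Lemma~\ref{lemmacorreta}, \eqref{lemmacorreta-ddt}, the almost-orthogonality \eqref{almostorthog}, and assumption \ref{H-bound}). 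The only difference is organizational: the paper isolates the two data regimes as separate lemmas before the theorem, while you fold the splitting into a single argument.
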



The proof of Theorem~\ref{gfemerror} is based on two lemmas.
%

\begin{mylemma}\label{gfemerror1}
	Assume that $\theta^0_h=0$ and \ref{H-bound} holds. Let $\{u^n_h\}_{n=1}^N$ and $\{\theta^n_h\}_{n=1}^N$ be the solutions to \eqref{fem1}-\eqref{fem2} and $\{\tilde u^n_{\ms,k}\}_{n=1}^N$ and $\{\theta^n_{\ms,k}\}_{n=1}^N$ the solutions to \eqref{locgfem1}-\eqref{loctimecorr}. For $n \in \{1,...,N\}$ we have
	\begin{align*}
	\|u^n_h-\tilde u^n_{\ms,k}\|_{H^1} + \|\theta^n_h-\theta^n_{\ms,k}\|_{H^1} &\leq C(H+k^{d/2}\xi^k)\big(\|g\|_{L_\infty(L_2)} + \|\dot g\|_{L_\infty(H^{-1})} \\&\quad+ \|f\|_{L_\infty(L_2)} + \|\dot f\|_{L_\infty(L_2)} + \|\ddot f\|_{L_\infty(H^{-1})}\big).
	\end{align*}
\end{mylemma}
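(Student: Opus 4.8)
The plan is to split both errors through the localized Ritz projections of Section~\ref{sec:loc} and then bound the remaining ``discrete'' errors by a discrete energy argument that mirrors the regularity proof of Theorem~\ref{femreg}. I would write
\[
u^n_h-\tilde u^n_{\ms,k} = \rho^n_u + \eta^n_u + \tilde e^n_{\f,k}, \qquad \theta^n_h-\theta^n_{\ms,k}=\rho^n_\theta+\eta^n_\theta,
\]
with projection errors $\rho^n_u:=u^n_h-\tilde R^1_{\ms,k}(u^n_h,\theta^n_h)$, $\rho^n_\theta:=\theta^n_h-R^2_{\ms,k}\theta^n_h$, discrete errors $\eta^n_u:=R^1_{\ms,k}(u^n_h,\theta^n_h)-u^n_{\ms,k}\in V^1_{\ms,k}$ and $\eta^n_\theta$ as in Lemma~\ref{lemmacorreta}, and $\tilde e^n_{\f,k}\in V^1_\f$ the correction difference from Lemma~\ref{lemmacorreta}. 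The projection errors are controlled directly by Lemma~\ref{locstationaryconv}: the reference equations \eqref{fem1}-\eqref{fem2} give $\|\mathcal A_1(u^n_h,\theta^n_h)\|\le\|f^n\|$ and $\|\mathcal A_2\theta^n_h\|\le\|g^n\|+\|\ddt\theta^n_h\|+C\|\ddt u^n_h\|_{H^1}$, so \eqref{locstationaryconv1}-\eqref{locstationaryconv2} together with the regularity bounds \eqref{femreg1}-\eqref{femreg2} (available since $\theta^0_h=0$) yield $\|\rho^n_u\|_{H^1}+\|\rho^n_\theta\|_{H^1}\leq C(H+k^{d/2}\xi^k)(\text{data})$ with exactly the data norms of the statement.

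For the discrete errors I would first derive their equations. Subtracting \eqref{locgfem1} from \eqref{fem1} and testing over $V^1_{\ms,k}$, the relation \eqref{mskritz1} cancels the $\rho$-contributions and leaves, for all $v_1\in V^1_{\ms,k}$,
\[
(\sigma(\eta^n_u+\tilde e^n_{\f,k}):\varepsilon(v_1)) - (\alpha\eta^n_\theta,\nabla\cdot v_1)=0.
\]
Combining this with the almost-orthogonality \eqref{almostorthog} on the $V^1_\f$-component and with Lemma~\ref{lemmacorreta}, the combined displacement error $\zeta^n:=\eta^n_u+\tilde e^n_{\f,k}$ satisfies $(\sigma(\zeta^n):\varepsilon(v_1))-(\alpha\eta^n_\theta,\nabla\cdot v_1)=E^n(v_1)$ for all $v_1\in V^1_h$, where $|E^n(v_1)|\leq Ck^{d/2}\xi^k(\|\eta^n_\theta\|+\|\eta^n_u\|_{H^1})\|v_1\|_{H^1}$ and $E^n$ vanishes on $V^1_{\ms,k}$. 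Testing this (and its time difference) with $\eta^n_u$ and $\ddt\eta^n_u$, and invoking \eqref{boundcorreta}, gives the auxiliary pointwise bounds $\|\eta^n_u\|_{H^1}\leq C\|\eta^n_\theta\|$ and $\|\ddt\eta^n_u\|_{H^1}\leq C\|\ddt\eta^n_\theta\|$. Similarly, subtracting \eqref{locgfem2} from \eqref{fem2} and using \eqref{mskritz2} for the $\kappa$-term, I obtain for $v_2\in V^2_{\ms,k}$
\[
(\ddt\eta^n_\theta,v_2)+(\kappa\nabla\eta^n_\theta,\nabla v_2)+(\alpha\nabla\cdot\ddt\zeta^n,v_2)=-(\ddt\rho^n_\theta,v_2)-(\alpha\nabla\cdot\ddt\rho^n_u,v_2).
\]

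The heart of the argument is then the energy estimate obtained by testing this temperature error equation with $v_2=\ddt\eta^n_\theta$. The term $(\kappa\nabla\eta^n_\theta,\nabla\ddt\eta^n_\theta)$ telescopes into $\tfrac{1}{2\tau}(\|\eta^n_\theta\|^2_\kappa-\|\eta^{n-1}_\theta\|^2_\kappa)$, and the coupling term is resolved by inserting $v_1=\ddt\zeta^n$ into the time-differenced displacement error equation, which converts $(\alpha\nabla\cdot\ddt\zeta^n,\ddt\eta^n_\theta)$ into $\|\ddt\zeta^n\|^2_\sigma-\ddt E^n(\ddt\zeta^n)$; the favorable sign of $\|\ddt\zeta^n\|^2_\sigma$ is precisely the cancellation mechanism of Theorem~\ref{femreg}. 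Multiplying by $\tau$ and summing (the boundary term vanishes because $\theta^0_h=0$ forces $\eta^0_\theta=0$ and $\zeta^0=0$), the remainder $\ddt E^n(\ddt\zeta^n)=\ddt E^n(\ddt\tilde e^n_{\f,k})$ is bounded via \eqref{lemmacorreta-ddt}, \eqref{boundcorreta} and $\|\ddt\eta^n_u\|_{H^1}\le C\|\ddt\eta^n_\theta\|$ by $CH\sum_j\tau\|\ddt\eta^j_\theta\|^2$, hence absorbed under assumption \ref{H-bound}. The data terms $\sum_j\tau(\ddt\rho^j_\theta,\ddt\eta^j_\theta)$ and $\sum_j\tau(\alpha\nabla\cdot\ddt\rho^j_u,\ddt\eta^j_\theta)$ are split by Young's inequality, and $\sum_j\tau\|\ddt\rho^j_\theta\|^2$, $\sum_j\tau\|\ddt\rho^j_u\|^2_{H^1}$ are projection errors of $\ddt\theta^j_h,\ddt u^j_h$, controlled again by Lemma~\ref{locstationaryconv} (with $\|\mathcal A_1(\ddt u^j_h,\ddt\theta^j_h)\|\le\|\ddt f^j\|$ from \eqref{udiff1}) times the summed regularity quantity $\sum_j\tau\|\ddt\theta^j_h\|^2_{H^1}$ from \eqref{femreg2}. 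This yields $\|\eta^n_\theta\|_{H^1}\leq C(H+k^{d/2}\xi^k)(\text{data})$, and the auxiliary bounds then close the displacement estimate. I expect the main obstacle to be exactly the simultaneous treatment of the elliptic-parabolic coupling and the lost Galerkin orthogonality: one must check that \emph{every} defect produced by \eqref{almostorthog} and Lemma~\ref{lemmacorreta} carries the factor $k^{d/2}\xi^k\sim H$, so that it can be absorbed rather than accumulated over the time steps.
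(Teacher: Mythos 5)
Your proposal is correct, and its skeleton coincides with the paper's: the same splitting into localized Ritz projection errors $\rho$ and discrete errors $\eta^n_u$, $\eta^n_\theta$, $\tilde e^n_{\f,k}$; the same treatment of the $\rho$-terms by Lemma~\ref{locstationaryconv} together with the discrete regularity of Theorem~\ref{femreg} (where $\theta^0_h=0$ is what makes \eqref{femreg2} available); and the same critical energy estimate, namely testing the temperature error equation with $\ddt\eta^n_\theta$ and cancelling the coupling term against the time-differenced displacement error equation, with every localization defect carrying the factor $k^{d/2}\xi^k$ through \eqref{almostorthog}, Lemma~\ref{lemmacorreta} and \eqref{lemmacorreta-ddt}, and absorbed under \ref{H-bound}. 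Where you genuinely deviate is in the overall bookkeeping: the paper runs \emph{two} coupled energy estimates --- a first one with $v_1=\ddt\eta^n_u$, $v_2=\eta^n_\theta$, which controls $\|\eta^n_u\|_{H^1}$ and $\|\tilde e^n_{\f,k}\|_{H^1}$ only up to the defect sum $k^d\xi^{2k}\sum_j\tau(\|\ddt\eta^j_u\|^2_{H^1}+\|\ddt\tilde e^j_{\f,k}\|^2_{H^1})$, and a second one (your main estimate) that closes this sum and yields $\|\eta^n_\theta\|_{H^1}$. You eliminate the first estimate entirely by exploiting that the displacement error equation is elliptic at each time level: testing it and its time difference with $\eta^n_u$, resp.\ $\ddt\eta^n_u$, and using \eqref{almostorthog} together with \eqref{boundcorreta}, gives the pointwise bounds $\|\eta^n_u\|_{H^1}\le C\|\eta^n_\theta\|$ and $\|\ddt\eta^n_u\|_{H^1}\le C\|\ddt\eta^n_\theta\|$, so the displacement error is slaved to the temperature error and a single parabolic energy argument suffices. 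That is a real streamlining, and it also simplifies the absorption step, since all defects can be expressed in $\|\ddt\eta^n_\theta\|$ alone.

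Two small points to tighten. First, your claim that $|E^n(v_1)|\le Ck^{d/2}\xi^k(\|\eta^n_\theta\|+\|\eta^n_u\|_{H^1})\|v_1\|_{H^1}$ holds for \emph{all} $v_1\in V^1_h$ implicitly requires $H^1$-stability of the non-orthogonal splitting $V^1_h=V^1_{\ms,k}\oplus V^1_\f$, which you do not establish; the gap is harmless because you only evaluate $E^n$ and $\ddt E^n$ at arguments whose $V^1_{\ms,k}$-component is annihilated and whose $V^1_\f$-component is explicitly $\ddt\tilde e^n_{\f,k}$, and on $V^1_\f$ the bound follows directly from \eqref{almostorthog} and Lemma~\ref{lemmacorreta}. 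Second, spell out, as the paper does, that $\theta^0_h=0$ forces $\eta^0_\theta=0$, $\tilde e^0_{\f,k}=0$ and, via \eqref{u0} and \eqref{locu0ms} together with coercivity, $\eta^0_u=0$: this is exactly what legitimizes the time-differenced error equation at $n=1$ and makes the telescoping sums start from zero.
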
 
\begin{proof}
	We divide the error into the terms
	\begin{align*}
	u^n_h-\tilde u^n_{\ms,k} &= u^n_h-\tilde R^1_{\ms,k}(u^n_h,\theta^n_h) + \tilde R^1_{\ms,k}(u^n_h,\theta^n_h)  - \tilde u^n_{\ms,k} =: \tilde\rho_u^n + \tilde\eta_u^n,\\
	\theta^n_h-\theta^n_{\ms,k} &= \theta^n_h-R^2_{\ms,k} \theta^n_h + R^2_{\ms,k} \theta^n_h - \theta^n_{\ms,k} =: \rho_\theta^n + \eta_\theta^n.
	\end{align*}
	We also adopt the following notation
	\begin{align*}
	\tilde e^n_{\f,k}:=\tilde R_{\f,k} \theta^n_h-u^n_{\f,k}, \quad \eta^n_u:=\tilde \eta^n_u- \tilde e^n_{\f,k} = R^1_{\ms,k}(u^n_h,\theta^n_h)  - u^n_{\ms,k}.
	\end{align*}
	From \eqref{fem2} it follows that 
	\begin{align*}
	(\kappa \nabla \theta^n_h,\nabla v_2)=(g^n-\ddt\theta^n_h-\nabla \cdot \ddt u^n_h,v_2),\quad \forall v_2 \in V^2_h,
	\end{align*}
	so by Lemma~\ref{locstationaryconv} we have the bound 
	\begin{align*}
	\|\rho^n_\theta\|_{H^1}\leq C(H+k^{d/2}\xi^k)\|P^2_hg^n-\ddt\theta^n_h-\nabla \cdot \ddt u^n_h\|,
	\end{align*}
	where $P^2_h$ denotes the $L_2$-projection onto $V^2_h$. Theorem~\ref{femreg} now completes this bound.
	Similarly, \eqref{fem1} gives
	\begin{align*}
	(\sigma(u^n_h):\varepsilon(v_1))- (\alpha \theta^n_h, \nabla \cdot v_1) = (f^n,v_1),\quad \forall v_1 \in V^1_h,
	\end{align*}
	so, again, by Lemma~\ref{locstationaryconv} we get
	\begin{align*}
	\|\tilde \rho^n_u\|_{H^1} \leq C(H+k^{d/2}\xi^k)(\|f^n\| + \|\theta^n_h\|_{H^1}),
	\end{align*}
	which can be further bounded by using Theorem~\ref{femreg}. To bound $\tilde \eta^n_u$ and $\eta^n_\theta$ we note that for $v_1\in V^1_{\ms,k}$
	\begin{align}\label{eq:eta2}
	(\sigma(\tilde \eta^n_u):&\varepsilon(v_1)) - (\alpha \eta^n_\theta,\nabla \cdot v_1) \\&= (\sigma(\tilde R^1_{\ms,k}(u^n_h,\theta^n_h)):\varepsilon(v_1)) - (\alpha R^2_{\ms,k}\theta^n_h,\nabla \cdot v_1) - (f^n,v_1)\notag \\
	&= (\sigma(u^n_h):\varepsilon(v_1))  - (\alpha \theta^n_h,\nabla \cdot v_1) - (f^n,v_1)=0,\notag
	\end{align}
	where we have used the Ritz projection \eqref{mskritz1}, and the equations \eqref{fem1} and \eqref{locgfem1}. Similarly, for $v_2\in V^2_{\ms,k}$ we have
	\begin{align*}
	(\ddt &\eta^n_\theta, v_2) + (\kappa \nabla \eta^n_\theta, \nabla v_2) + (\alpha \nabla \cdot \ddt \tilde\eta^n_u,v_2) \\&= (\ddt R^2_{\ms,k}\theta^n_h,v_2) + (\kappa \nabla R^2_{\ms,k} \theta^n_h, \nabla v_2) + (\alpha \nabla \cdot \ddt \tilde R^1_{\ms,k}(u^n_h,\theta^n_h),v_2) -(g^n,v_2)\notag\\
	&=(-\ddt \rho^n_\theta, v_2) + (-\alpha \nabla \cdot \ddt \tilde\rho^n_u, v_2)\notag
	\end{align*}
	For simplicity, we denote $\rho^n:=\rho^n_\theta+\alpha \nabla \cdot \tilde\rho^n_u$ such that
	\begin{align}\label{eq:eta1}
	(\ddt \eta^n_\theta,v_2) + (\kappa \nabla \eta^n_\theta, \nabla v_2) + (\alpha \nabla \cdot \ddt \tilde\eta^n_u,v_2) =(-\ddt \rho^n, v_2), \quad \forall v_2 \in V^2_{\ms,k}
	\end{align}
	Now, choose $v_1=\ddt \eta^n_u$ and $v_2=\eta^n_\theta$ and add the resulting equations. Note that the coupling terms on the left hand side results in the term $(\alpha \nabla \cdot \ddt \tilde e^n_{\f,k}, \eta^n_\theta)$. 
	We conclude that
	\begin{align*}
	(\sigma(\tilde \eta^n_u):\varepsilon(\ddt \eta^n_u)) + (\ddt \eta^n_\theta,\eta^n_\theta) + (\kappa \nabla \eta^n_\theta, \nabla \eta^n_\theta) =(-\ddt \rho^n, \eta^n_\theta) - (\alpha \nabla\cdot \ddt \tilde e^n_{\f,k},\eta^n_\theta),
	\end{align*}
	and by splitting the first term
	\begin{align}\label{eq:rho}
	(\sigma(\eta^n_u):\varepsilon(\ddt \eta^n_u)) &+ (\ddt \eta^n_\theta,\eta^n_\theta) + (\kappa \nabla \eta^n_\theta, \nabla \eta^n_\theta) \\&=(-\ddt \rho^n, \eta^n_\theta)-(\sigma(\tilde e^n_{\f,k}):\varepsilon(\ddt \eta^n_u))-(\alpha \nabla\cdot \ddt \tilde e^n_{\f,k},\eta^n_\theta).\notag
	\end{align}
	Using Lemma~\ref{lemmacorreta} we can bound
	\begin{align}\label{alphaterm}
	-(\alpha \nabla\cdot \ddt \tilde e^n_{\f,k},\eta^n_\theta) &\leq |(\alpha \nabla\cdot \ddt \tilde e^n_{\f,k},\eta^n_\theta)-(\sigma(\tilde e^n_{\f,k}):\varepsilon(\ddt \tilde e^n_{\f,k}))| \\&\qquad- (\sigma(\tilde e^n_{\f,k}):\varepsilon(\ddt \tilde e^n_{\f,k}))\notag\\
	&\leq Ck^{d/2}\xi^k\|\ddt \tilde e^n_{\f,k}\|_{H^1}\|\eta^n_\theta\| - (\sigma(\tilde e^n_{\f,k}):\varepsilon(\ddt \tilde e^n_{\f,k})),\notag
	\end{align}
	and the almost orthogonal property \eqref{almostorthog} together with \eqref{boundcorreta} gives
	\begin{align}\label{sigmaterm}
	- (\sigma(\tilde e^n_{\f,k}):\varepsilon(\ddt \eta^n_u)) \leq Ck^{d/2}\xi^k\|\tilde e^n_{\f,k}\|_{H^1}\|\ddt \eta^n_u\|_{H^1}\leq Ck^{d/2}\xi^k\|\eta^n_{\theta}\|\|\ddt \eta^n_u\|_{H^1}.
	\end{align}
	Thus, multiplying \eqref{eq:rho} by $\tau$ and using Cauchy-Schwarz and Young's inequality we get
	\begin{align*}
	C\tau\|\eta^n_\theta\|^2_{H^1} &+ \frac{1}{2}(\|\eta^n_u\|^2_\sigma +\|\tilde e^n_{\f,k}\|^2_\sigma-\|\tilde \eta^{n-1}_u\|^2_\sigma -\|\tilde e^{n-1}_{\f,k}\|^2_\sigma) + \frac{1}{2}(\|\eta^n_\theta\|^2  - \|\eta^{n-1}_\theta\|^2)  \\
	&\leq  C\tau\|\ddt \rho^n\|^2_{H^{-1}} + C\tau k^{d/2}\xi^k\|\eta^n_\theta\|(\|\ddt \eta^n_u\|_{H^1} + \|\ddt \tilde e^n_{\f,k}\|_{H^1}),
	\end{align*}
	where $\|\eta^n_\theta\|\leq C\|\eta^n_\theta\|_{H^1}$ can be kicked to the left hand side. Summing over $n$ gives
	\begin{align*}
	& C\sum_{j=1}^n\tau\|\eta^j_\theta\|^2_{H^1} + \frac{1}{2}(\|\eta^n_u\|^2_\sigma + \|\tilde e^n_{\f,k}\|^2_\sigma) + \frac{1}{2}\|\eta^n_\theta\|^2  \\&\quad\leq \frac{c_\sigma}{2}\|\tilde \eta^0_u\|^2_{H^1}
	 +C\sum_{j=1}^n\tau(\|\ddt \rho^j\|^2_{H^{-1}} + k^{d}\xi^{2k}(\|\ddt \eta^j_u\|^2_{H^1} + \|\ddt \tilde e^j_{\f,k}\|^2_{H^1})),
	\end{align*}
	where we have used that $\eta^0_\theta=0$. Furthermore, we note that if $\theta^0_h=0$, then $\tilde R_{\f,k} \theta^0_h=0$ and $u^0_{\f,k}=0$. Hence, $e^0_{\f,k}=0$. From \eqref{locu0ms} and \eqref{u0} we have, if $\theta^0_h=\theta^0_{\ms,k}=0$, for $v_1\in V^1_{\ms,k}$,
	\begin{align*}
	(\sigma(u^0_{\ms,k}):\varepsilon(v_1)) = (f^0,v_1) = (\sigma(u^0_{h}):\varepsilon(v_1)) = (\sigma(R^1_{\ms,k}(u^0_{h},0)):\varepsilon(v_1)),
	\end{align*}
	so also $\eta^0_u=0$.
	
	To bound $\ddt \rho^j_\theta$ and $\alpha\nabla \cdot \ddt \tilde\rho^j_u$ we note that due to \eqref{fem1} and \eqref{u0}, $\ddt u^n_h$ and $\ddt \theta^n_h$ satisfy the equation 
	\begin{align*}
	(\sigma(\ddt u^n_h):\varepsilon(v_1))- (\alpha \ddt\theta^n_h, \nabla \cdot v_1) = (\ddt f^n,v_1), \quad \forall v_1 \in V^1_h.
	\end{align*}	
	Hence, by Lemma~\ref{locstationaryconv} and the Aubin-Nitsche duality argument we have
	\begin{align}\label{rhobound1}
	\|\ddt \rho^j_\theta\|_{H^{-1}} &\leq \|\ddt \rho^j_\theta\| \leq C(H+k^{d/2}\xi^k)\|\ddt \rho^j_\theta\|_{H^1} \leq C(H+k^{d/2}\xi^k)\|\ddt \theta^j_h\|_{H^1},
	\end{align}
	and for $\ddt \tilde \rho^j_u$ we get
	\begin{align}\label{rhobound2}
	\|\alpha\nabla &\cdot \ddt \tilde\rho^j_u\|_{H^{-1}} \\&\leq \alpha_2\|\nabla \cdot \ddt \tilde\rho^j_u\| \leq C\|\ddt \tilde\rho^j_u\|_{H^1} \leq C(H+k^{d/2}\xi^k)(\|\ddt f^j\| + \|\ddt \theta^j_h\|_{H^1}).\notag
	\end{align}
	Thus, using \eqref{sigma-bounds}, we arrive at the following bound
	\begin{align}\label{eq:eta-u}
	\sum_{j=1}^n\tau\|\eta^j_\theta\|^2_{H^1} + \|\eta^n_u\|^2_{H^1} &+  \|\tilde e^n_{\f,k}\|^2_{H^1} + \|\eta^n_\theta\|^2  \\&\leq C(H+k^{d/2}\xi^k)^2\sum_{j=1}^n\tau\big(\|\ddt\theta^j_h\|^2_{H^1} + \|\ddt f^j\|^2\big)\notag\\&\quad + Ck^{d}\xi^{2k}\sum_{j=1}^n \tau(\|\ddt \eta^j_u\|^2_{H^1} + \|\ddt \tilde e^j_{\f,k}\|^2_{H^1}),\notag
	\end{align}
	where we apply Theorem~\ref{femreg} to the first sum on the right hand side. If we can find an upper bound on $\sum_{j=1}^n\tau(\|\ddt\eta^j_u\|^2_{H^1} + \|\ddt\tilde e^j_{\f,k}\|^2)$, then \eqref{eq:eta-u} gives a bound for $\|\tilde \eta^n_u\|_{H^1}\leq \|\eta^n_u\|_{H^1} + \|\tilde e^n_{\f,k}\|_{H^1}$. This is done next, and we bound $\|\eta^n_\theta\|_{H^1}$ at the same time. For this purpose, we choose $v_2=\ddt \eta^n_\theta$ in \eqref{eq:eta1} and note that it follows from \eqref{eq:eta2} that
	\begin{align}\label{eq:eta3}
	(\sigma(\ddt\tilde \eta^n_u):&\varepsilon(\ddt\eta^n_u)) - (\alpha \ddt \eta^n_\theta,\nabla \cdot \ddt \eta^n_u) = 0.
	\end{align}
	This also holds for $n=1$ since $\eta^0_\theta=0$ and $\tilde\eta^0_u=0$. Thus, by adding the resulting equations, we have
	\begin{align*}
	c_\sigma\|\ddt \eta^n_u\|^2_{H^1} &+ \|\ddt \eta^n_\theta\|^2 + (\kappa \nabla \eta^n_\theta,\nabla \ddt \eta^n_\theta) \\
	&= (-\ddt \rho^n,\ddt \eta^n_\theta) - (\sigma(\ddt \tilde e^n_{\f,k}):\varepsilon(\ddt \eta^n_u))- (\alpha \nabla \cdot \ddt \tilde e^n_{\f,k},\ddt \eta^n_\theta)\\
	&\leq \|\ddt\rho^n\|\|\ddt \eta^n_\theta\| + C_\mathrm{ort}k^{d/2}\xi^k\|\ddt \tilde e^n_{\f,k}\|_{H^1}\|\ddt \eta^n_u\|_{H^1} - (\alpha \nabla \cdot \ddt \tilde e^n_{\f,k},\ddt \eta^n_\theta)
	\end{align*}
	where we have used \eqref{almostorthog}. For the last term we use Lemma~\ref{lemmacorreta} to achieve
	\begin{align*}
	- (\alpha \nabla \cdot \ddt \tilde e^n_{\f,k},\ddt \eta^n_\theta) \leq C_\mathrm{co}k^{d/2}\xi^k\|\ddt \tilde e^n_{\f,k}\|_{H^1}\|\ddt\eta^n_\theta\| - (\sigma(\ddt \tilde e^n_{\f,k}):\varepsilon(\ddt \tilde e^n_{\f,k})).
	\end{align*}
	Thus, we have
	\begin{align*}
	& c_\sigma(\|\ddt \eta^n_u\|^2_{H^1} + \|\ddt \tilde e^n_{\f,k}\|^2_{H^1}) + \|\ddt \eta^n_\theta\|^2 + (\kappa \nabla \eta^n_\theta,\nabla \ddt \eta^n_\theta) \\
	&\quad \leq \|\ddt\rho^n\|\|\ddt \eta^n_\theta\| + C_\mathrm{ort}k^{d/2}\xi^k\|\ddt \tilde e^n_{\f,k}\|_{H^1}\|\ddt \eta^n_u\|_{H^1} + C_\mathrm{co}k^{d/2}\xi^k\|\ddt \tilde e^n_{\f,k}\|_{H^1}\|\ddt\eta^n_\theta\|,
	\end{align*}
	and using Young's inequality we deduce 
	\begin{align*}
	(c_\sigma-\frac{C_\mathrm{ort}k^{d/2}\xi^k}{2})&\|\ddt \eta^n_u\|^2_{H^1} + (c_\sigma-\frac{(C_\mathrm{ort}+C_\mathrm{co})k^{d/2}\xi^k}{2})\|\ddt \tilde e^n_{\f,k}\|^2_{H^1}) \\&+ (\frac{1}{2}-\frac{C_\mathrm{co}k^{d/2}\xi^k}{2})\|\ddt \eta^n_\theta\|^2 + (\kappa \nabla \eta^n_\theta,\nabla \ddt \eta^n_\theta)	\leq C\|\ddt\rho^n\|^2,
	\end{align*}
	where assumption \ref{H-bound} guarantees that the coefficients are positive. Multiplying by $\tau$, using that $\tau(\kappa \nabla \eta^n_\theta,\nabla \ddt \eta^n_\theta)\geq 1/2(\|\eta^n_\theta\|_\kappa-\|\eta^{n-1}_\theta\|_\kappa)$, and summing over $n$ we derive
	\begin{align*}
	\sum_{j=1}^n \tau(\|\ddt\eta^j_u\|^2_{H^1} &+ \|\ddt \tilde e^j_{\f,k}\|^2_{H^1}+ \|\ddt \eta^j_\theta\|^2) + \|\eta^n_\theta\|^2_{H^1}\\&\leq C\sum_{j=1}^n \tau\|\ddt \rho^j\|^2 \leq C(H+k^{d/2}\xi^k)\sum_{j=1}^n \tau(\|\ddt f^j\|^2+\|\ddt\theta^j_h\|^2_{H^1}),
	\end{align*}
	where we have used that $\eta^0_\theta=0$, the bound \eqref{kappa-bounds}, and \eqref{rhobound1}-\eqref{rhobound2}. We can now apply Theorem~\ref{femreg}. Thus, the lemma follows for $\|\theta^n_h-\theta^n_{\ms,k}\|_{H^1}$. Moreover, this bounds the last terms in \eqref{eq:eta-u}, which completes the proof.
\end{proof}

\begin{mylemma}\label{gfemerror2}
	Assume that $f=0$ and $g=0$, and that \ref{H-bound} holds. Let $\{u^n_h\}_{n=1}^N$ and $\{\theta^n_h\}_{n=1}^N$ be the solutions to \eqref{fem1}-\eqref{fem2} and $\{\tilde u^n_{\ms,k}\}_{n=1}^N$ and $\{\theta^n_{\ms,k}\}_{n=1}^N$ be the solutions to \eqref{locgfem1}-\eqref{loctimecorr}. For $n \in \{1,...,N\}$ we have
	\begin{align}\label{eq:error2}
	\|u^n_h-\tilde u^n_{\ms,k}\|_{H^1} + t_n^{1/2}\|\theta^n_h-\theta^n_{\ms,k}\|_{H^1} \leq C(H+k^{d/2}\xi^{k})\|\theta^0_h\|_{H^1}.
	\end{align}
\end{mylemma}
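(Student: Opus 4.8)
The plan is to run the same error splitting as in Lemma~\ref{gfemerror1}, but to replace the ``smooth-data'' regularity \eqref{femreg2} by the singular bound \eqref{femreg3} and to compensate for the resulting blow-up of the time increments near $t=0$ by inserting the weights $t_n$, exactly as in the proof of \eqref{femreg3} itself. Thus I would write $u^n_h-\tilde u^n_{\ms,k}=\tilde\rho^n_u+\tilde\eta^n_u$ and $\theta^n_h-\theta^n_{\ms,k}=\rho^n_\theta+\eta^n_\theta$, keeping the notation $\tilde e^n_{\f,k}$, $\eta^n_u$ of Lemma~\ref{gfemerror1}. The error relations \eqref{eq:eta2}, \eqref{eq:eta1}, \eqref{eq:eta3} and the correction estimates (Lemma~\ref{lemmacorreta}, \eqref{lemmacorreta-ddt}, \eqref{almostorthog}, \eqref{boundcorreta}) carry over verbatim, since none of them used $f=0$ or $g=0$; only the right-hand sides must be re-estimated with \eqref{femreg3} and $t_n$-weights, and the initial data now contributes. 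Note that $\eta^0_\theta=0$ still holds, while $\eta^0_u$ and $\tilde e^0_{\f,k}$ are now nonzero; I would bound them by the stationary estimate at $t=0$ obtained from comparing \eqref{u0} with \eqref{locu0ms}, giving $\|\eta^0_u\|_{H^1}+\|\tilde e^0_{\f,k}\|_{H^1}\le C(H+k^{d/2}\xi^k)\|\theta^0_h\|_{H^1}$.

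For the projection errors, Lemma~\ref{locstationaryconv} applied to \eqref{fem1} gives $\|\tilde\rho^n_u\|_{H^1}\le C(H+k^{d/2}\xi^k)\|\theta^n_h\|_{H^1}$, and applied to \eqref{fem2} (with $g=0$) gives $\|\rho^n_\theta\|_{H^1}\le C(H+k^{d/2}\xi^k)\|\ddt\theta^n_h+\alpha\nabla\cdot\ddt u^n_h\|$. Here \eqref{eq:sumtheta} bounds $\|\theta^n_h\|_{H^1}\le C\|\theta^0_h\|_{H^1}$, so the displacement projection error needs no time weight, whereas \eqref{femreg3} bounds the increment by $Ct_n^{-1/2}\|\theta^0_h\|_{H^1}$, so that $t_n^{1/2}\|\rho^n_\theta\|_{H^1}\le C(H+k^{d/2}\xi^k)\|\theta^0_h\|_{H^1}$. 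These are precisely the displacement and (weighted) temperature projection contributions appearing in \eqref{eq:error2}.

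The new work is the weighted bound on $\eta^n_\theta$ in $H^1$. I would reproduce the second estimate of Lemma~\ref{gfemerror1} (test \eqref{eq:eta1} with $v_2=\ddt\eta^n_\theta$ and use \eqref{eq:eta3} to cancel the coupling up to the small correction terms), but multiply by $\tau t_n$ before summing. Using $\tau t_n(\kappa\nabla\eta^n_\theta,\nabla\ddt\eta^n_\theta)\ge\frac{t_n}{2}(\|\eta^n_\theta\|^2_\kappa-\|\eta^{n-1}_\theta\|^2_\kappa)$ and Abel summation (so that a lower-order remainder $\tau\sum_j\|\eta^{j-1}_\theta\|^2_\kappa$ appears), this produces $t_n\|\eta^n_\theta\|^2_\kappa+\sum_j\tau t_j(\|\ddt\eta^j_u\|^2_{H^1}+\|\ddt\tilde e^j_{\f,k}\|^2_{H^1}+\|\ddt\eta^j_\theta\|^2)$, controlled by the weighted forcing $\sum_j\tau t_j\|\ddt\rho^j\|^2$. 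The decisive point is that, by \eqref{rhobound1}-\eqref{rhobound2} with $f=0$, one has $\|\ddt\rho^j\|\le C(H+k^{d/2}\xi^k)\|\ddt\theta^j_h\|_{H^1}$, so that $\sum_j\tau t_j\|\ddt\rho^j\|^2\le C(H+k^{d/2}\xi^k)^2\sum_j\tau t_j\|\ddt\theta^j_h\|^2_{H^1}$, and the last sum is exactly the $t_n$-weighted quantity shown to be $\le C\|\theta^0_h\|^2_{H^1}$ inside the proof of \eqref{femreg3}; this is why the weight must be used, as the unweighted sum $\sum_j\tau\|\ddt\theta^j_h\|^2_{H^1}$ diverges like $\tau^{-1}$. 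This gives $t_n^{1/2}\|\eta^n_\theta\|_{H^1}\le C(H+k^{d/2}\xi^k)\|\theta^0_h\|_{H^1}$.

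For the displacement it suffices, by \eqref{eq:eta2} (tested with $\eta^n_u$) together with \eqref{almostorthog} and \eqref{boundcorreta}, to prove the \emph{unweighted} bound $\|\eta^n_\theta\|\le C(H+k^{d/2}\xi^k)\|\theta^0_h\|_{H^1}$, since then $\|\tilde\eta^n_u\|_{H^1}\le\|\eta^n_u\|_{H^1}+\|\tilde e^n_{\f,k}\|_{H^1}\le C\|\eta^n_\theta\|$. This unweighted $L_2$ estimate is where I expect the main obstacle to be: testing \eqref{eq:eta1} with $v_2=\eta^n_\theta$ leaves the forcing $-\sum_j\tau(\ddt\rho^j,\eta^j_\theta)$, and a direct bound would again need $\sum_j\tau\|\ddt\rho^j\|^2$, which is not summable. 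I would therefore move the increment off $\rho$ by summation by parts in time (exploiting $\eta^0_\theta=0$ and that $\|\rho^j\|$ is controlled uniformly via \eqref{eq:sumtheta}), and close the remaining term against the increments $\ddt\eta_\theta$ by a duality/smoothing argument of the type used for the parabolic LOD method in \cite{MP15}. Balancing this unweighted estimate with the $t_n$-weighted one of the previous paragraph, absorbing the remainder $\tau\sum_j\|\eta^{j-1}_\theta\|^2_\kappa$ and the initial terms, and using the positivity of the coefficients guaranteed by \ref{H-bound}, then yields \eqref{eq:error2}. The algebra parallels the three-level weighted argument for \eqref{femreg3}, but is complicated throughout by the nonorthogonality correction $\tilde e^n_{\f,k}$.
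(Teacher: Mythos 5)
Your overall skeleton (error splitting, projection bounds via Lemma~\ref{locstationaryconv} and \eqref{femreg3}, cancellation through \eqref{eq:eta3}, correction estimates from Lemma~\ref{lemmacorreta} and \eqref{almostorthog}) matches the paper, and your observation that $\sum_j\tau t_j\|\ddt\theta^j_h\|^2_{H^1}\le C\|\theta^0_h\|^2_{H^1}$ controls the forcing is correct. The gap is in the time weight of your first energy estimate. Multiplying the $\ddt\eta^n_\theta$-tested relation by $\tau t_n$ and Abel-summing the diffusion term leaves the remainder $\tfrac{1}{2}\sum_{j}\tau\|\eta^{j-1}_\theta\|^2_\kappa$, an \emph{unweighted} time sum of $H^1$-type norms. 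But the sharp behaviour of $\eta^j_\theta$ for $\theta^0_h\in H^1$ is precisely $\|\eta^j_\theta\|_{H^1}\sim(H+k^{d/2}\xi^k)\,t_j^{-1/2}\|\theta^0_h\|_{H^1}$ (this is what \eqref{eq:error2} asserts, and parabolic smoothing does not give more), so this remainder is in general of size $(H+k^{d/2}\xi^k)^2\log(t_n/\tau)\,\|\theta^0_h\|^2_{H^1}$; a discrete Gronwall argument applied to $t_n\|\eta^n_\theta\|^2_\kappa\le C\sum_{j<n}\tau t_j^{-1}\bigl(t_j\|\eta^j_\theta\|^2_\kappa\bigr)+F$ only produces a factor $n^C$. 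Hence your scheme cannot close at the claimed rate. The paper avoids this by multiplying by $\tau t_n^2$ (as in the proof of \eqref{femreg3}), so the remainder becomes the $t_j$-weighted sum $\sum_j\tau t_{j-1}\|\eta^{j-1}_\theta\|^2_\kappa$, see \eqref{etabounds}, paying for the forcing with $\|\ddt\rho^j\|\le C(H+k^{d/2}\xi^k)t_j^{-1}\|\theta^0_h\|_{H^1}$ as in \eqref{ddtrhobound}; the weighted remainder is then controlled by a \emph{second} energy estimate with weight $\tau t_n$, obtained by testing with $(\ddt\eta^n_u,\eta^n_\theta)$, see \eqref{eq:etat_n}.

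The second gap is the one you flag yourself, and your proposed repair does not work: after summation by parts on $-\sum_j\tau(\ddt\rho^j,\eta^j_\theta)$ you face either $\sum_j\tau t_j^{-1}\|\rho^j\|^2$ (again logarithmically divergent) or the unweighted $\sum_j\tau\|\ddt\eta^j_\theta\|^2$ (worse), and the appeal to an unspecified duality/smoothing argument from \cite{MP15} leaves exactly this point open. The missing device in the paper is elementary but different: multiply \eqref{eq:eta1} by $\tau$ and sum in time \emph{before} testing, so the right-hand side becomes $(-\rho^n+\rho^0,v_2)$ with no difference quotient at all; then test with $v_2=\eta^n_\theta$ (together with \eqref{eq:eta2} tested by $\eta^n_u$), use the telescoping identity for $(\kappa\nabla\sum_{j\le n}\tau\eta^j_\theta,\nabla\eta^n_\theta)$ and the uniform bound $\|\rho^j\|\le C(H+k^{d/2}\xi^k)\|\theta^0_h\|_{H^1}$; this is \eqref{eq:etasum}--\eqref{sumetabounds}, and it supplies precisely the unweighted sum $\sum_j\tau(\|\eta^j_u\|^2_{H^1}+\|\tilde e^j_{\f,k}\|^2_{H^1}+\|\eta^j_\theta\|^2)$ that the second energy estimate needs. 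A minor further point: with $f=0$ the initializations \eqref{u0} and \eqref{locu0ms} are consistent through the Ritz projections, so $\tilde e^0_{\f,k}=0$, $\eta^0_u=0$ and $\tilde\eta^0_u=0$; your claim that these initial terms are nonzero (and need a separate stationary estimate) is a misreading, though a harmless one.
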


\begin{proof}
	As in the proof of Lemma~\ref{gfemerror1} we split the error into two parts
	\begin{align*}
	u^n_h-\tilde u^n_{\ms,k} = \tilde\rho_u^n + \tilde\eta_u^n,\quad \theta^n_h-\theta^n_{\ms,k} = \rho_\theta^n + \eta_\theta^n,
	\end{align*}
	where Lemma~\ref{locstationaryconv} and Theorem~\ref{femreg} gives 
	\begin{align*}
	\|\rho^n_\theta\|_{H^1}&\leq C(H+k^{d/2}\xi^k)\|-\ddt \theta^n_h - \nabla\cdot \ddt u^n_h\| \leq C(H+k^{d/2}\xi^k)t_n^{-1/2}\|\theta^0_h\|_{H^1},\\
	\|\tilde \rho^n_u\|_{H^1}&\leq C(H+k^{d/2}\xi^k)\|\theta^n_h\|_{H^1}\leq C(H+k^{d/2}\xi^k)\|\theta^0_h\|_{H^1}.
	\end{align*}
	Now, note that \eqref{eq:eta1} and \eqref{eq:eta3} holds also when $f=0$ and $g=0$. In particular, \eqref{eq:eta3} holds also for $n=1$ due to the definition of $u^0_{\ms,k}$ and $u^0_h$ in \eqref{locu0ms} and \eqref{u0} respectively. By choosing $v_2=\ddt \eta^n_\theta$ and adding the resulting equations we derive
	\begin{align*}
	c_\sigma\|\ddt \eta^n_u\|^2_{H^1} + \|\ddt \eta^n_\theta\|^2 + (\kappa \nabla \eta^n_\theta,\nabla \ddt\eta^{n}_\theta) &+ (\sigma(\ddt \tilde e^n_{\f,k}):\epsilon(\ddt \eta^n_u)) \\&+ (\alpha \nabla\cdot \ddt \tilde e^n_{\f,k},\ddt \eta^n_\theta) \leq  \|\ddt \rho^n\|\|\ddt \eta^n_\theta\|.
	\end{align*}
	Recall $\rho^n=\rho^n_\theta + \alpha \nabla \cdot \tilde \rho^n_u$. As in the proof of Lemma~\ref{gfemerror1} we get from Lemma~\ref{boundcorreta}
	\begin{align*}
	(\alpha \nabla \cdot \ddt \tilde e^n_{\f,k},\ddt \eta^n_\theta) \geq -C_\mathrm{co}k^{d/2}\xi^k\|\ddt \tilde e^n_{\f,k}\|_{H^1}\|\ddt\eta^n_\theta\| + (\sigma(\ddt \tilde e^n_{\f,k}):\varepsilon(\ddt \tilde e^n_{\f,k})).
	\end{align*}
	and from \eqref{almostorthog}
	\begin{align*}
	(\sigma(\tilde e^n_{\f,k}):\varepsilon(\ddt \eta^n_u)) \geq -C_\mathrm{ort}k^{d/2}\xi^k\|\ddt \tilde e^n_{\f,k}\|_{H^1}\|\ddt \eta^n_u\|_{H^1}.
	\end{align*}
	Hence, we have
	\begin{align*}
	(c_\sigma-\frac{C_\mathrm{ort}k^{d/2}\xi^k}{2})&\|\ddt \eta^n_u\|^2_{H^1} + (c_\sigma-\frac{(C_\mathrm{ort}+C_\mathrm{co})k^{d/2}\xi^k}{2})\|\ddt \tilde e^n_{\f,k}\|^2_{H^1} \\&+(\frac{1}{2}-\frac{C_\mathrm{co}k^{d/2}\xi^k}{2})\|\ddt \eta^n_\theta\|^2 + (\kappa \nabla \eta^n_\theta,\nabla \ddt\eta^n_\theta) \leq  \|\ddt \rho^n\|^2,
	\end{align*}
	and assumption \ref{H-bound} guarantees that the coefficients are positive. Multiplying by $\tau t^2_n$, using that $\tau(\kappa \nabla \eta^n_\theta,\nabla \ddt\eta^n_\theta)\geq 1/2(\|\eta^n\|^2_\kappa-\|\eta^{n-1}\|^2_\kappa)$ and  $t_n^2-t_{n-1}^2\leq 3\tau t_{n-1}$, for $n\geq2$, now give
	\begin{align*}
	C\tau t_n^2&(\|\ddt \eta^n_u\|^2_{H^1} + \|\ddt \tilde e^n_{\f,k}\|^2_{H^1} +\|\ddt \eta^n_\theta\|^2) + \frac{t_n^2}{2}\|\eta^n_\theta\|^2_\kappa- \frac{t_{n-1}^2}{2}\|\eta^{n-1}_\theta\|^2_\kappa\\&\leq  C\tau t_n^2\|\ddt \rho^n\|^2  + C\tau t_{n-1}\|\eta^{n-1}_\theta\|^2_\kappa.
	\end{align*}
	Note that this inequality also holds for $n=1$, since $\eta^0_\theta=0$ (recall $\theta^0_{\ms,k}=R^2_{\ms,k}\theta^0_h$). Summing over $n$ gives and using \eqref{kappa-bounds}
	\begin{align}\label{etabounds}
	C\sum_{j=1}^n \tau t_j^2(\|\ddt \eta^j_u\|^2_{H^1} + \|\ddt \tilde e^j_{\f,k}\|^2_{H^1} &+ \|\ddt \eta^j_\theta\|^2) + c_\kappa t_n^2\|\eta^n_\theta\|^2_{H^1} \\&\leq C\sum_{j=1}^n\tau t^2_j\|\ddt \rho^j\|^2 + C\sum_{j=1}^{n-1}\tau t_j\| \eta^j_\theta\|^2_{H^1},\notag
	\end{align}
	and since $f^n=0$ and $g^n=0$, Lemma~\ref{locstationaryconv} and the Aubin-Nitsche trick as in \eqref{rhobound1} together with Theorem~\ref{femreg} give
	\begin{align}\label{ddtrhobound}
	\|\ddt \rho^j\|\leq \|\ddt \rho^j_\theta\| + \alpha_2\|\ddt \rho^j_u\|_{H^1} &\leq C(H+k^{d/2}\xi^k)(\|\ddt \theta^j_h\|_{H^1}+\|\nabla \cdot \ddt u^j_h\|)\\ &\leq C(H+k^{d/2}\xi^k)t_j^{-1}\|\theta^0_h\|_{H^1}.\notag
	\end{align}
	To bound the last sum on the right hand side in \eqref{etabounds} we choose $v_1=\ddt \eta^n_u$ and $v_2=\eta^n_\theta$ in \eqref{eq:eta1} and \eqref{eq:eta2} and add the resulting equations. This gives
	\begin{align*}
	(\sigma(\eta^n_u):\varepsilon(\ddt \eta^n_u)) &+ (\ddt \eta^n_\theta,\eta^n_\theta) + (\kappa \nabla \eta^n_\theta,\nabla  \eta^n_\theta) \\
	&= (-\ddt \rho^n,\eta^n_\theta) - (\sigma(\tilde e^n_{\f,k}):\varepsilon(\ddt \eta^n_u)) -(\alpha \nabla\cdot \ddt \tilde e^n_{\f,k},\eta^n_\theta),
	\end{align*}
	where the use of \eqref{alphaterm} and \eqref{sigmaterm} gives
	\begin{align*}
	(\sigma(\eta^n_u):\varepsilon(\ddt \eta^n_u)) & + (\sigma(\tilde e^n_{\f,k}):\varepsilon(\ddt \tilde e^n_{\f,k}))+ (\ddt \eta^n_\theta,\eta^n_\theta) + (\kappa \nabla \eta^n_\theta,\nabla  \eta^n_\theta) \\
	&\leq \|\ddt \rho^n\|\|\eta^n_\theta\| + Ck^{d/2}\xi^k\|\eta^n_\theta\|(\|\ddt \eta^n_u\|_{H^1} + \|\ddt \tilde e^n_{\f,k}\|_{H^1}).
	\end{align*}
	Multiplying by $\tau t_n$ and using that $t_n-t_{n-1}=\tau$ we get
	\begin{align*}
	C\tau & t_n\|\eta^n_\theta\|^2_{H^1} + \frac{t_n}{2}(\|\eta^n_u\|^2_\sigma +\|\tilde e^n_{\f,k}\|^2_\sigma)  -\frac{t_{n-1}}{2}(\|\eta^{n-1}_u\|^2_\sigma + \|\tilde e^{n-1}_{\f,k}\|^2_\sigma) \\&\qquad+ \frac{t_n}{2}\|\eta^n_\theta\|^2 -\frac{t_{n-1}}{2}\|\eta^{n-1}_\theta\|^2 \\ &\leq Ct_n\tau(\|\ddt \rho^n\|\|\eta^n_\theta\|+ k^{d/2}\xi^k\|\eta^n_\theta\|(\|\ddt \eta^n_u\|_{H^1} + \|\ddt \tilde e^n_{\f,k}\|_{H^1}) \\&\qquad+ C\tau(\|\eta^{n-1}_u\|^2_\sigma+ \|\tilde e^{n-1}_{\f,k}\|^2_\sigma +\|\eta^{n-1}_\theta\|^2) \\
	&\leq Ct^2_n\tau\|\ddt \rho^n\|^2 + C_y t^2_n\tau k^d\xi^{2k}(\|\ddt \eta^n_u\|^2_{H^1} + \|\ddt \tilde e^n_{\f,k}\|^2_{H^1}) \\&\qquad+ C\tau(\|\tilde\eta^{n-1}_u\|^2_\sigma +\|\tilde e^{n-1}_{\f,k}\|^2_\sigma + \|\eta^{n-1}_\theta\|^2 +\|\eta^n_\theta\|^2),
	\end{align*}
	where we have used Young's (weighted) inequality on the form, $\tau t_n ab \leq \tau t_n^2 a^2 + \tau b^2/4$, in the last step. For the second term we have used the inequality with an additional $C_y$, i.e. $\tau t_n ab \leq C_y\tau t_n^2 a^2 + (4C_y)^{-1}\tau b^2$. Note that $C_y $ can be made arbitrarily small. Summing over $n$ and using \eqref{sigma-bounds} now gives
	\begin{align}\label{eq:etat_n}
	C\sum_{j=1}^n\tau t_j&\|\eta^{j}_\theta\|^2_{H^1} + \frac{c_\sigma t_n}{2}(\|\eta^n_u\|^2_{H^1} + \|\tilde e^{n}_{\f,k}\|^2_{H^1}) + \frac{t_n}{2}\|\eta^n_\theta\|^2  \\&\leq C\sum_{j=1}^n\tau t^2_j\|\ddt \rho^j\|^2+ C_y k^{d}\xi^{2k}\sum_{j=1}^n\tau t^2_j(\|\ddt \eta^j_u\|^2_{H^1} + \|\ddt \tilde e^j_{\f,k}\|^2_{H^1}))\notag \\&\quad+ C\sum_{j=0}^n\tau(\|\eta^{j}_u\|^2_{H^1} + \|\tilde e^{j}_{\f,k}\|^2_{H^1}+ \|\eta^{j}_\theta\|^2).\notag
	\end{align}
	We can now use \eqref{etabounds} to deduce
	\begin{align*}
	\sum_{j=1}^n\tau t^2_j (\|\ddt \eta^j_u\|^2_{H^1} + \|\ddt \tilde e^j_{\f,k}\|^2_{H^1}) &\leq C\sum_{j=1}^n\tau t^2_j\|\ddt \rho^j\|^2 + C\sum_{j=1}^{n-1}\tau t_j\| \eta^j_\theta\|^2_{H^1}.
	\end{align*}
	Using this in \eqref{eq:etat_n} gives 
	\begin{align}\label{eq:etat_n2}
	C\sum_{j=1}^n\tau t_j&\|\eta^{j}_\theta\|^2_{H^1} + \frac{c_\sigma t_n}{2}(\|\eta^n_u\|^2_{H^1} + \|\tilde e^{n}_{\f,k}\|^2_{H^1}) + \frac{t_n}{2}\|\eta^n_\theta\|^2  \\&\leq C\sum_{j=1}^n\tau t^2_j\|\ddt \rho^j\|^2+ C_y k^{d}\xi^{2k}\sum_{j=1}^n\tau t_j\|\eta^{j}_\theta\|^2_{H^1} \notag \\&\quad+ C\sum_{j=0}^n\tau(\|\eta^{j}_u\|^2_{H^1} + \|\tilde e^{j}_{\f,k}\|^2_{H^1}+ \|\eta^{j}_\theta\|^2).\notag
	\end{align}
	Since $C_y$ now can be made arbitrarily small the term $C_y k^{d}\xi^{2k}\sum_{j=1}^n\tau t_j\|\eta^{j}_\theta\|^2_{H^1}$ can be moved to the left hand side. To estimate the last sum on the right hand side in \eqref{eq:etat_n2} we multiply \eqref{eq:eta1} by $\tau$ and sum over $n$ to get
	\begin{align}\label{eq:etasum}
	(\eta^n_\theta-\eta^0_\theta,v_2) + (\kappa \nabla \sum_{j=1}^n\tau\eta^j_\theta, \nabla v_2) + (\alpha \nabla \cdot \tilde\eta^n_u-\tilde\eta^0_u,v_2) =(-\rho^n+\rho^0, v_2),
	\end{align}
	where we note that $\eta^0_\theta=0$ and $\tilde \eta^0_u=0$. By choosing $v_1=\eta^n_u$ in \eqref{eq:eta2} and $v_2=\eta^n_\theta$ in \eqref{eq:etasum} and adding the resulting equations we get
	\begin{align*}
	& c_\sigma\|\eta^n_u\|^2_{H^1} + \|\eta^n_\theta\|^2 + (\kappa \sum_{j=1}^n\tau\nabla\eta^j_\theta, \nabla \eta^n_\theta) \\&\quad\leq \|-\rho^n+\rho^0\| \|\eta^n_\theta\| - (\sigma(\tilde e^n_{\f,k}):\varepsilon(\eta^n_u)) - (\alpha \nabla \cdot \tilde e^n_{\f,k},\eta^n_\theta). \notag\\&\quad
	\leq \|-\rho^n+\rho^0\| \|\eta^n_\theta\| + C_\text{ort}k^{d/2}\xi^k\|\tilde e^n_{\f,k}\|_{H^1}\|\eta^n_u\|_{H^1} + C_\text{co}k^{d/2}\xi^k\|\tilde e^n_{\f,k}\|_{H^1}\|\eta^n_\theta\|\notag \\&\qquad- c_\sigma\|\tilde e^n_{\f,k}\|^2_{H^1},\notag
	\end{align*}
	where we have used the almost orthogonal property \eqref{almostorthog} and Lemma~\ref{lemmacorr}. We conclude that
	\begin{align}\label{eq:etasum2}
	(c_\sigma-&\frac{C_\text{ort}k^{d/2}\xi^k}{2})\|\eta^n_u\|^2_{H^1} +(c_\sigma-\frac{(C_\text{ort}+C_\text{co})k^{d/2}\xi^k}{2})\|\tilde e^n_{\f,k}\|^2_{H^1}\\&+ (\frac{1}{2}-\frac{C_\text{co}k^{d/2}\xi^k}{2})\|\eta^n_\theta\|^2 + (\kappa \sum_{j=1}^n\tau\nabla\eta^j_\theta, \nabla \eta^n_\theta) \leq C\|-\rho^n+\rho^0\|^2,\notag 
	\end{align}
	and assumption \ref{H-bound} guarantees positive coefficients. Now, note that we have the bound
	\begin{align*}
	\bigg(\kappa \sum_{j=1}^n\tau\nabla\eta^j_\theta, \nabla \eta^n_\theta\bigg) &= \bigg(\kappa \sum_{j=1}^n\tau\nabla\eta^j_\theta, \ddt\bigg( \sum_{j=1}^n\tau \nabla \eta^j_\theta\bigg)\bigg) \\&\geq \frac{1}{2\tau} \bigg(\| \sum_{j=1}^n\tau\eta^j_\theta\|^2_\kappa-\| \sum_{j=1}^{n-1}\tau\eta^i_\theta\|^2_\kappa\bigg),
	\end{align*}
	with the convention that $\sum_{j=1}^0\tau\eta^j_\theta = 0$. Multiplying \eqref{eq:etasum2} by $\tau$, summing over $n$, and using \eqref{kappa-bounds} thus gives
	\begin{align}\label{sumetabounds}
	\sum_{j=1}^n\tau(\|\eta^j_u\|^2_{H^1} &+ \|\tilde e^j_{\f,k}\|^2_{H^1}+\|\eta^j_\theta\|^2) + \frac{c_\kappa}{2} \| \sum_{j=1}^n\tau\eta^j_\theta\|^2_{H^1}\\&\leq C\sum_{j=1}^n\tau\|-\rho^j+\rho^0\|^2\notag\\&\leq C(H+k^{d/2}\xi^k)^2\sum_{j=1}^n\tau \|\theta^0_h\|^2_{H^1}\leq C(H+k^{d/2}\xi^k)^2t_n \|\theta^0_h\|^2_{H^1}.\notag
	\end{align}
	Here we have used the Aubin-Nitsche duality argument, Lemma~\ref{locstationaryconv} and Lemma~\ref{femreg} to deduce
	\begin{align*}
	\|\rho^j\|&\leq \|\rho^j_\theta\| + C\|\rho^j_u\|_{H^1} \leq C(H+k^{d/2}\xi^k)(\|\rho^j_\theta\|_{H^1} + \|\theta^n_h\|_{H^1})\\&\leq C(H+k^{d/2}\xi^k)\|\theta^n_h\|_{H^1}\leq C(H+k^{d/2}\xi^k)\|\theta^0_h\|_{H^1}, \quad j \geq 0.\notag
	\end{align*}  
	Combining \eqref{etabounds}, \eqref{ddtrhobound}, \eqref{eq:etat_n2}, and \eqref{sumetabounds} we get
	\begin{align*}
	t_n^2\|\eta^n_\theta\|^2_{H^1} + t_n\|\eta^n_u\|^2_{H^1} + t_n\|\tilde e^n_{\f,k}\|^2_{H^1}\leq C(H+k^{d/2}\xi^k)^2t_n\|\theta^0_h\|^2_{H^1},
	\end{align*}
	which completes the proof.
\end{proof}

\begin{proof}[Proof of Theorem~\ref{gfemerror}]
	Since the problem is linear we can split the solution 
	\begin{align*}
	u^n_h=\bar u^n_h + \hat u^n_h, \quad \theta^n_h = \bar \theta^n_h + \hat \theta^n_h,
	\end{align*}
	where $\bar u^n_h$ and $\bar \theta^n_h$ solves \eqref{fem1}-\eqref{fem2} with $f=0$ and $g=0$ and $\hat u^n_h$ and $\hat \theta^n_h$ solves \eqref{fem1}-\eqref{fem2} with $\theta^0=0$. The theorem now follows by applying Lemma~\ref{gfemerror1} and Lemma~\ref{gfemerror2}.
\end{proof}

\section{Numerical examples}\label{sec:ex}
In this section we perform two numerical examples. For a discussion on how to implement the type of generalized finite element efficiently described in this paper we refer to \cite{Henning16}.

The first numerical example models a composite material which is preheated to a fix temperature and at time $t_0=0$ the piece is subject to a cool-down.

The domain is set to be the unit square $\Omega=[0,1]\times[0,1]$ and we assume that the temperature has a homogeneous Dirichlet boundary condition, that is $\Gamma^\theta_D=\partial \Omega$ and $\Gamma^\theta_N=\emptyset$. For the displacement we assume the bottom boundary to be fix and for the remaining part of the boundary we prescribe a homogeneous Neumann condition, that is $\Gamma^u_D=[0,1]\times {0}$ and $\Gamma^u_N = \partial \Omega \setminus \Gamma^u_D$. 

The composite is assumed to be built up according to Figure~\ref{material}. The white part in the figure denotes a background material and the black parts an insulated material. The black squares are of size $2^{-5}\times 2^{-5}$. We assume that the Lam\'{e} coefficients $\mu$ and $\lambda$ take the values $\mu_1$ and $\lambda_1$ on the insulated material, and $\mu_2$ and $\lambda_2$ on the background material. In this experiment we have set $\mu_1/\mu_2=10$ and $\lambda_1/\lambda_2=50$. Similarly, using subscript 1 for the insulated material and subscript 2 for the background material, we set $\alpha_1/\alpha_2=10$ and $\kappa=\kappa_i\cdot I$, for $i=1,2$, where $I$ is the $2$-dimensional identity matrix and $\kappa_1/\kappa_2=10$. Furthermore, we have chosen to set $f=[0, 0]^\intercal$ (no external body forces) and $g=-10$. 
\begin{figure}[h]
	\includegraphics[width=0.5\textwidth]{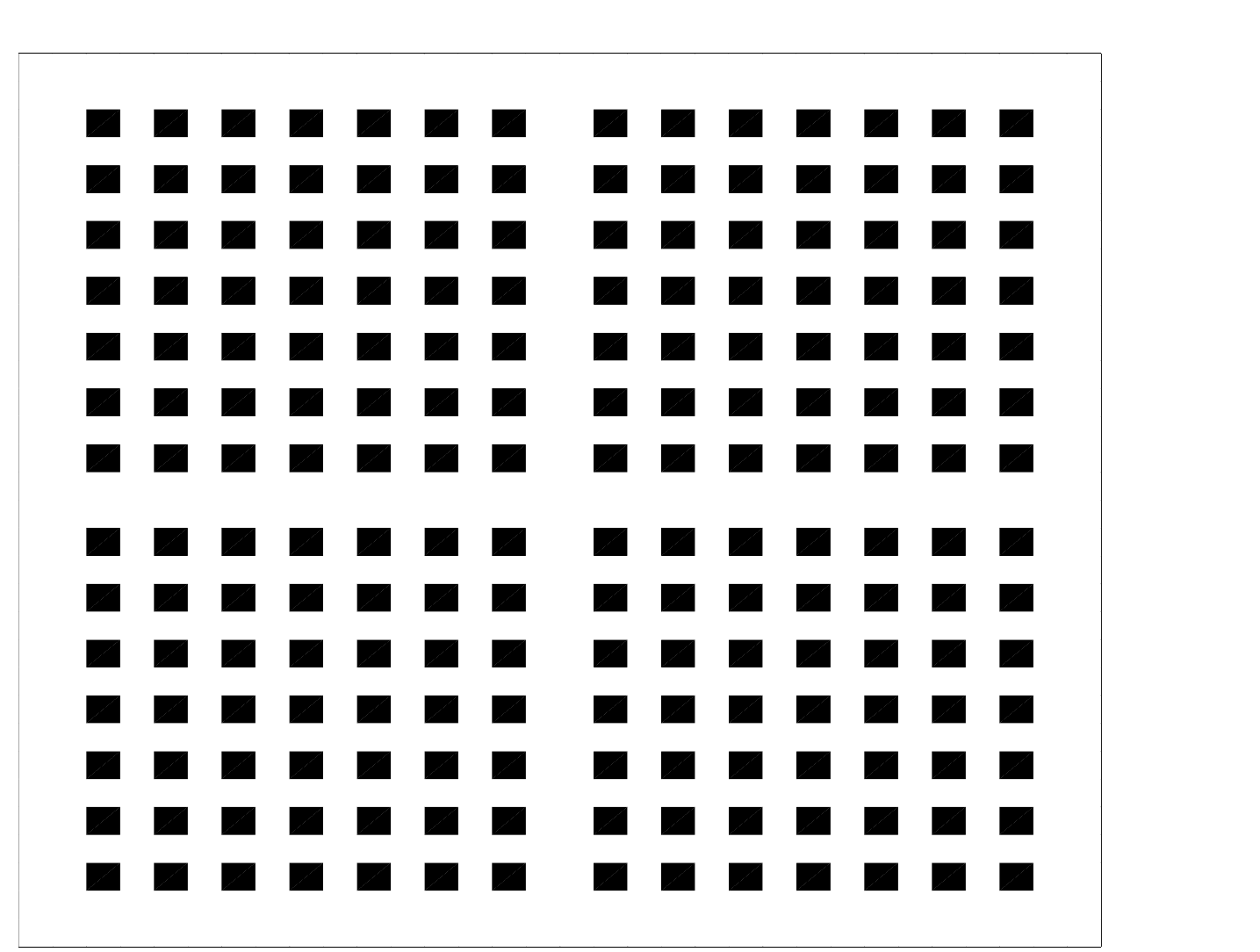}		
	\caption{Composite material on the unit square. One black square is of size $2^{-5}\times2^{-5}$.}\label{material}
\end{figure} 

The initial data must be zero on the boundary $\Gamma^\theta_D$, so we have chosen to put $\theta^0=500x(1-x)y(1-y)$ and $\theta^0_h$ to the $L_2$-projection of $\theta^0$ to $V^2_h$. For the generalized finite element solution we have chosen $\theta^0_{\ms,k}=R^2_{\ms,k}\theta^0_h$ and $\tilde u^0_{\ms,k}$ is given by \eqref{locu0ms}.

The domain is discretized using a uniform triangulation. The reference solution is computed on a mesh of $h=\sqrt{2}\cdot2^{-6}$ which resolves the fine parts (the black squares) in the material. The generalized finite element method (GFEM) in \eqref{locgfem1}-\eqref{loctimecorr} is computed for five decreasing values of the mesh size, namely, $H=\sqrt{2}\cdot2^{-1},\sqrt{2}\cdot2^{-2},...,\sqrt{2}\cdot2^{-5},$ with the patch sizes $k=1,1,2,2,3$. For comparison, we also compute the corresponding classical finite element (FEM) solution on the coarse meshes using continuous piecewise affine polynomials for both spaces (P1-P1). The solutions satisfies \eqref{fem1}-\eqref{fem2} with $h$ replaced by $H$ and are denoted $u^n_H$ and $\theta^n_H$ respectively for $n=1,...,N$. When computing these solutions we have evaluated the integrals exactly to avoid quadrature errors. 

We have chosen to set $T=1$ and $\tau=0.05$ for all values of $H$ and for the reference solution. The solutions are compared at the time point $N$.

Note that the implementation of the corrections $u^{n,K}_{\f,k}$ in \eqref{loctimecorr} given by
\begin{align*}
(\sigma(u^{n,K}_{\f,k}):\varepsilon(w_1)) - (\alpha \theta^n_{\ms,k}, \nabla\cdot w_1)_K=0, \quad \forall w_1 \in V^1_\f(\omega_k(K)),
\end{align*} 
should \textit{not} be computed explicitly at each time step. It is more efficient to compute $x^K_y$, given by
\begin{align*}
(\sigma(x^{K}_y):\varepsilon(w_1)) - (\alpha (\lambda^2_y-R^2_{\f,k}\lambda^2_y), \nabla\cdot w_1)_K=0, \quad \forall w_1 \in V^1_\f(\omega_k(K)),
\end{align*} 
where $\{(\cdot,y)\in \mathcal N:\lambda^2_y-R^2_{\f,k}\lambda^2_y\}$ is the basis for $V^2_{\ms,k}$. Now, since $\theta^n_{\ms,k}=\sum_y \beta^n_y (\lambda^2_y-R^2_{\f,k}\lambda^2_y)$, we have the identity
\begin{align*}
u^n_{\f,k} = \sum_K u^{n,K}_{\f,k} = \sum_K \sum_y \beta^n_y x^K_y.
\end{align*} 
With this approach, we only need to compute $x^K_y$ once before solving for the system \eqref{locgfem1}-\eqref{locgfem2} for $n=1,...,N$.  

The relative errors in the $H^1$-seminorm $\|\nabla \cdot\|$ are shown in Figure~\ref{conv}. The left graph shows the relative errors for the displacement, $\|\nabla(\tilde u^N_{\ms,k}-u^N_h)\|/\|\nabla u^N_h\|$ and $\|\nabla(u^N_{H}-u^N_h)\|/\|\nabla u^N_h\|$. The right graph shows the error for the temperature $\|\nabla(\theta^N_{\ms,k}-\theta^N_h)\|/\|\nabla \theta^N_h\|$ and $\|\nabla(\theta^N_{H}-\theta^N_h)\|/\|\nabla \theta^N_h\|$. As expected the generalized finite element shows convergence of optimal order and outperforms the classical finite element.   

\begin{figure}[h]
	\centering
	\begin{subfigure}[b]{0.48\textwidth}
		\includegraphics[width=\textwidth]{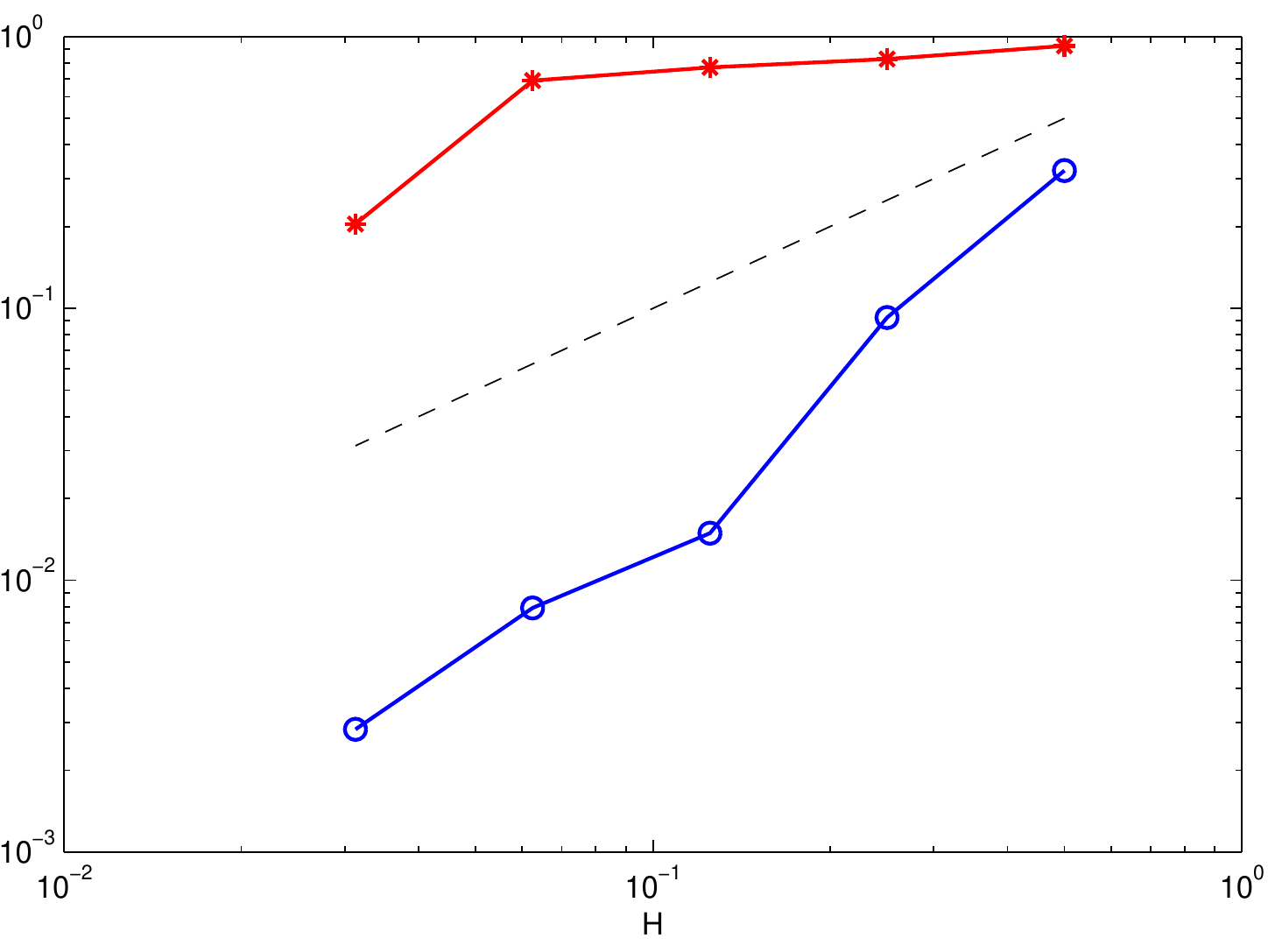}
		\caption{Displacement $u$}
	\end{subfigure}
	~
	\begin{subfigure}[b]{0.48\textwidth}
		\centering
		\includegraphics[width=\textwidth]{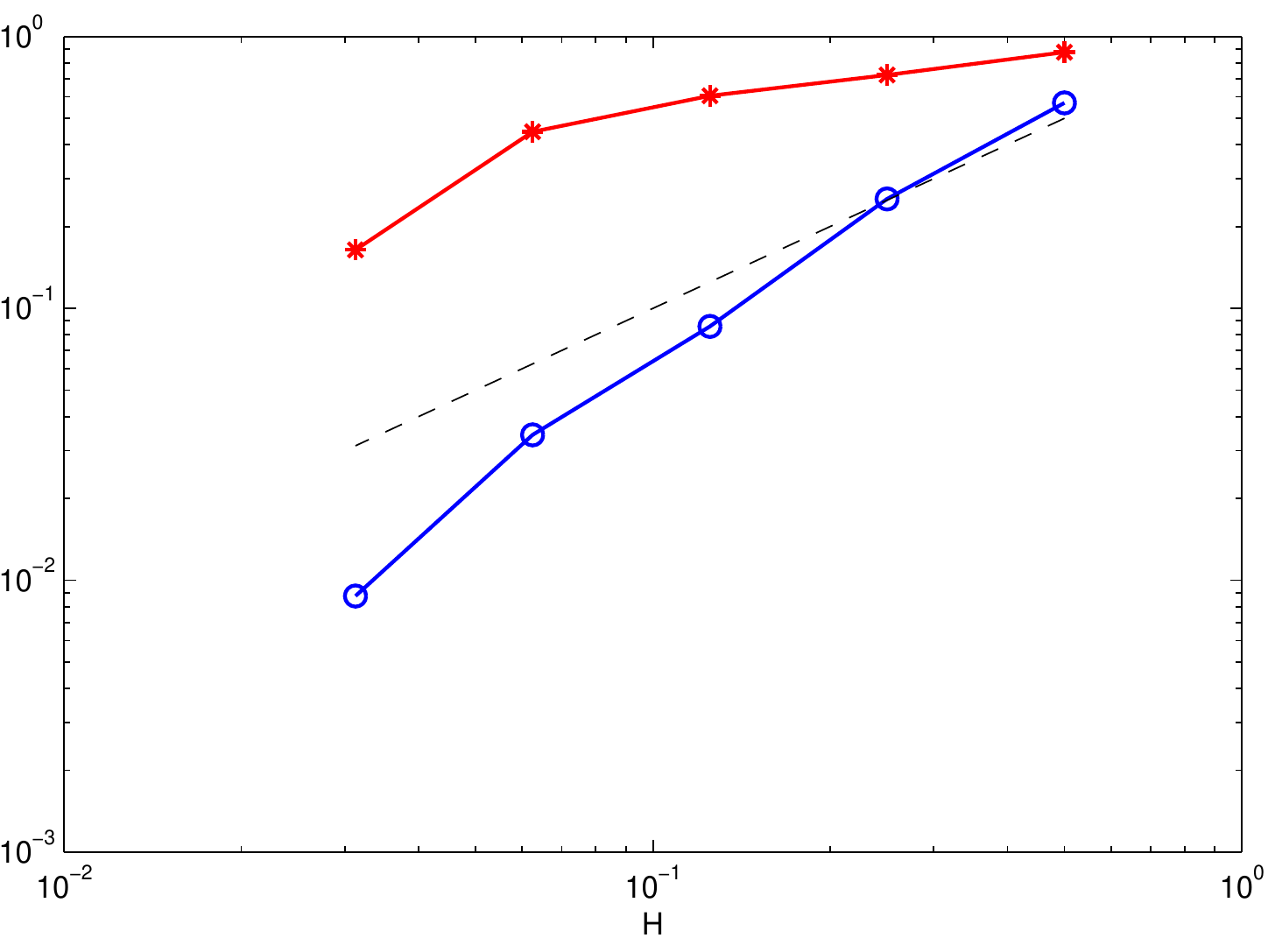}
		\caption{Temperature $\theta$} 
	\end{subfigure}
	\caption{Relative errors using GFEM (blue $\circ$) and P1-P1 FEM (red $\ast$) for the linear thermoelasticity problem plotted against the mesh size $H$. The dashed line is $H$.}\label{conv}
\end{figure} 

The second example shows the importance of the additional correction \eqref{loctimecorr}, which is designed to handle multiscale behavior in the coefficient $\alpha$. The computational domain, the spatial and the time discretization, and the patch sizes remain the same as in the first example. However, we let $\Gamma_D=\partial \Omega$ and $\Gamma_N=\emptyset$ in this case. 

To test the influence of $\alpha$ we let the other coefficients be constants, $\mu=\lambda=1$ and $\kappa=I$, where the $I$ is the $2$-dimensional identity matrix. The coefficient $\alpha$ takes values between $0.1$ and $10$ according to Figure~\ref{alpha-coeff}. The boxes are of size $2^{-5}\times 2^{-5}$ and, hence, the reference mesh of size $h=\sqrt{2}\cdot 2^{-6}$ is sufficiently small to resolve the variations in $\alpha$.  

\begin{figure}[h]
	\centering
	\includegraphics[width=0.5\textwidth]{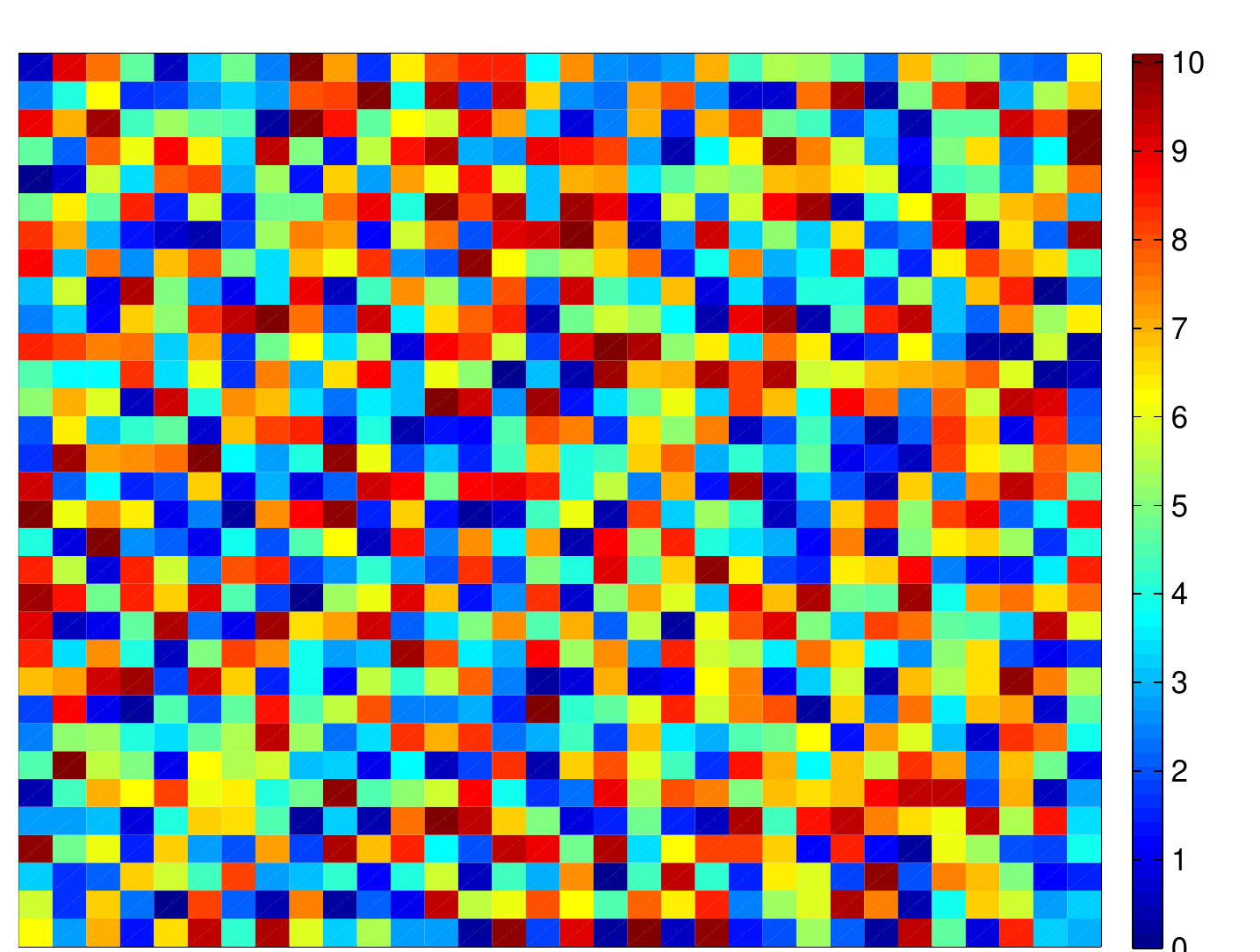}
	\caption{A plot of the coefficient $\alpha$.}\label{alpha-coeff}
\end{figure}

The initial data is set to $\theta^0=x(1-x)y(1-y)$ and $\theta^0_h$ is the $L_2$-projection of $\theta^0$ onto $V^2_h$. For the generalized finite element solution we have chosen $\theta^0_{\ms,k}=R^2_{\ms,k}\theta^0_h$ and $\tilde u^0_{\ms,k}$ is given by \eqref{locu0ms}, as in our first example. Furthermore, we have chosen to set $f=[1 \ 1]^\intercal$ and $g=10$. 

The generalized finite element method (GFEM) in \eqref{locgfem1}-\eqref{loctimecorr} is computed for the five decreasing values of the mesh size used in the first example. For comparison, we compute the generalized finite element without the additional correction on $u^n_{\ms,k}$. In this case the system \eqref{locgfem1}-\eqref{loctimecorr} simplifies to
\begin{alignat*}{2}
(\sigma(u^n_{\ms,k}):\varepsilon(v_1))- (\alpha \theta^n_{\ms,k}, \nabla \cdot v_1) &= (f^n,v_1),& \quad &\forall v_1 \in V^1_{\ms,k},\\
(\ddt\theta^n_{\ms,k},v_2) + (\kappa \nabla \theta^n_{\ms,k}, \nabla v_2)+ (\alpha \nabla \cdot \ddt u^n_{\ms,k},v_2)&= (g^n,v_2).& &\forall v_2 \in V^2_{\ms,k}
\end{alignat*}
The relative errors in the $H^1$-seminorm are shown in Figure~\ref{conv}. The graph shows the errors for the displacement with correction for $\alpha$, $\|\nabla(\tilde u^N_{\ms,k}-u^N_h)\|/\|\nabla u^N_h\|$ and the error without correction for $\alpha$ $\|\nabla(u^N_{\ms,k}-u^N_h)\|/\|\nabla u^N_h\|$. As expected the GFEM with correction for $\alpha$ shows convergence of optimal order and outperforms the GFEM without correction for $\alpha$. This is due to the fact that the constant in \eqref{locstationaryconv1} (and hence also the constant in Theorem~\ref{gfemerror}) depends on the variations in $\alpha$. 
	
\begin{figure}[h]
		\centering
		\includegraphics[width=0.5\textwidth]{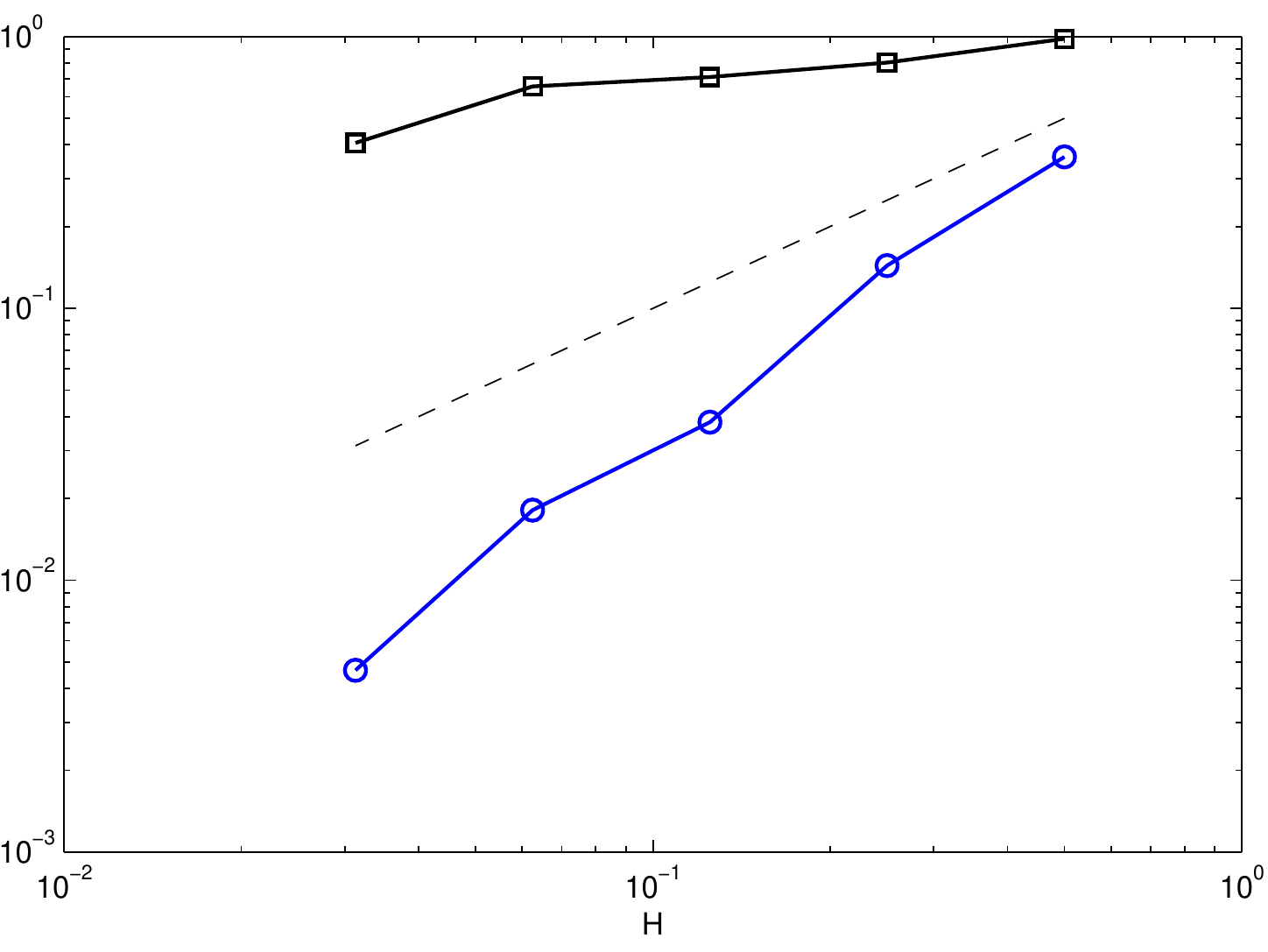}
	\caption{Relative errors for the displacement $u$ using GFEM with correction for $\alpha$ (blue $\circ$) and GFEM without correction for $\alpha$ (black $\square$) for the linear thermoelasticity problem plotted against the mesh size $H$. The dashed line is $H$.}\label{alphaconv}
\end{figure} 

\bibliographystyle{plain}
\bibliography{elasticity_ref}
\end{document}